\newcommand{\po}{\left(}
\newcommand{\pf}{\right)}
\newcommand{\co}{\left[}
\newcommand{\cf}{\right]}
\newcommand{\cco}{\llbracket}
\newcommand{\ccf}{\rrbracket}
\newcommand{\R}{\mathbb R}
\newcommand{\N}{\mathbb N} 
\newcommand{\Ent}{\mathrm{Ent}} 
\newcommand{\bG}{\mathbf{G}} 
\newcommand{\bW}{\mathbf{W}} 
\newcommand{\D}{\mathcal  D} 
\newcommand{\Q}{\mathcal  Q} 
\renewcommand{\P}{\mathcal  P} 
\renewcommand{\H}{\mathcal  H}
\newcommand{\dd}{\text{d}}
\newcommand{\na}{\nabla}
\newcommand{\1}{\mathbbm{1}}
\newcommand{\new}[1]{#1}
\newtheorem{theorem}{Theorem}
\newtheorem{proposition}{Proposition}
\newtheorem{remark}{Remark}
\newtheorem{lemma}{Lemma}
\newtheorem{corollary}{Corollary}
\newtheorem{assumption}{Assumption}
\title{An entropic approach for Hamiltonian  Monte Carlo: the idealized case}
\author{Pierre Monmarché}
\begin{document}
\maketitle

\begin{abstract}
 Quantitative long-time entropic convergence and short-time regularization are established for an idealized Hamiltonian Monte Carlo chain which alternatively follows an Hamiltonian dynamics for a fixed time and then partially or totally refresh\new{es} its velocity with an auto-regressive Gaussian step. These results, in discrete time, are the analogous of similar results for the continuous-time kinetic Langevin diffusion, and the latter can be obtained from our bounds in a suitable limit regime. The dependency in the log-Sobolev constant of the target measure is sharp and is illustrated  on a mean-field case and on a low-temperature regime, with an application to the simulated annealing algorithm. The practical unadjusted algorithm is briefly discussed.
\end{abstract}

\section{Introduction}

\subsection{Overview}

Let $\pi$ be a probability measure on $\R^d$ with density proportional to $e^{-U}$ for some $U\in\mathcal C^2(\R^d)$. The main subject of this work is the Markov chain $(X_k,V_k)_{k\in\N}$ on $\R^d \times \R^d$ whose transitions are given by the alternance of two steps: first, the chain follows for a given fixed time $t>0$ the Hamiltonian dynamics associated to the potential $U$, namely
\begin{equation}\label{eq:HD2}
(X_k',V_k') = \Phi_t \po X_k,V_k\pf 
\end{equation}
where $\R_+ \ni s\mapsto \Phi_s(x,v)=:(x_s,v_s)\in\R^{2d}$ is the solution of
\begin{equation}\label{eq:HD}
\dot x_s = v_s\qquad \dot v_s = -\na U(x_s)\qquad (x_0,v_0)=(x,v)\,.
\end{equation}
Second, an auto-regressive Gaussian \new{randomization} of the velocity is performed, namely
\begin{equation}\label{eq:damping}
X_{k+1} = X_k' \qquad V_{k+1} = \eta V_k' + \sqrt{1-\eta^2} G_k
\end{equation}
where $\eta\in[0,1)$ is a given damping parameter and $(G_k)_{k\in\N}$ is an i.i.d. sequence of standard (mean $0$, variance $I_d$) $d$-dimensional Gaussian variables. Let $\mu = \pi\otimes \mathcal N(0,I_d)$  where $\mathcal N(0,I_d)$ stands for the standard Gaussian distribution on $\R^d$. It is readily checked that $\mu$ is invariant for both the \new{randomization} and Hamiltonian steps.

  We call $(X_k,V_k)_{k\in\N}$ the idealized Hamiltonian Monte Carlo (HMC) chain.  HMC is a widely used algorithm for sampling the target distribution $\pi$. Here, \emph{idealized} refers to the fact the true Hamiltonian dynamics is performed in contrast to the practical use of HMC where it is replaced by a numerical propagator (possibly corrected by a Metropolis step). Alternatively, the Hamiltonian dynamics also appears as the limit in some regimes (high dimension \cite{Doucet} or high frequency \cite{MRZ}) of some piecewise deterministic continuous-time samplers. In this work we focus on the idealized chain (also called the \emph{exact} HMC in \cite{BouRabeeEberle}) and postpone the study of other cases to future works\footnote{\new{See \cite{Toappear}   for the unadjusted case.}}.

In the most standard case, $\eta=0$, so that $(X_k)_{k\in\N}$ alone is a Markov chain. Alternatively, if $\eta= e^{-\gamma t}$ for some fixed $\gamma$ then, as $t\rightarrow 0$,  $(X_{\lfloor s/t\rfloor},V_{\lfloor s/t\rfloor})_{s\geqslant 0}$ converges \new{pathwise} to the Langevin diffusion, which is the solution of
\begin{equation}\label{eq:kinLangevin}
\left\{\begin{array}{rcl}
\dd X_t & = & V_t \dd t \\
\dd V_t & = & -\na U(X_t)\dd t - \gamma V_t \dd t + \sqrt{2\gamma }\dd B_t\,,
\end{array}\right.
\end{equation}
where $(B_t)_{t\geqslant 0}$ is a standard $d$-dimensional Brownian motion. In other word\new{s}, for $t\ll 1$, the HMC chain can be seen as an idealized splitting scheme of the Langevin diffusion \cite{MonmarcheSplitting}. If, in contrast, $\eta=0$ then, as $t\rightarrow 0$,  \new{$X_{k+1}\simeq X_k - (t^2/2)\na U(X_k) + t G_{k-1}$ } and then $(X_{\lfloor \new{2} s/t^2\rfloor})_{s\geqslant 0}$ converges \new{pathwise} to the overdamped Langevin diffusion\footnote{In this work we use the terminology of statistical physics and molecular dynamics, namely without any specification, \emph{Langevin diffusion} refers to the kinetic/underdamped process \eqref{eq:kinLangevin}. In contrast, in Bayesian statistics, this term often refers to the overdamped process \eqref{eq:overdLangevin}. }
\begin{equation}\label{eq:overdLangevin}
\dd X_t = -\na U(X_t) \dd t + \sqrt{2}\dd B_t\,.
\end{equation}
Finally, if, at each step, the \new{randomization} step \eqref{eq:damping} is only performed with a probability $p\in(0,1)$ then, taking $p=1-e^{-\lambda t}$ for some fixed $\lambda>0$ and letting $t$ vanish, $(X_{\lfloor s/t\rfloor},V_{\lfloor s/t\rfloor})_{s\geqslant 0}$  converges to the continuous-time Randomized HMC process, which follows \eqref{eq:HD} and, at the jump times of a Poisson process of intensity $\lambda$, undergoes a \new{randomization} step \eqref{eq:damping} (usually with $\eta=0$, i.e. the velocity is fully refreshed with a new Gaussian variable). The generator of these three continuous-time processes are respectively given by 
\begin{eqnarray*}
Lf(x,v) & = & v\cdot \na_x f(x,v) - \po \na U(x) + \gamma v\pf \cdot \na_v f(x,v) + \gamma \Delta_v f(x,v)\\
Lf(x) & = & -\na U(x) \cdot \na f(x) + \Delta f(x)\\
Lf(x,v) & = & v\cdot \na_x f(x,v) - \na U(x) \cdot \na_v f(x,v) \\
 & & \ +\  \lambda \int_{\R^d} \po f\po x, \eta v+  \sqrt{1-\eta^2} w\pf - f(x,v)\pf (2\pi)^{-d/2}e^{-|w|^2/2}\dd w\,.
\end{eqnarray*}
In statistical physics, the equations satisfied by the law of these processes are respectively called the kinetic Fokker-Planck, Fokker-Planck and BGK (or linear Boltzmann) equations.

The long-time behaviour of these three continuous-time processes has been studied under various assumptions and using various techniques. The simplest of them is the overdamped Langevin process, which is an elliptic reversible diffusion process. There is a plethoric literature on this process, and in particular its long-time convergence can be established with either Lyapunov/Doeblin approaches \cite{BAKRY2008727,HairerMattingly2008}, coupling arguments \cite{EBERLE20111101}, or functional inequalities  (e.g. spectral theory, entropy methods, hypercontractivity\dots cf. \cite{BAKRY2008727,BakryGentilLedoux,Markowich99onthe}). The analysis of the Langevin process is more complicated, since it is a non-reversible non-elliptic hypoelliptic diffusion process. However, much progress has been made in this matter in the last two decades, and there is now a lot of results available for this process too. In particular, again, the question of its long-time convergence to equilibrium has been addressed via Lyapunov methods \cite{Talay,Wu2001,MATTINGLY2002185}, direct couplings \cite{EberleGuillinZimmer,MonmarcheContraction,Bolley2010} or functional inequalities (e.g. spectral theory \cite{Helffer2005}, hypoellipticity \cite{Herau2007}, modified entropies \cite{Villani2009,DMS2011}, Bakry-Émery calculus \cite{Baudoin,MonmarcheGamma}, variational methods \cite{ArmstrongMourrat}). Finally, the Randomized HMC process is neither reversible nor a diffusion process, its generator is non-local and it doesn't have the regularization property of hypoelliptic diffusion (in particular, if the initial distribution of the process has atoms then so does its law at time $s$ for all $s\geqslant 0$). However, its long-time convergence can be established via Lyapunov methods \cite{RHMC}, direct couplings  \cite{Doucet},   entropy methods \cite{CaceresCarrilloGoudon,Hrau2006HypocoercivityAE,Evans2017,M21,Doucet,ADNR} or spectral theory \cite{Robbe2014}. The sample of references and techniques given here is not meant  to be exhaustive, since the literature is huge and these continuous-time dynamics are not the topic of this work. Besides, there are many relations between the different techniques, so the way we distinguish them here is partly arbitrary.

The long-time convergence of idealized or unadjusted HMC discrete-time chains have, up to now, mainly been addressed by Lyapunov or direct coupling methods \cite{Seiler2014PositiveCA,RHMC,MangoubiSmith,BouRabeeEberle,ChenVempala,BouRabeeEberleZimmer,BouRabeeSchuh} (the analysis for Metropolis-adjusted HMC may differ, we refer to \cite{Dwivedi1,Dwivedi2} and references within). Indeed, functional inequalities methods are more natural for continuous-time diffusion processes: typically, one starts by differentiating some entropy, and then it remains to relate the entropy dissipation to the entropy itself (which usually crucially involves integration by parts and the chain rule, i.e. the local property of diffusion generators). Such methods have been applied to discrete-time Markov chains in e.g. \cite{Chatterji,Ma2,Vempala} but those are all time-discretizations of continuous-time processes (overdamped or kinetic Langevin). They follow the continuous-time computations and keep track of a continuous-time instantaneous numerical error. \new{Developing} functional inequality methods for fully discrete problems is an active research area, where the analogous of many results from the continuous-time framework are either much harder or even false, see e.g. the reviews \cite{LogSobDiscret1,LogSobDiscret2} or the recent \cite{Salez,Caputo} and references within.

 The main point of the present work is to show that it is possible to apply directly entropic methods to the discrete-time idealized HMC, without referring to any continuous-time limit process. More precisely, we will establish the analogous for the idealized HMC of the two following results for the Langevin diffusion:
 \begin{itemize}
 \item The hypocoercive modified entropy decay of \cite{Villani2009} \new{(this is Theorem~\ref{thm:dissipation_modified}) }
 \item The entropy/Wasserstein regularization of \cite{GuillinWang} \new{(this is Theorem~\ref{thm:Wasserstein/entropie}).}
 \end{itemize}
 The interests of these results with respect to previous works are the following:
\begin{itemize}
\item Long-time convergence of idealized processes can be used in combination with the analysis of the time-discretization numerical error to get non-asymptotic efficiency bounds for MCMC algorithms. When we follow this program using the numerous results available for \eqref{eq:kinLangevin} or \eqref{eq:overdLangevin}, we have to compare a diffusion process with the corresponding numerical scheme, in which case the strong error is not of the same order (in the step size $\delta$) as the weak error. For instance, \cite{DurmusULA} (resp. \cite{ChengNonconvex}) give a Wasserstein distance between the limit process and the numerical scheme of order $\sqrt{\delta}$ (resp. $\delta$) for the \new{overdamped} (resp. kinetic) Langevin diffusion. This is not the case when using the idealized HMC as a reference, since the numerical error is then only due to the deterministic Hamiltonian step, see Section~\ref{sec:unadjusted} where complexity bounds for the unadjusted HMC algorithm \new{(with splitting schemes of the Langevin diffusion as a particular case)} are established from our results on the idealized process \new{(by some aspects the bounds given in  Section~\ref{sec:unadjusted} are relatively rough; a more detailed analysis of the unadjusted algorithm is postponed to an upcoming work \cite{Toappear}). Similarly, the works \cite{Chatterji,Ma2,Vempala}, which are based on entropy methods (and deal with unadjusted chains) both consider stochastic Euler numerical schemes of \eqref{eq:kinLangevin} or \eqref{eq:overdLangevin} which are of order $1$ in the step-size. One of the advantage of Hamiltonian-based schemes being the use of second-order  splitting schemes which require only one computation of $\na U$ per time step, one of the main motivation of the present paper is to set the first step of the analysis with entropy methods of unadjusted HMC \new{ and, importantly, unadjusted splitting schemes of the Langevin process \eqref{eq:kinLangevin} (which is a reason why it is important in this work to cover the case $\eta>0$, and more specifically $\eta\rightarrow 1$ as $t\rightarrow 0$). It is not clear whether, for such splitting schemes, directly adapting the computations of \cite{Chatterji,Ma2} based on \eqref{eq:kinLangevin} as the unbiased continuous-time reference (which would be quite technical) would give the correct second-order scaling of the error, in view of the strong/weak error question mentioned above.}  }
\item \new{Another motivation to cover the case $\eta>0$ is the following. It has been recently established in \cite{MonmarcheHMCconvexe} that, at least for Gaussian target distributions, using unadjusted HMC with inertia (i.e. $\eta>0$) outperforms the classical case $\eta=0$. More precisely, denoting by $\kappa$ the condition number of the variance matrix of the target, using a damping parameter with $1-\eta$ of order $1/\sqrt{\kappa}$ gives a convergence rate of order $\sqrt{\kappa}$ instead of $\kappa$, the latter being the convergence rate obtained for the optimal choice of the integration time $t$ when $\eta=0$. It is thus interesting to obtain non-asymptotic convergence estimates beyond the case $\eta=0$. To our knowledge, such results were not available in the non-convex case. }

\item \new{Our results concern the relative entropy, and not only the $L^2$ norm as in \cite{ADNR,MRZ,CaoLuWang,LuWang} based on the approaches of either \cite{DMS2011} or \cite{ArmstrongMourrat} (in fact our results also cover the $L^2$ case, see Section~\ref{subsc:G-entropy}). This is important in view of our practical motivation since, by contrast to the $L^2$ norm, the relative entropy is   amenable to the study of numerical schemes, as in \cite{Vempala,Ma2,Chatterji} (see also \cite{Toappear}). Moreover, contrary to Wasserstein distances and relative entropy, the $L^2$ distance does not scale well with the dimension of the space (the chi-square divergence between $\nu^{\otimes d}$ and $\mu^{\otimes d}$ is exponential in $d$). This is critical for modern high-dimensional problems, as we illustrate in the mean-field case, see Section~\ref{Sec:mean-field}. 
}

\item \new{Recently, reflection coupling arguments have been applied to the  kinetic Langevin diffusion \cite{EberleGuillinZimmer,ChengNonconvex} or  HMC (either idealized or unadjusted) with $\eta=0$ \cite{BouRabeeEberle,BouRabeeEberleZimmer,BouRabeeSchuh} assuming that $U$ is strongly convex outside a compact set. This yields a convergence in the $\mathcal W_1$ distance sense, which can then be transfered to a total variation convergence using a regularization result \cite{BouRabeeEberle}. Direct coupling methods are quite robust and, in particular, contrary to functional inequality approaches, they do not rely on an explicit expression of the invariant measure, and thus unadjusted numerical schemes are treated without additional difficulty with respect to continuous-time dynamics. On the other hand, apart from the fact we deal with stronger distances (relative entropy and $\mathcal W_2$ instead of total variation and $\mathcal W_1$), an advantage of functional inequality methods is that, in some cases, they provide bounds on the convergence rate which are sharper than those obtained by explicit coupling methods. A first example is the case of log-concave (but not log-strongly-concave) target measures, in relation with the recent results on the KLS conjecture \cite{ChenKLS,Lehec,Klartag} which yield a mild dependency in the dimension (see Section~\ref{sec:logconcave}). That being said, in the present work, we have particularly in mind the multi-modal non-convex case, in which case a way to get some understanding of the convergence rate is to consider the small temperature regime, namely  $\beta \rightarrow \infty$ with a target measure $\pi \propto e^{-\beta U}$ where $U$ has several local minima. In this framework, it is known that, as $\beta\rightarrow \infty$, up to a polynomial pre-factor, the convergence rate of any of the three continuous-time processes above behaves like $e^{-\beta c_*}$ where $c_* >0$ is the so-called critical height of the \new{potential} (see e.g. \cite{Holley,M17,Robbe2014,Nier2004} and Section~\ref{sec:annealing}), which is captured by our results, while coupling methods yield a convergence rate of order $e^{-\beta C}$ for some $C$ which is usually strictly larger than $c_*$ (see Remark~\ref{rem:comparaison_avec_couplage}). More generally, contrary to functional inequalities techniques, direct coupling methods  use some sort of ``worst case" local information about the potential $U$ (typically a bound on $(x-y)\cdot (\na U(x)-\na U(y))$ uniformly over $x$ and $y$ in some ball) which, except in particularly simple cases, is often not sufficient to convey a good non-local information about the geometry of the potential (such as the critical height).  We illustrate the interest of our sharp rates with  the theoretical analysis of the simulated annealing algorithm in Section~\ref{sec:annealing}. }
\end{itemize} 
Moreover, since the idealized HMC chain can be made to converge to different continuous-time processes depending on the choice of the parameters, our results shed some new lights on past works. In particular, we will keep in mind that our estimates should not degenerate when passing to the limit $t\rightarrow 0 $ in the regimes leading either to \eqref{eq:kinLangevin} or \eqref{eq:overdLangevin}.

The rest of this work is organized as follows. In the rest of this introduction, we introduce logarithmic Sobolev inequalities in Section~\ref{sec:logSob} and our assumptions in Section~\ref{sec:rescaling}. The main results are presented in Section~\ref{sec:MainResults}, in particular Theorem~\ref{thm:dissipation_modified} for the long-time convergence and Theorem~\ref{thm:Wasserstein/entropie} for the short-time regularization. Section~\ref{sec:prelimHamilton} gathers some preliminary results on the Hamiltonian dynamics, which are then used in Section~\ref{sec:proof} where the main results are proven. Finally, some examples are discussed in Section~\ref{sec:examples}, more precisely log-concave target measures in Section~\ref{sec:logconcave}, the mean-field scaling and its limit non-linear evolution in Section~\ref{Sec:mean-field}, the low-temperature regime and its application to the analysis of the simulated annealing algorithm in Section~\ref{sec:annealing}, and finally a complexity bound of the unadjusted HMC algorithm in Section~\ref{sec:unadjusted}.

\subsubsection*{Notations}

For $z\in \R^d$ and $A$ a matrix we write $|z|$ the Euclidean norm and $|A|$ the associated operator norm, and $\|A\|_F = (\sum_{ij}A_{ij}^2)^{1/2}$ the Frobenius norm. 
 For $\Phi\in\mathcal C^1(\R^n,\R^m)$,  we use the notation and convention $\na \Phi = (\partial_{z_i} \Phi_j)_{i\in\cco 1,n\ccf,j\in\cco 1,m\ccf}$ ($i$ stands for the \new{row} and $j$ the column) for the Jacobian matrix  of $\Phi$. This is the convention which ensures that $\na (\Psi\circ\Phi) =\na \Phi \na \Psi\circ \Phi$ for $\Psi\in \mathcal C^1(\R^{m},\R^d)$ and which, in the case $m=1$, is such that the Jacobian matrix of $\Phi$ is also its gradient.  If $\Phi(z)=Az$ with $A$ a constant matrix then $\na\Phi(z) = A^T $, where $A^T$ stands for the transpose of the matrix $A$.

\subsection{Relative entropy and log Sobolev inequality}\label{sec:logSob}

The relative entropy    of a law $\nu$ on $\R^{2d}$ with respect to a law $\mu$ is given by
\[\Ent(\nu|\mu) \ = \ \left\{\begin{array}{ll}
\int_{\R^{2d}} \ln\po \frac{\dd \nu}{\dd \mu}\pf \dd \nu & \text{if } \nu \ll \mu\\
+ \infty & \text{otherwise.}
\end{array}\right.\]
A related quantity is the Fisher Information
\[\mathcal  I(\nu|\mu) \ = \ \left\{\begin{array}{ll}
4 \int_{\R^{2d}} \left|\na \sqrt{\frac{\dd \nu}{\dd \mu}}\right|^2 \dd \nu & \text{if } \nu \ll \mu\\
+ \infty & \text{otherwise,}
\end{array}\right.\]
where for a measurable function $f$ on $\R^{2d}$, $|\na f|$ is defined as
\[|\na f|(z) \ = \ \lim_{r\downarrow 0}\ \sup\left\{ \frac{|f(z)-f(y)|}{|z-y|},\ y\in\R^{2d},\ 0<|y-z|\leqslant r\right\}\,.\]
Of course this definition is consistent with the norm of the gradient when $f$ is smooth.

The Pinskers' inequality states that, for all probability distributions $\nu,\mu$,
\[\|\nu-\mu\|_{TV}^2 \leqslant 2\Ent(\nu|\mu)\,,\]
 where $\|\cdot\|_{TV}$ stands for the total variation norm.

 The measure $\mu$ is said to satisfy a log-Sobolev inequality with constant $C_{LS}>0$ (which will often be shorten as \emph{$\mu$ satisfies a LSI($C_{LS}$)})  if
\begin{equation}\label{eq:log-Sob}
\forall \nu \ll \mu,\qquad \Ent(\nu|\mu) \ \leqslant \ C_{LS} \mathcal I(\nu|\mu)\,.
\end{equation}
If $\mu$ satisfies such an inequality then, as proven in \cite{OttoVillani}, it satisfies a $T_2$ Talagrand inequality with constant $C_{LS}$, which reads
\[\forall \nu \ll \mu\,,\qquad \mathcal W_2^2\po \nu,\mu \pf \leqslant C_{LS} \Ent(\nu|\mu)\,,\]
where $\mathcal W_2$ is the $L^2$ Wasserstein distance, defined by
\[\mathcal W_2^2\po \nu,\mu\pf = \inf_{r\in\mathcal C(\nu,\mu)} \int_{\R^{2d}\times \R^{2d}} |z-z'|^2 r(\dd z,\dd z')\] 
with $\mathcal C(\nu,\mu)$ the set of probability measures on $\R^{2d}\times\R^{2d}$ with marginals $\nu$ and $\mu$.

%

Since the standard Gaussian law  satisfies a log-Sobolev inequality with constant $1$ and such inequalities tensorises (see \cite{BakryGentilLedoux}), if we assume that $\pi$ satisfies a log-Sobolev \new{inequality} with constant $C_{LS}$, then $\mu = \pi\otimes\mathcal N(0,I_d)$ satisfies a log-Sobolev inequality with constant $\max(C_{LS},1)$, and more precisely, for all smooth positive $h$ with $\int_{\R^{2d}} h \dd \mu=1$,
\begin{equation}\label{eq:logSob_h_bar}
\int_{\R^{2d}} h \ln  h \dd \mu \leqslant \int_{\R^{2d}} \frac{C_{LS}|\na_x  h|^2 +|\na_v  h|^2}{ h} \dd  \mu\,.
\end{equation}

\subsection{Main assumption and rescaling}\label{sec:rescaling}

In this whole work,  we assume the following basic condition on the target measure and the integration time $t$ \new{(which is a fixed parameter throughout this work)}:
\begin{assumption}\label{hyp:main}
The target distribution $\pi$ has a density proportional to $\exp(-U)$ where $U\in\mathcal C^2(\R^d)$.  Moreover, there exists $L>0$ such that $|\na^2 U(x)|\leqslant L$ for all $x\in\R^d$, and $t\sqrt{L} \leqslant 1/4$.
\end{assumption}

\begin{remark}\label{rem:assu1}
The condition that $t$ is small enough is consistent with the usual restriction for HMC and is necessary for all the results stated in this work, since periodic Hamiltonian trajectories have to be avoided, see e.g. \cite{MonmarcheHMCconvexe}. The sharp condition  in the Gaussian case is $t \sqrt{L} < \pi$ \new{(here, contrary to most of the rest of the work, $\pi=3.14\dots$ does not stand for the target distribution)}. 
  Besides, we are particularly interested in the Langevin case, for which $t \sqrt{L} \ll 1$.
\end{remark}

Under Assumption~\ref{hyp:main}, let $(X_k,V_k)_{k\in\N}$ be an idealized HMC chain associated to a potential $U$, a time $t$ and a damping parameter $\eta$. Then $(\sqrt{L} X_k,V_k)_{k\in\N}$ is an idealized HMC chain associated to the potential $\tilde U(y)=U(y/\sqrt{L})$, the time $t\sqrt{L}$ and the damping parameter $\eta$. Indeed, if $(y_s,v_s)_{s\geqslant 0}$ follows the Hamiltonian dynamics $(\dot y,\dot v)=(v,-\na U(y))$, then $(u_s,w_s):=(\sqrt{L}y_{s/\sqrt{L}},w_{s/\sqrt{L}})$ solves $(\dot u,\dot w) =(w,-\na\tilde U(u))$. If the target measure $\pi\propto e^{-U}$ satisfies a LSI($C_{LS}$) then $\tilde \pi \propto e^{-\tilde U}$ (its image by the multiplication by $\sqrt{L}$) satisfies a LSI($LC_{LS}$). Moreover, $\na\tilde U$ is $1$-Lipschitz.

As a consequence, it is natural to work with the rescaled Wasserstein distance 
\[\mathcal W_{2,L}^2\po \nu,\mu\pf = \inf_{r\in\mathcal C(\nu,\mu)} \int_{\R^{2d}\times \R^{2d}} \po |x-x'|^2+\frac1{L}|v-v'|^2 \pf r(\dd x\dd v\new{,}\dd x'\dd v')\,,\]
so that
\[\mathcal W_{2,L}^2 (\mathrm{Law}(X_k,V_k),\mu) = \frac1L \mathcal W_{2}^2 (\mathrm{Law}(\sqrt {L} X_k,V_k),\tilde \mu)\]
with $\tilde \mu = \tilde \pi \otimes \mathcal N(0,I_d)$ and, similarly, the rescaled  Fisher Information
\[\mathcal I_{L}(\nu|\mu) = \int_{\R^{2d}} \frac{|\na_x h|^2 + \frac1L |\na_v h|^2}{h} \dd \mu \,. \]
The entropy is invariant by scaling, as can be seen by a change of variable:
 \[\Ent\po \mathrm{Law}(\sqrt{L} X_k,V_k)|\tilde\mu \pf  = \Ent\po \mathrm{Law}( X_k,V_k)|\mu \pf\,. \] 
Besides, recall that we are only interested in the law of the position $x_n$, and then
 \[\mathcal W_2\po \mathrm{Law}(X_k),\pi\pf \leqslant \mathcal W_{2,L}\po \mathrm{Law}(X_k,V_k),\mu\pf \]
 and, for the initial condition, we can start with a velocity at equilibrium and independent from the initial condition, in which case
 \[\mathcal W_{2,L}\po \mathrm{Law}(X_0,V_0),\mu\pf = \mathcal W_2\po \mathrm{Law}(X_0),\pi\pf  \,.\]
 The same goes for the Fisher information.  

The interest of working with these scaled quantities is that, in all the statements and proofs, without loss of generality, we can assume that $L=1$. Then, at the end, to get the results on the initial chain from the rescaled chain, one simply has to replace $t$ by $t\sqrt{L}$, $C_{LS}$ by $L C_{LS}$, $\mathcal W_2$ by $L\mathcal W_{2,L}$ and $\mathcal I$ by $L \mathcal I_L$.

\section{Main results}\label{sec:MainResults}

For fixed $t>0,\eta\in[0,1)$, consider the Markov transition operators $\D_\eta$ and $\H_t$ given by
\[\H_t f(z) = f\po \Phi_t(z)\pf \qquad\text{and}\qquad   \D_\eta f(x,v) = \mathbb E \po f  \po x, \eta v + \sqrt{1-\eta^2}G\pf\pf\,, \]
for all bounded measurable functions $f$, where $G$ is a standard $d$-dimensional Gaussian variable. We skip the subscript and simply write $\H$ and $\D$ when there is no ambiguity on the parameters. With these notations, we define the idealized HMC as the Markov chain on $\R^{2d}$ with  transition operator
  \[\P = \D  \H \,.\]
 Moreover, we set
  \[\gamma = \frac{1-\eta}{t }\,,\]
  which measures the strength of the damping. When we use the rescaling of Section~\ref{sec:rescaling}, $\gamma$ has to be replaced by $\gamma /\sqrt{L}$ in the result. 
  
  \subsection{Modified entropy dissipation}\label{subsec:convergence}

Our first result is an analogous of the hypocoercive entropy decay  of Villani \cite{Villani2009} for the Langevin diffusion. Consider a modified entropy of the form
\begin{equation}\label{eq:modifiedEntropie}
\mathcal L\po \nu \pf \ = \ \Ent(\nu|\mu) + a\int_{\R^{2d}} \frac{|\sqrt{L} \na_x h + \na_v h|^2 }{h}\dd \mu 
\end{equation}
with $h=\dd \nu/\dd \mu$, for some parameter $a>0$. 

\begin{theorem}\label{thm:dissipation_modified}
Under Assumption~\ref{hyp:main} with $L=1$, let 
\[
m_1  = 2t - \frac{21}{10} t^2  \,,\qquad
m_2 =  1 + \eta\po - 1 +  2 t + \frac{13}{5} t^2\pf\,,\]
\[ m_3 =  1- \eta^2\po    1 +2t + \frac{31}{10} t^2\pf + \frac{1-\eta^2}{2a}\,,
\]
and
\begin{equation}\label{eq:rho}
\rho  = (1-3t) \co \frac{m_1+m_3 \eta^{-2}}{2} - \sqrt{m_2^2 \eta^{-2} + \po \frac{m_1-m_3 \eta^{-2}}{2}\pf^2} \cf
\end{equation}
if $\eta>0$ or its limit $\rho = (1-3t)\co m_1 - 2a/(2a+1)\cf$ if $\eta =0$. Assume that $a$ is small enough so that $m_1m_3 > m_2^2$, which is equivalent to $\rho>0$, and that $\pi$ satisfies a LSI($C_{LS}$). Then, for all $\nu\in\mathcal P(\R^{2d})$,
\begin{equation}\label{eq:LnuP}
\mathcal L(\nu \P) \leqslant  \po 1+ \frac{\rho }{\max(C_{LS},1)/a +2 }\pf^{-1}  \mathcal L(\nu)   \,.
\end{equation}
In particular, if we take $a \leqslant \gamma / [14 + 8(\gamma+3)^2]$, then \eqref{eq:LnuP} holds with
$\rho \geqslant 3t/8$.
\end{theorem}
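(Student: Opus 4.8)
The plan is to follow the discrete-time analogue of Villani's hypocoercivity scheme, tracking the four quadratic quantities that appear in the modified entropy $\mathcal L$: the relative entropy $\Ent(\nu|\mu)$ itself and the three ``Fisher-type'' bilinear forms built from $\na_x h$ and $\na_v h$. Writing $h = \dd\nu/\dd\mu$ and $h' = \dd(\nu\P)/\dd\mu$, I would first establish exact (or one-sided) identities for how $\Ent$, $\int |\na_x h|^2/h\,\dd\mu$, $\int \na_x h\cdot\na_v h/h\,\dd\mu$ and $\int |\na_v h|^2/h\,\dd\mu$ transform under each of the two elementary operators $\D = \D_\eta$ and $\H = \H_t$ separately, then compose. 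For the damping step $\D_\eta$, which is a (partial) Ornstein--Uhlenbeck/Mehler convolution in the velocity variable only, these are the standard Gaussian semigroup computations: $\D$ contracts the $v$-gradient by $\eta$ and produces a strict entropy (Fisher information) dissipation in the $v$-direction by the Gaussian log-Sobolev inequality; the $x$-gradient is essentially untouched because $\D$ acts only on $v$. This is where the terms $1-\eta^2$ and the $\tfrac{1-\eta^2}{2a}$ correction in $m_3$ come from. For the Hamiltonian step $\H_t$, which is a measure-preserving deterministic flow, $\Ent$ is exactly invariant, and the gradients are transported by (the transpose of) the Jacobian $\na\Phi_t$; so I would need the bounds on $\na\Phi_t$ and its second derivatives from the preliminary Section~\ref{sec:prelimHamilton}, expanded to second order in $t$ under the assumption $t\sqrt L\le 1/4$ (with $L=1$). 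Matching the flow $\dot x = v$, $\dot v = -\na U(x)$ against a Gronwall-type estimate with $|\na^2 U|\le 1$ is precisely what produces the polynomials $1+2t+\tfrac{13}{5}t^2$, $1+2t+\tfrac{31}{10}t^2$ and $2t-\tfrac{21}{10}t^2$ and the prefactor $1-3t$.

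Assembling these, one obtains a single-step bound of the schematic form: letting $\Q(\nu)$ denote the $3$-vector of quadratic forms $(\int|\na_x h|^2/h,\ \int\na_x h\cdot\na_v h/h,\ \int|\na_v h|^2/h)\,\dd\mu$ evaluated at $\nu$ (this is the correct object because the cross term in $\mathcal L$ expands as $L\int|\na_x h|^2/h + 2\sqrt L\int\na_x h\cdot\na_v h/h + \int|\na_v h|^2/h$), there is a $3\times 3$ matrix $M$, depending on $t,\eta,a$, with $\Q(\nu\P)^T\,(\,\cdot\,)\,\preceq \Q(\nu)^T (M)\,(\,\cdot\,)$ in the appropriate sense, together with the entropy identity $\Ent(\nu\P|\mu) = \Ent(\nu|\mu) - (\text{Fisher dissipation from }\D)$. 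The quantity $\rho$ in \eqref{eq:rho} is then exactly the spectral gap of the relevant quadratic form: the expression under the square root is the discriminant of a $2\times 2$ symmetric matrix with entries $m_1$, $m_2\eta^{-1}$, $m_3\eta^{-2}$, and the condition $m_1 m_3 > m_2^2$ is precisely positive-definiteness, i.e. $\rho>0$. Once the modified Fisher information $a\,\Q$ is shown to decay by the factor built from $\rho$ while also controlling, via the Gaussian LSI dissipation produced by $\D$, a piece of the entropy, the full log-Sobolev inequality \eqref{eq:logSob_h_bar} for $\mu$ (with constant $\max(C_{LS},1)$, here written with $\na_x$ weighted by $C_{LS}$) closes the argument: it lets us absorb $\Ent$ into $a\,\Q$ up to the factor $\max(C_{LS},1)/a + 2$, yielding the contraction rate $(1+\rho/(\max(C_{LS},1)/a+2))^{-1}$ stated in \eqref{eq:LnuP}.

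Finally, for the explicit choice $a \le \gamma/[14 + 8(\gamma+3)^2]$ with $\gamma = (1-\eta)/t$, I would simply plug this $a$ into the formula for $\rho$ and verify $\rho \ge 3t/8$ by elementary (if somewhat tedious) estimates, using $t\sqrt L\le 1/4$ to control all the $t^2$ remainders. The key point is that, as $\eta\to 1$ with $\gamma$ fixed (the Langevin regime $t\to 0$), the leading behaviour $m_1\approx 2t$, $m_2\approx (1-\eta)(\text{stuff})\approx \gamma t^2$, $m_3\approx (1-\eta^2) + \tfrac{1-\eta^2}{2a}\approx 2\gamma t(1 + 1/(2a))$ must conspire so that $\rho$ stays of order $t$ (not degenerating), which is why $a$ is chosen proportional to $\gamma$; the expression $14 + 8(\gamma+3)^2$ is the margin needed to keep $m_1 m_3 - m_2^2$ bounded below by a definite multiple of $t^2$ uniformly.

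The main obstacle I anticipate is the Hamiltonian step: getting the second-order-in-$t$ expansion of $\na\Phi_t$ and of the three transported quadratic forms with explicit, honest constants (the $\tfrac{21}{10}$, $\tfrac{13}{5}$, $\tfrac{31}{10}$, $1-3t$), rather than soft $O(t^2)$ bounds, so that the resulting $M$ is sharp enough that $\rho$ does not collapse in the Langevin limit. The damping step and the final LSI closure are comparatively routine; the delicate bookkeeping is entirely in controlling the deterministic flow's action on gradients, which is why the preliminary section on the Hamiltonian dynamics is doing the heavy lifting.
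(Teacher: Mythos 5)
Your proposal follows essentially the same route as the paper: decompose $\P=\D\H$, use the exact entropy invariance of the Hamiltonian step and the Ornstein--Uhlenbeck dissipation of the damping step, transport the mixed Fisher term through the Jacobian estimates of Section~\ref{sec:prelimHamilton}, identify $\rho$ as the smallest eigenvalue of the $2\times 2$ form with entries $m_1$, $m_2\eta^{-1}$, $m_3\eta^{-2}$ (with $m_1m_3>m_2^2$ as positive definiteness), and close with the log-Sobolev inequality \eqref{eq:logSob_h_bar} to get the factor $\max(C_{LS},1)/a+2$. The only differences are cosmetic bookkeeping (you track the three quadratic forms separately, while the paper keeps the single mixed-gradient term and bounds a pointwise matrix $S$ built from the blocks of $J_t$, with the $(1-3t)$ factor coming from converting the dissipation into the Fisher information of $\nu\P$), so your outline is consistent with the paper's proof.
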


The proof is done in Section~\ref{sec:proof-hypoco}. Let us comment this result.

\begin{itemize}
\item The assumption that $\|\na^2 U\|_\infty <\infty$ implies that $U$ grows at most quadratically at infinity. On the other hand, a LSI implies that $U$ grows at least quadratically, see \cite[Theorem 3.1.21]{Royer}. Hence, our result concerns target measures with Gaussian tails. \new{Moreover, as mentioned in Remark~\ref{rem:assu1}, and by contrast with the kinetic Langevin process (see e.g. \cite[Theorem 35]{Villani2009} or \cite{CaoLuWang}) or the continuous-time Randomized HMC (see \cite{LuWang}) the fact that $\na^2 U$ is bounded and then the fact that $t$ is sufficiently small with respect to $\|\na^2 U\|_\infty$ is necessary to get the decay of the entropy. More precisely, if $t \sqrt{L} \geqslant \pi$, there exists $U\in\mathcal C^2(\R^d)$ with $\|\na^2 U\|_\infty \leqslant L$ and such that, for any $\eta\in[0,1)$, any idealized HMC chain $(X_n,V_n)_{n\in\N}$ with the corresponding parameters is such that $X_n=X_0$ for all $n\in\N$ (which clearly prevents  Theorem~\ref{thm:dissipation_modified} to hold). Such a counter-example is for instance given by $U(x) = \pi^2|x|^2/(2t^2)$, see \cite{MonmarcheHMCconvexe} for more details.  }
 
\item For clarity, let us  focus on the bound on the  contraction rate stated in the last part of Theorem~\ref{thm:dissipation_modified}, namely $3t/[8 \max(C_{LS},1)/a+16]$ with $a = \gamma / [14 + 8(\gamma+3)^2]$.
It scales as $\gamma$ as $\gamma$ vanishes and as $1/\gamma$ as $\gamma\rightarrow \infty$, which is sharp, as in the case of the continuous time Langevin diffusion. For fixed $t,L,\gamma$, it scales as $1/C_{LS}$ when $C_{LS}\rightarrow \infty$. It is of order $t$ (for a fixed $\gamma$), which is expected. In particular, letting $t$ vanish in the bound on $\mathcal L(\nu P^n)$ given by Theorem~\ref{thm:dissipation_modified} with $n=\lfloor s/t\rfloor $ for some fixed $s>0$ with a fixed $\gamma$ we recover 
\begin{equation}\label{eq:Ts}
\mathcal L(\nu T_s) \leqslant \exp\po - \frac{ 3s   }{8\max(C_{LS},1)/a +16 }\pf \mathcal L(\nu) 
\end{equation}
where $(T_s)_{s\geqslant 0}$ is the semi-group of the Langevin diffusion. Alternatively, in the case $\eta=0$, as $t\rightarrow 0$, taking $a=t/2$, $\rho = (1-3t)\co m_1 - 2a/(2a+1)\cf \simeq t$. Since, in that case, $a$ vanishes with $t$, applying Theorem~\ref{thm:dissipation_modified} (without assuming that $L=1$ in Assumption~\ref{hyp:main}) with $n=\lfloor \new{2} s/t^2\rfloor $ iterations for some fixed $s>0$, we recover
\begin{equation}\label{eq:Rs}
\Ent(\nu R_s|\new{\pi}) \leqslant \exp\po - \frac{ L s}{ \max(LC_{LS},1) } \pf \Ent(\nu |\new{\pi})\,,
\end{equation}
where $(R_s)_{s\geqslant 0}$ is the semi-group of the overdamped Langevin diffusion.  Since $L$ is only assumed to be an upper bound of $\|\na^2 U\|_\infty$ it can be taken arbitrarily large and we end up with
\[\forall s \geqslant 0,\ \forall \nu\in\mathcal P(\R^d),\quad \Ent(\nu R_s|\pi) \leqslant \exp\po - \frac{ s}{C_{LS} } \pf \Ent(\nu |\pi)\,,\]
which, from \cite[Theorem 5.2.1]{BakryGentilLedoux} is \emph{equivalent} to the fact that $\pi$ satisfies a LSI($C_{LS}$), which means \new{that, provided the other assumptions, the conclusion of Theorem~\ref{thm:dissipation_modified} is equivalent to the LSI (in other words, we obtain the exact rate in this regime).} Also the fact that $L$ does not intervene is consistent with the fact that no bound on $\na^2 U$ is required in the continuous-time overdamped case.

\item
For a fixed $t$, the $\gamma$ which maximizes our bound only depends on $L$, and in particular in the strongly convex case where  $\na^2 U \geqslant m$ for some $m>0$ (in which case $\pi$ satisfies LSI($1/m$)) we do not recover results similar to  \cite{CaoLuWang} for the continuous-time Langevin  diffusion \new{(for the $L^2$ norm) or \cite{MonmarcheHMCconvexe} for the unadjusted HMC with Gaussian targets (for the Wasserstein distances), } where the optimal $\gamma$ is of order $\sqrt{m}$ and yields a contraction of order $\sqrt{m}$ instead of $m$ for small $m$ (in the spirit of Nesterov acceleration for convex optimization). \new{In the Gaussian case, in fact, the proof of Theorem~\ref{thm:dissipation_modified} is easily adapted to follow the analysis of \cite{MonmarcheHMCconvexe} and yield a similar acceleration in this case (but now in terms of relative entropy), see Section~\ref{subsec:convex_result}.  Concerning the work of Cao, Lu and Wang, the main point of the result of \cite{CaoLuWang} (which leads to the acceleration) is that the convergence rate does not involve an upper bound on $\na^2 U$ (in fact, the result of \cite{CaoLuWang} applies to potentials with unbounded Hessian matrices) and, due to the periodicity issue mentioned above, it is clear that such a result cannot hold for the chain studied in the present work. It is unclear how the results of \cite{CaoLuWang} (or \cite{LuWang} by the same authors but for the continuous-time Randomized HMC process) could be used for some numerical schemes, and a related question is whether it could be adapted to the relative entropy.  Notice that  \cite{CaoLuWang,LuWang} relies on the $L^2$ hypocoercivity method of \cite{ArmstrongMourrat}, however, as already mentioned, with his initial modified norm method (that inspires the proof of Theorem~\ref{thm:dissipation_modified}), Villani was already able to get the $L^2$ convergence for unbounded Hessian matrices in \cite[Theorem 35]{Villani2009} (so it may be possible to get the $\sqrt{m}$ scaling of \cite{CaoLuWang} in the convex case with this approach, although this is not entirely clear), but not for the relative entropy. Both approaches of \cite{ArmstrongMourrat} and \cite{Villani2009} in $L^2$ rely on Hilbert analysis. To our knowledge, the only result for hypocoercivity in relative entropy with unbounded Hessian matrices has been obtained in \cite{Cattiaux} and, as the analysis is more involved, it is unclear whether it would be possible to get sharp rates with this approach in the convex case. Finally, let us notice that it is not always clear that results in continuous-time can be transfered to numerical schemes. For instance, at the continuous-time level it is possible to add a divergence-free drift to the overdamped Langevin diffusion to get a non-reversible diffusion with an arbitrarily large convergence rate to equilibrium, but then the numerical schemes  are sensitive to the Lipschitz constant of the drift, see \cite{Constantin,2023arXiv230318168C} and references within in this topic.}
\item In practice, the computational cost of one iteration of the chain is overwhelmed by the simulation of the Hamiltonian dynamics, and is thus proportional to $t$. It means that the contraction  rate per computational time (i.e. the contraction rate divided by $t$) provided by Theorem~\ref{thm:dissipation_modified} is bounded uniformly in $t\in(0,\sqrt{L}/4]$ \new{(when $\gamma$ is fixed)}. The good side is that it means any choice of $t$ in this range gives a reasonable algorithm. The bad side is that our result is not accurate enough to compare different values of $t$, and in particular we cannot say if, for instance for the unadjusted HMC, it is better to use the classical HMC scaling (i.e. the integration time $t$ is fixed, of order $\sqrt{L}$, independent from the step size of the Verlet integrator) or the Langevin scaling (i.e. $t$ is exactly the step size of the Verlet integrator). On this topic, see the discussion of \cite{MonmarcheHMCconvexe} in the Gaussian case. \new{However, when $t$ is small, what does appear in Theorem~\ref{thm:dissipation_modified} is the difference between the ballistic scaling (namely $\gamma$ fixed; the position covers a distance of order $1$ in a number of iterations of order $1/t$) with respect to the diffusive scaling (namely $\eta=0$; the position covers a distance of order $1$ in a number of iterations of order $1/t^2$). Indeed, when $\gamma$ is fixed, so is $a= \gamma / [14 + 8(\gamma+3)^2] $, so that the convergence rate $\rho$ is of order $t$ (which is why we got \eqref{eq:Ts}) while, when $\eta=0$, $a$ is of order $t$ and then $\rho$ is of order $t^2$ (which is why we got \eqref{eq:Rs}).    }
\item \new{In relation to the two previous points, let us mention that a square-root acceleration of the convergence rate (i.e. going from an optimal convergence rate $\rho$ for HMC with $\eta=0$ to an optimal convergence rate $\sqrt{\rho}$ for HMC with $\eta>0$ or the Langevin diffusion) cannot be expected in a general non-convex case simply by using kinetic processes such as the underdamped Langevin diffusion. Indeed, as already mentioned, in the low temperature regime $\beta\rightarrow +\infty$ with a target $e^{-\beta U}$ where $U$ has several local minima, up to some sub-exponential prefactor in $\beta$, the sharp convergence rate of the kinetic and overdamped Langevin diffusion and of   the continuous-time Randomized HMC  are all of the order $e^{-\beta c_*}$ with the same $c_*>0$ \cite{Holley,M17,Robbe2014,Nier2004} (the sub-exponential prefactor are slightly different, in particular the inertia is seen in kinetic cases by the fact that the Hessian of the bottle-neck saddle point does not intervene in the leading term of this prefactor, contrary to the diffusive case, but this is far from a square-root improvement of the rate). Among processes with a local motion (e.g. continuous trajectory of the position), inertia and non-reversibility is expected to help  in convex and flat regions (see e.g. \cite{Diaconis2000,M22}) but it doesn't reduce energy barriers (notice that, in our case, the chain is reversible if and only if $\eta=0$, see Remark~\ref{eq:non-reversible}). As already mentioned in the Introduction, the advantage of Hamiltonian-based kinetic processes, with respect to e.g. the overdamped Langevin diffusion \eqref{eq:overdLangevin}, that we are targeting with our result is not that it gives a significative improvement  of the long-time convergence rate but rather that it enables the use of second-order discretization schemes \cite{Toappear} (which is somehow related to the diffusive/ballistic dichotomy, since an unadjusted HMC with $\eta=0$ and step-size $t$ corresponds to an Euler scheme of \eqref{eq:overdLangevin} with stepsize $t^2/2$).

}

\item In practice, one can try to have a good initial distribution by finding first $x_*$ a local minimizer of $U$ using a deterministic optimization algorithm, and then taking an initial condition with law $\nu_0 = \nu_0^1\otimes \mathcal N(0,I_d)$ with $\nu_0^1 = \mathcal N(x_*, I_d/L)$. Indeed, in that case, using that $U(x) \leqslant L|x-x_*|^2/2+U(x_*)$  we can bound
\begin{eqnarray*}
\Ent(\nu_0|\mu ) 
& \leqslant &  \int_{\R^d} \nu_0^1 \ln \nu_0^1 + \frac{L}2 \int_{\R^d}|x-x_*|^2 \nu_0^1(\dd x) + U(x_*) + \ln \int_{\R^d} e^{-U} \\
& = & d  \ln \po L/\sqrt{2\pi}\pf     + U(x_*) + \ln \int_{\R^d} e^{-U}\,,
\end{eqnarray*}
and similarly, using that $|\na U(x)| \leqslant L|x-x_*|$,
\[
\mathcal I(\nu_0|\mu)   \leqslant  2 \int_{\R^d} |\na \ln \nu_0^1|^2 \nu_0^1 + 2 \int_{\R^d} |\na U|^2 \nu_0^1  \leqslant 4dL
\]
(which also means we can bound $\Ent(\nu_0|\mu) \leqslant 4dL C_{LS}$). Hence, in terms of the dimension, we can consider that $\mathcal L(\nu_0)$ is $\mathcal O(d)$.
\end{itemize}

\subsection{Regularization}\label{subsec:regularization}

 Our second main result is the an entropy/Wasserstein regularization result, similar to the result of Guillin and Wang in \cite{GuillinWang} for the Langevin diffusion. We require the following additional condition:

\begin{assumption}\label{hyp:Frobenius}
The Hessian of $U$ is  Lipschitz. In particular, we consider  $L_H>0$ such that, for all $x,x'\in\R^{d}$,
\begin{equation}\label{ass:na2UFrob}
\|\na^2 U(x) - \na^2 U(x')\|_F \leqslant L_H |x-x'|\,.
\end{equation}
\end{assumption}

In the next result, we use the rescaling of Section~\ref{sec:rescaling} to work with $L=1$. To apply it with $L\neq 1$, $L_H$ has to be replaced by $L_H/L^{3/2}$. We consider the Markov transition operator $\Q = \D_\eta \H_t \D_\eta$.

\begin{theorem}\label{thm:Wasserstein/entropie}
\new{Under Assumptions~\ref{hyp:main} and \ref{hyp:Frobenius}  with $L=1$}, for all $\nu\in\mathcal P(\R^{2d})$,
\[\Ent \po \nu \Q |\mu \pf \ \leqslant \ c_1(\eta,t) \mathcal W_{2}^2 (\nu,\mu)\,, \] 
where
\[c_1(\eta,t) = \max(1,\eta^2 t^2 ) \po \frac{13}{2t^2(1-\eta^2)} +\new{5} L_H^2  t^4 \pf\,. \]
If, moreover, $\eta>0$ \new{and $t\leqslant 1/8$} then for all $\nu\in\mathcal P(\R^{2d})$ and all $n\geqslant 2$, 
\[\Ent \po \nu \Q^n |\mu \pf \ \leqslant \ c_{\new{n}}(\eta,t)\mathcal W_{2}^2 (\nu,\mu) \] 
 with $c_{n}(\eta,t) =  c_*(  t \min(n,\lfloor 1/(4t)\rfloor)  )$   where, recalling that $\gamma = (1-\eta)/t$, for all $s>0$,
\[c_*(s)=\max(1,\eta^2 s^2) \po\frac{ s  }{\gamma  } \po       \frac{12}{ \eta s^2}  + \frac{6\gamma}{\eta s   } +              4\pf^2  + \new{132}  e^{4s/3} (L_H  s)^2\pf\,.\]

\end{theorem}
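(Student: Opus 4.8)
The plan is to adapt the strategy behind the Guillin--Wang regularization for the kinetic Langevin diffusion \cite{GuillinWang} to the discrete chain, exploiting the fact that a single step $\Q = \D_\eta\H_t\D_\eta$ starts and ends with a Gaussian randomization, so the only possible degeneracy is in the position marginal after the deterministic flow. The natural route is a \emph{reverse} estimate: couple $\nu$ with $\mu$ optimally for $\mathcal W_2$, push the coupling through $\Q$, and bound $\Ent(\nu\Q|\mu)$ by controlling how much the density of $\nu\Q$ relative to $\mu$ can blow up. Concretely, I would use the semigroup/interpolation identity that expresses $\Ent(\nu\Q|\mu)$ as an integral of Fisher-information-type quantities along the auxiliary Ornstein--Uhlenbeck-type dynamics underlying $\D_\eta$, combined with the fact that $\H_t$ is a measure-preserving diffeomorphism whose Jacobian is controlled by Assumption~\ref{hyp:main} (hence $t\sqrt L\le 1/4$) and whose second-order variation is controlled by Assumption~\ref{hyp:Frobenius}. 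The first $\D_\eta$ injects Gaussian noise in the velocity; then $\H_t$ mixes this velocity noise into the position (this is where the factor $1/t^2$ in $c_1$ comes from, reflecting that after time $t$ the velocity noise has produced position noise of size $\sim t$); the final $\D_\eta$ injects the remaining $\sqrt{1-\eta^2}$ velocity noise directly. The factor $1/(1-\eta^2)$ accounts for the variance of the injected Gaussians, and the $L_H^2t^4$ term is the contribution of the nonlinearity of the Hamiltonian flow (its deviation from the linear/free flow over time $t$), estimated via the preliminary results of Section~\ref{sec:prelimHamilton}.

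For the single-step bound, the key steps are: (i) write $\nu\Q = \D_\eta\,[(\H_t)_\#(\D_\eta)_\#\nu]$ and use that for the Gaussian channel $\D_\eta$ one has a ``de Bruijn / regularization'' estimate bounding $\Ent(\D_\eta\lambda|\mathcal N)$ by (a constant times) the transport cost of $\lambda$ to $\mathcal N$ in the velocity variable, with constant $\sim 1/(1-\eta^2)$; (ii) transport the position-entropy through $\H_t$: since $\H_t$ is volume-preserving and $(\H_t)^{-1}=\Phi_{-t}$, the density transforms without a Jacobian factor, but the \emph{gradient} of the log-density picks up a factor $\na\Phi_{-t}$, whose operator norm is bounded by something like $e^{t\sqrt L}\le e^{1/4}$ using $\dot x=v,\dot v=-\na U(x)$ and $|\na^2U|\le 1$; (iii) transport the velocity-noise created in the first $\D_\eta$ step forward through $\H_t$ into the position — this requires lower-bounding the component of $\na\Phi_t$ mapping initial velocities to final positions, which over short time $t$ is $\sim t\,I_d$ up to corrections controlled by $L$ and $L_H$, yielding the $1/t^2$; (iv) collect the three contributions and the $\max(1,\eta^2t^2)$ prefactor (which comes from the rescaling $\mathcal W_{2,L}$ versus the unscaled cost and the relative weighting of position vs.\ velocity). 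Assumption~\ref{hyp:Frobenius} enters only in (iii) to control the curvature of $\H_t$ so that this velocity-to-position map is genuinely invertible/bounded-below with the stated error.

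For the iterated bound ($n\ge 2$, $\eta>0$, $t\le 1/8$), the idea is to first run $n-1$ steps of $\Q$ and use the long-time contraction from Theorem~\ref{thm:dissipation_modified} to bring the modified entropy down, then apply the single-step regularization. More precisely: apply the single-step result to get $\Ent(\nu\Q|\mu)\le c_1\mathcal W_2^2(\nu,\mu)$; then iterate using that $\Q$ is entropy-contracting (an analogue of Theorem~\ref{thm:dissipation_modified} for $\Q=\D\H\D$, or a direct argument since $\D$ only decreases entropy) so that $\Ent(\nu\Q^n|\mu)$ can be controlled by $\Ent(\nu\Q^{n_0}|\mu)$ for $n_0=\min(n,\lfloor 1/(4t)\rfloor)$, and finally use a short-time $\mathcal W_2$-propagation estimate: over $n_0$ steps the chain can have moved in $\mathcal W_2$ by at most a factor $e^{C n_0 t}$, explaining why $c_n$ depends on $s=t\min(n,\lfloor 1/(4t)\rfloor)$ and carries the factor $e^{4s/3}$ and the polynomial-in-$1/(\eta s)$ corrections (the $12/(\eta s^2)+6\gamma/(\eta s)+4$ comes from re-summing the per-step $1/t^2$ noise injections over $n_0$ steps into an effective $1/s^2$, weighted by $1/\gamma$). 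The cap at $\lfloor 1/(4t)\rfloor$ is forced by Assumption~\ref{hyp:main}: beyond integration time $\sim 1/\sqrt L=1$ the Hamiltonian flow can start to refocus, so one cannot let the effective time $s$ grow indefinitely.

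The main obstacle I expect is step (iii): quantitatively controlling the velocity-to-position block of $\na\Phi_t$ (and its inverse) uniformly, with error terms that are genuinely of order $t^4 L_H^2$ and do not degrade as $\eta\to1$ or $t\to0$ — this is precisely what makes the bound compatible with the Langevin limit and is the discrete analogue of the delicate commutator estimates in \cite{GuillinWang}. A secondary difficulty is organizing the iteration so that the contraction of Theorem~\ref{thm:dissipation_modified} (stated for the modified entropy $\mathcal L$, not $\Ent$ alone) combines cleanly with the $\mathcal W_2$-input, which likely requires passing through the Talagrand $T_2$ inequality from the LSI and keeping track of the $a$-dependent weights in $\mathcal L$.
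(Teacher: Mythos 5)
There is a genuine gap, and it is concentrated in your treatment of the iterated bound. You propose to get $\Ent(\nu\Q^n|\mu)$ by first contracting via Theorem~\ref{thm:dissipation_modified} and then applying the one-step regularization. This cannot produce the stated constant $c_*(s)$ with $s=nt$: applying the one-step bound at the end gives $\Ent(\nu\Q^n|\mu)\leqslant c_1(\eta,t)\,\mathcal W_2^2(\nu\Q^{n-1},\mu)$ with the bad factor $c_1\sim t^{-3}$, and to convert this into $s^{-3}$ you would need a Wasserstein contraction by a factor $(t/s)^{3}$ over $n\sim 1/t$ steps, which is not available (even under a LSI, Theorem~\ref{thm:dissipation_modified} gives a per-step rate of order $t$, hence only an $O(1)$ gain after $1/t$ steps); in the Langevin regime $t\to0$, $\eta=1-\gamma t$, $n\sim1/t$ your route therefore degenerates while the theorem does not. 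It is also logically mismatched: Theorem~\ref{thm:Wasserstein/entropie} assumes no log-Sobolev inequality and its constants do not involve $C_{LS}$, whereas your iteration imports one. The actual mechanism in the paper is that the Gaussian variables of \emph{all} $n_0=\min(n,\lfloor 1/(4t)\rfloor)$ steps are used jointly: one builds a map $\bW=\Psi^n_{z,z'}(\bG)$ on the whole noise sequence, prescribing an interpolating bridge $y_k$ between the two trajectories (with $u_{k+1}=u_k+\eta t w_k$, so positions are driven by velocities), and bounds $\Ent(\delta_{z'}\Q^n|\delta_z\Q^n)$ by $\mathbb E(\tfrac12|\Psi^n_{z,z'}(\bG)-\bG|^2+\|\na\Psi^n_{z,z'}(\bG)-I\|_F^2)$; summing the per-step discrepancies over $n_0$ steps is what yields the $s^{-3}$ scaling, the $(12/(\eta s^2)+6\gamma/(\eta s)+4)^2$ structure and the $e^{4s/3}$ Jacobian growth. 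Entropy contraction plays no role there.

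For the one-step bound your picture is closer, but the sketch is missing the concrete device. A de Bruijn/Fisher-information interpolation for $\D_\eta$ alone regularizes only the velocity marginal; the position regularization requires inverting the velocity-to-position map of the flow, i.e.\ the function $K$ with $\Phi_t^1(x,K(x,x'))=x'$ of Lemma~\ref{lem:Hxx'}, which is used to write the one-step kernel as an explicit diffeomorphic image of the Gaussian noise and to bound $\Ent(\delta_{z'}\Q|\delta_z\Q)$ via the Gaussian change-of-variables formula (the estimate of \cite[Lemma 15]{BouRabeeEberle}), with Assumption~\ref{hyp:Frobenius} controlling the Frobenius-norm Jacobian terms. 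Finally, the passage from the kernel-level bound to $\Ent(\nu\Q^n|\mu)\leqslant c\,\mathcal W_2^2(\nu,\mu)$ is through a log-Harnack inequality integrated against an optimal coupling of $\nu$ and $\mu$, using the invariance of $\mu$ and Jensen's inequality; this step (which replaces your "push the coupling through $\Q$") should be made explicit, as it is where the $\mathcal W_2^2$ on the right-hand side actually appears.
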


This is proven in Section~\ref{subsec:proof-W2Ent}. Some remarks:
\begin{itemize}
\item Since $\D_{\sqrt{\eta}}^2 = \D_\eta$, we can use this result to get an information about  $\P$ by writing either $\P^n \D_{\sqrt\eta} = \D_{\sqrt{\eta}} \po \D_{\sqrt{\eta}}\H_t \D_{\sqrt\eta}\pf^{n}$ (in which case we simply replace $\eta$ by $\sqrt{\eta}$ when applying the result) or $\P^n \D_\eta = \P^{n-1} \Q = \Q \P^{n-1}$. In practice, adding $\D$ at the beginning or the end of a simulation has no effect since, anyway, the initial velocity is at equilibrium, and similarly the final velocity is not used. Moreover, applying $\D$ reduces both the relative entropy and the $\mathcal W_2$ distance with respect to $\mu$. The reason we work with $\Q$ instead of $\P$ here is that, as  discussed in \cite[Section 2.1]{MonmarcheSplitting}, having noise at the beginning and the end of a transition is necessary to get a one-step regularization (since both position and velocity have to be regularized).
\item As expected in this kinetic case, similarly to the Langevin case \cite{GuillinWang}, for a fixed $\eta$, $c_1(\eta,t)$ is of order $1/t^3$ for small $t$ and, similarly, $c_*(s)$ is of order $1/s^3$ for small $s$ \new{(treating $\gamma$ as a constant independent from $t$)}. In fact, we could distinguish the contribution from the position and the velocity in the Wasserstein distance and we would indeed get the scaling  $1/t^3$ (or $1/s^3$) for the position, but only $1/t$ (or $1/s$) for the velocity, see Proposition~\ref{prop:densitéW2} below.
\item The first statement is well adapted to the classical HMC case where $\eta$ is fixed (say $\eta=0$) and $t$ is of order $1/\sqrt{L}$. However, in the Langevin scaling where $t \ll 1$ is of the order of the time-step of a Verlet scheme and $\eta\simeq 1$, we have to use the second statement with $n$ of order $1/t$ to get a correct estimate (which are then consistent with the Langevin case \cite{GuillinWang}). Then, from this limit, if we accelerate time by a factor $\gamma$ and let $\gamma\rightarrow \infty$ we recover the result for the overdamped Langevin diffusion \cite{RocknerWang}.
\item \new{A similar $\mathcal W_1$/total variation regularization result has been established   in \cite{BouRabeeEberle} in the case $\eta=0$ and in \cite{MonmarcheSplitting} for a Langevin splitting scheme.  In fact, the main part of the  proof of Theorem~\ref{thm:Wasserstein/entropie}, which  is  Proposition~\ref{prop:densitéW2} below, is based on  \cite[Lemma 16]{BouRabeeEberle}.
\item \new{Thanks to Theorem~\ref{thm:Wasserstein/entropie}, any result of long-time convergence for the chain in the $\mathcal W_2$ Wasserstein distance (as in \cite{ChenVempala} for $\eta=0$ or \cite{MonmarcheHMCconvexe} for $\eta>0$, both in the strongly log-concave case, see Section~\ref{subsec:convex_result}) automatically gives a similar result for the relative entropy, with the same rate. }
}
\end{itemize}

\subsection{\new{Three} variations on Theorem~\ref{thm:dissipation_modified}}

\new{In this section, we give two generalizations of Theorem~\ref{thm:dissipation_modified}, and a slight adaptation in the strongly log-concave case. In fact, Theorem~\ref{thm:dissipation_modified} is a particular case of Theorem~\ref{thm:dissipation_modified_G} below, which is itself a particular case of Theorem~\ref{thm:dissipation_modified_G_random} below. The reason we present Theorem~\ref{thm:dissipation_modified}   instead of its generalizations as one our main results is that the former is our main motivation and its proof contains all the interesting ideas of this work. Avoiding any superfluous generality, the proof of Theorem~\ref{thm:dissipation_modified} is clearer, and from this first case the extensions are easy.
}

\subsubsection{$L^2$ norm and other entropies}\label{subsc:G-entropy}

Although we primarily focus on the relative entropy due to the scaling properties mentioned in the introduction and in view of Theorem~\ref{thm:Wasserstein/entropie}, the hypocoercive decay stated in Theorem~\ref{thm:dissipation_modified} also holds for more general  entropies.

Let $G$ be a $\mathcal C^4$ convex function on $\R_+$ such that $G(1)=0$ and $1/G''$ is positive concave. For $\nu \ll \mu$, denoting $h = \dd \nu/\dd \mu$, we call
\[\Ent_G\po \nu|\mu \pf = \int_{\R^d} G(h) \dd \mu\,,  \]
which is positive,  the $G$-entropy of $\nu$ with respect to $\mu$. An associated quantity is the $G$-Fisher Information defined by
\[\mathcal I_G\po \nu |\mu\pf = \int_{\R^d} G''(h) |\na h|^2 \dd \mu\,.\]
 We say that $\mu$ satisfies a $G$-Poincaré inequality with constant $C_G>0$ if
  \[\forall \nu \ll \mu,\qquad \Ent_G(\nu|\mu) \ \leqslant \ C_{G} \mathcal I_G(\nu|\mu)\,.\]
For $G(u) = u\ln u$, we recover the relative entropy, Fisher Information and log-Sobolev inequality. For $G(u)=(u-1)^2/2$, we get respectively $\| h-1\|_{L^2(\mu)}^2/2$ (i.e. half the square of the chi-square divergence of $\nu$ with respect to $\mu$), $\|\na h\|_{L^2(\mu)}^2$ and the classical Poincaré inequality. Other classical cases are $G(u) = u^p-1$ for $p\in(1,2]$, corresponding to Beckner inequalities.   We refer to \cite{BolleyGentil} for general considerations on such entropies  and some criteria to prove such inequalities.

Theorem~\ref{thm:dissipation_modified} in fact applies in this more general framework. In other words, considering a modified $G$-entropy of the form
\begin{equation}\label{eq:LG}
\mathcal L_G\po \nu \pf \ = \ \Ent_G(\nu|\mu) + a\int_{\R^{2d}} G''(h) |\sqrt{L} \na_x h + \na_v h|^2 \dd \mu 
\end{equation}
 for some parameter $a>0$, the following holds:

\begin{theorem}\label{thm:dissipation_modified_G}
Theorem~\ref{thm:dissipation_modified} is still true (with the same constants) if $\mathcal L$ is replaced by $\mathcal L_G$ and the LSI inequality is replaced by a $G$-Poincaré inequality.
\end{theorem}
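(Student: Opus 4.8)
The plan is to re-run the entire proof of Theorem~\ref{thm:dissipation_modified} keeping track only of which algebraic identities and which functional inequality are actually used, and to observe that everything carries over verbatim once one replaces $u\ln u$ by a general admissible $G$ and the log-Sobolev inequality by the corresponding $G$-Poincar\'e inequality. The key structural fact to exploit is that the proof of Theorem~\ref{thm:dissipation_modified} is built out of two ingredients: (i) exact (or nearly exact) \emph{commutation/chain-rule} computations along the Hamiltonian flow $\H_t$ and along the Gaussian randomization $\D_\eta$, which express $\mathcal L(\nu\P)$ in terms of integrals of $h$, $\na h$, and second-order quantities against $\mu$; and (ii) a single application of a functional inequality at the end to close the estimate. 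I would argue that step (i) never uses the specific form $G(u)=u\ln u$ beyond the generic identities $G'(1)=0$, $G$ convex, and the ``Bakry--\'Emery'' type relations relating $G''$, $G'''$, $G^{(4)}$, which are exactly the hypotheses imposed on $G$ (in particular $1/G''$ positive concave is the standard condition, see \cite{BolleyGentil}, ensuring that the needed differential inequalities on $G$ hold).

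First I would isolate, in the proof of Theorem~\ref{thm:dissipation_modified} as given in Section~\ref{sec:proof-hypoco}, every place where a computation involving $\ln h$ or $h\ln h$ appears, and rewrite it in terms of $G(h)$, $G'(h)$, $G''(h)$. The dissipation of $\Ent_G$ along $\D_\eta$ (the Gaussian step) produces a Fisher-type term $\int G''(h)|\na_v h|^2\dd\mu$ exactly as in the entropy case, since $\D_\eta$ acts only on the $v$ variable as a (scaled) Ornstein--Uhlenbeck-type operator and the relevant computation is the classical one for $G$-entropies under a Mehler-type semigroup. Likewise, the action of $\H_t$ on $\Ent_G$ and on the modified term $a\int G''(h)|\sqrt L\na_x h+\na_v h|^2\dd\mu$ is governed by the transport of $h$ by the Hamiltonian flow and by the Jacobian estimates collected in Section~\ref{sec:prelimHamilton}; those estimates concern $\Phi_t$ only and are completely insensitive to $G$. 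The cross terms and the ``error'' terms controlled using $t\sqrt L\le 1/4$ involve, at worst, Cauchy--Schwarz and the pointwise bounds on $\na^2 U$, again with no reference to $G$; the coefficients $m_1,m_2,m_3,\rho$ come out \emph{identical}. The only substantive point where one must be slightly careful is when, in the entropy case, one uses $|\na\sqrt h|^2=\tfrac14|\na h|^2/h$ and identities special to $u\ln u$ (so that $G''(u)=1/u$); here I would instead use the general concavity of $1/G''$ to reproduce the same chain of inequalities — this is precisely the role of that hypothesis and is the analogue of the computation in \cite{BolleyGentil}.

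At the very end, the entropy proof invokes the log-Sobolev inequality \eqref{eq:logSob_h_bar} for $\mu=\pi\otimes\mathcal N(0,I_d)$ to bound $\Ent(\nu|\mu)$ by $C_{LS}$ (up to the factor $\max(C_{LS},1)$) times the Fisher information $\int(|\na_x h|^2+|\na_v h|^2)/h\,\dd\mu$. In the $G$-version I would instead invoke the tensorization of $G$-Poincar\'e inequalities: the standard Gaussian satisfies a $G$-Poincar\'e inequality with constant $1$ (for any admissible $G$, by the same argument as for LSI, e.g. \cite{BolleyGentil}), and $G$-Poincar\'e inequalities tensorize, so $\mu$ satisfies a $G$-Poincar\'e inequality with constant $\max(C_G,1)$, yielding $\Ent_G(\nu|\mu)\le\max(C_G,1)\int G''(h)(|\na_x h|^2+|\na_v h|^2)\dd\mu$. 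Plugging this into the already-established differential/recursive inequality on $\mathcal L_G(\nu\P)$ — which has the \emph{same} coefficients as for $\mathcal L(\nu\P)$ — gives \eqref{eq:LnuP} with $\mathcal L$ replaced by $\mathcal L_G$ and $C_{LS}$ by $C_G$, and the last assertion (the choice $a\le\gamma/[14+8(\gamma+3)^2]$ giving $\rho\ge 3t/8$) is purely a statement about $m_1,m_2,m_3,\rho$ and is unchanged.

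The main obstacle, and the only place where real (though routine) work is needed, is the bookkeeping of the second-order terms in the modified $G$-entropy: one must check that the ``carr\'e du champ it\'er\'e'' quantities that arise when $\H_t$ acts on $a\int G''(h)|\sqrt L\na_x h+\na_v h|^2\dd\mu$ can be controlled using only $G$ convex with $1/G''$ concave and the Hessian bound on $U$, with no hidden use of $G''(u)=1/u$. I expect this to go through exactly as in the classical $\Phi$-entropy calculus of Bakry and \cite{BolleyGentil}: the concavity of $1/G''$ is precisely the condition that makes the relevant pointwise inequality $G'''(u)^2\le \tfrac{?}{?}G''(u)G^{(4)}(u)$-type bounds hold, so that every estimate used in the entropy proof has a verbatim $G$-analogue with the same numerical constants. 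Once this is checked, the statement ``Theorem~\ref{thm:dissipation_modified} is still true with the same constants'' follows with no further computation.
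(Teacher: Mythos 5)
Your overall plan is the same as the paper's: re-run the proof of Theorem~\ref{thm:dissipation_modified} with every quantity of the form $\int |B\na h|^2/h\,\dd\mu$ replaced by $\int G''(h)|B\na h|^2\dd\mu$, note that the Hamiltonian-flow estimates of Section~\ref{sec:prelimHamilton}, the matrix algebra on $S$ and hence the constants $m_1,m_2,m_3,\rho$ are untouched, use the classical $\Phi$-entropy dissipation of \cite{BolleyGentil} for the Ornstein--Uhlenbeck step, and invoke the $G$-Poincar\'e inequality (tensorized with the Gaussian) in place of the LSI at the end. All of that is right.

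However, there is a gap at the one point where a genuinely new ingredient is needed, and your proposed route for it is not the right one. The proof of Theorem~\ref{thm:dissipation_modified} repeatedly uses the Jensen-type contraction \eqref{eq:JensenFisher} (and its exact version \eqref{eq:JensenFisher_H}) for the Markov operator $\D$ acting on Fisher-like terms, which rests on the joint convexity of $(u,v)\mapsto |v|^2/u$. The $G$-version of the theorem needs the analogous statement: for any Markov operator $\Q$ leaving $\mu$ invariant, $\int G''(\Q h)|\Q A\na h|^2\dd\mu \leqslant \int G''(h)|A\na h|^2\dd\mu$, which follows from the joint convexity of $\Psi(u,v)=G''(u)|v|^2$ — and this convexity is precisely \emph{equivalent} to the hypothesis that $1/G''$ is positive and concave (a one-line Hessian computation). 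This is the inequality the paper isolates and then substitutes for \eqref{eq:JensenFisher} throughout; nothing else changes. Your proposal never states this contraction. Instead you gesture at ``carr\'e du champ it\'er\'e'' quantities and pointwise inequalities of the type $G'''(u)^2\leqslant c\, G''(u)G^{(4)}(u)$ (with the constant left as a placeholder) and say you ``expect this to go through''. No $\Gamma_2$-type computation and no bound on $G'''$ or $G^{(4)}$ appears anywhere in this discrete-time argument — the structure of the proof (entropy decay under $\D$, exact change of variables under $\H^*$, Jensen for the modified Fisher term, functional inequality at the end) deliberately avoids such calculus — so the higher-derivative conditions you invoke are neither needed nor sufficient as stated, and the actual mechanism by which ``$1/G''$ concave'' enters is left unproved in your write-up. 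Supplying the convexity of $\Psi$ and the resulting Markov-operator contraction closes the gap and makes your argument coincide with the paper's.
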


We do not detail the full proof, and simply explain in Section~\ref{subsec:Gentropie} how the proof of Theorem~\ref{thm:dissipation_modified} should be adapted to get this generalization.

One interest of Theorem~\ref{thm:dissipation_modified_G} with respect to Theorem~\ref{thm:dissipation_modified} is that, for instance with $G(u) = (u-1)^2/2$, it only requires a Poincaré inequality, which is weaker than the LSI and can hold for target measure with exponential tails (instead of Gaussian for the LSI).

\subsubsection{Random step-size}

Finally, the adaptation of the proof of Theorem~\ref{thm:dissipation_modified} to the case where the step-size $t$ is random is straightforward (at least if $t\leqslant 1/(4\sqrt{L})$ almost surely, otherwise we have to take into account that for larger values the entropy does not decay and the Fisher information part of $\mathcal L$ may in fact increase. \new{This extension would have no interest since allowing times larger than $1/(4\sqrt{L})$ would only result with our method in a worse upper bound on the convergence rate.} We won't discuss this case). The interest of a random step-size in practice is that it reduces the sensibility of HMC to periodic resonances \cite{RHMC}.

Let $\theta$ be a probability measure on $(0,\infty)$, and $\eta \in[0,1)$ be a fixed constant. The discrete-time Randomized idealized HMC chain is the Markov chain with transition operator $\P_\theta$ given by
\begin{equation}\label{eq:Ptheta}
\P_\theta f(z) = \int_0^\infty \D_\eta  \H_t  f(z) \theta(\dd t)\,,
\end{equation}
so that the integration time of the Hamiltonian dynamics is a random variable with law $\theta$. In the next result, we write $\rho(t)$ the constant \eqref{eq:rho} associated to the parameter $t$.

\begin{theorem}\label{thm:dissipation_modified_G_random}
Assume that:
\begin{itemize}
\item the target distribution $\pi$ has a density proportional to $\exp(-U)$ where $U\in\mathcal C^2(\R^d)$ and satisfies a $G$-Poincaré inequality with constant $C_G$,
\item for all $x\in\R^d$, $|\na^2 U(x)|\leqslant 1$, 
\item the support of $\theta$ is included in $[t_0,t_1]$ for some $0<t_0 \leqslant t_1 \leqslant 1/4$.
\end{itemize}
Then, considering $\mathcal L_G$ given by \eqref{eq:LG} and $a$ sufficiently small so that $ \rho(t)>0$ for all $t\in[t_0,t_1]$,  then, 
 for all $\nu\in\mathcal P(\R^{2d})$,
\[\mathcal L_G(\nu \P_\theta) \leqslant  \int_0^{\new{1/4}}\po 1+ \frac{\rho(t)}{\max(C_{G},1)/a +2 }\pf^{-1} \theta(\dd t) \mathcal L_G(\nu)   \,.\]
\end{theorem}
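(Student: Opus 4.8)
The plan is to reduce Theorem~\ref{thm:dissipation_modified_G_random} to the fixed-step-size result Theorem~\ref{thm:dissipation_modified_G}, which is the only new input needed, by exploiting the convexity of $\mathcal L_G$ as a functional of $\nu$. First I would observe that, writing $\P_t=\D_\eta\H_t$, the definition \eqref{eq:Ptheta} says exactly that $\nu\P_\theta=\int_0^\infty\po\nu\P_t\pf\theta(\dd t)$, i.e. the one-step law with random integration time is the $\theta$-mixture of the one-step laws with fixed time. Since $\theta$ is supported in $[t_0,t_1]\subset(0,1/4]$, for each $t$ in the support the potential $U$ and the time $t$ satisfy Assumption~\ref{hyp:main} with $L=1$ (the Hessian bound is the second hypothesis and $t\leqslant t_1\leqslant 1/4$), and $\pi$ satisfies a $G$-Poincaré inequality; so Theorem~\ref{thm:dissipation_modified_G} applies and gives, for each such $t$,
\[\mathcal L_G(\nu\P_t)\ \leqslant\ \po 1+\frac{\rho(t)}{\max(C_G,1)/a+2}\pf^{-1}\mathcal L_G(\nu)\,,\]
the hypothesis on $a$ being precisely what guarantees $\rho(t)>0$ there. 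Such an $a$ exists: $m_1>0$ on $(0,1/4]$ while $m_3\to+\infty$ as $a\to0$, so $m_1m_3-m_2^2\to+\infty$ uniformly on the compact interval $[t_0,t_1]$ (and similarly, in the $\eta=0$ case, $\rho(t)\to(1-3t)m_1>0$).

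Second, I would check that $\mathcal L_G$ is convex on $\mathcal P(\R^{2d})$. With $h=\dd\nu/\dd\mu$, the map $\nu\mapsto h$ is linear, so $\nu\mapsto\Ent_G(\nu|\mu)=\int G(h)\dd\mu$ is convex because $G$ is convex. For the modified-Fisher term $a\int G''(h)|\sqrt L\na_xh+\na_vh|^2\dd\mu$, the key point is the joint convexity on $\R_+\times\R^{2d}$ of $(u,p)\mapsto G''(u)|p|^2$, which holds exactly because $1/G''$ is positive and concave (this is the classical criterion underlying $G$-entropy calculus, see \cite{BolleyGentil}); combined with the linearity of $\nu\mapsto(h,\sqrt L\na_xh+\na_vh)$ and Fubini's theorem, this gives convexity of the second term. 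When the densities involved are not smooth, one works with the relaxed gradient $|\na\cdot|$ of Section~\ref{sec:logSob}, for which the functional remains convex and lower semicontinuous, or one reduces to the smooth case by approximation.

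Finally, combining the two steps, Jensen's inequality for the convex functional $\mathcal L_G$ with mixing measure $\theta$ gives
\[\mathcal L_G(\nu\P_\theta)\ \leqslant\ \int_0^\infty\mathcal L_G(\nu\P_t)\,\theta(\dd t)\ \leqslant\ \int_0^{1/4}\po 1+\frac{\rho(t)}{\max(C_G,1)/a+2}\pf^{-1}\theta(\dd t)\ \mathcal L_G(\nu)\,,\]
which is the claimed estimate. I expect the only genuinely delicate point to be the convexity and lower semicontinuity of the relaxed modified-Fisher functional under mixtures of measures — this is where a little care with the weak-gradient definition is needed; everything else is just the fixed-$t$ bound of Theorem~\ref{thm:dissipation_modified_G} inserted into Jensen's inequality, the randomness in $t$ surfacing only through the averaged contraction factor $\int_0^{1/4}(1+\rho(t)/(\max(C_G,1)/a+2))^{-1}\theta(\dd t)$.
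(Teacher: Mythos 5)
Your proposal is correct and follows essentially the same route as the paper: the paper also writes the density of $\nu\P_\theta$ as the $\theta$-mixture of the densities of $\nu\D\H_t$, applies Jensen pointwise using the convexity of $G$ and the joint convexity of $(u,p)\mapsto G''(u)|p|^2$ (the $1/G''$ concavity criterion) to get $\mathcal L_G(\nu\P_\theta)\leqslant\mathbb E_\theta[\mathcal L_G(\nu\D\H_T)]$, and then invokes the fixed-$t$ theorem for each $t$ in the support of $\theta$. Your phrasing via convexity of the functional $\mathcal L_G$ on $\mathcal P(\R^{2d})$ is just a repackaging of the same pointwise Jensen argument, and your remarks on the existence of a suitable $a$ and on regularity/approximation are consistent with the paper's treatment.
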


This is proven in Section~\ref{subsec:random}. A few comments:

\begin{itemize}
\item In particular, in view of the rough bound stated at the end of Theorem~\ref{thm:dissipation_modified}, we can take
\[a = \min_{i\in\{0,1\}}  \frac{\gamma_i}{14 + 8(\gamma_i+3)^2}\quad\text{where}\quad \gamma_i = \frac{1-\eta}{t_i}\,, \ i\in\{0,1\}\,,\]
in which case $\rho(t) \geqslant 3t/8$ for all $t\in[t_0,t_1]$.
\item In fact we could similarly consider the case where $\eta$ is random at each step. In particular we could take $\eta = 1-\gamma T$ where $\gamma$ is fixed  and $T\sim \theta$. The proof would be exactly the same, with the same condition of $a$ that in Theorem~\ref{thm:dissipation_modified}. However, contrary to random step sizes, random damping parameters are not used in practice so we do not detail this.
\item Notice that, contrary to the study in \cite{RHMC} of the auto-correlation time in the Gaussian case, Theorem~\ref{thm:dissipation_modified_G_random} does not help in understanding whether using a random step-size is useful in practice. This is because we work under the assumption that $t\sqrt{L}\leqslant 1/4$, which means we are below the threshold above which HMC suffers from periodicity issue, even for the highest frequency of the system. \new{For this reason,  Theorem~\ref{thm:dissipation_modified_G_random} has to be understood as a small collateral gain from Theorem~\ref{thm:dissipation_modified_G} worth mentioning rather than a crucial progress in the study of Randomized HMC. Besides, concerning the efficiency in term of convergence rate for the time marginal, for unadjusted Randomized HMC, the recent work \cite{rHMCGaussien} (released as a preprint at the same time as the present work, and after \cite{MonmarcheHMCconvexe}) shows that, for Gaussian target distributions, the use of a suitably tuned random refreshment time leads to a convergence rate of order $1/\sqrt{\kappa}$ (in total variation), where $\kappa$ is the condition number of the covariance matrix of the target (which also correspond to the results of \cite{LuWang} for more general potentials but for the continuous-time Randomized HMC, and for the $L^2$ norm, which has already been discussed in Section~\ref{sec:MainResults}), which is an improvement with respect to the optimal HMC with deterministic integration time and $\eta=0$, for which the convergence rate scales as $1/\kappa$. However, it is established in \cite{MonmarcheHMCconvexe} that, similarly, for Gaussian targets, by allowing $\eta>0$, the convergence rate (in Wasserstein distances) of the optimal unadjusted HMC with deterministic integration time also scales as $1/\sqrt{\kappa}$, and so does the optimal unadjusted kinetic Langevin   splitting scheme (which is thus to \cite{CaoLuWang} what  \cite{rHMCGaussien} is to \cite{LuWang}, namely a result  for a realistic discretized algorithm, but restricted to Gaussian targets, instead of a general result in $L^2$ for a continuous-time process).   In other words, long randomized integration times or inertia with $\eta >0$ are two different approaches leading to the same gain. In the continuity of \cite{MonmarcheHMCconvexe}, the present work is mostly concerned with the latter approach, which is why we focus   much in the whole article on the Langevin regime where $1-\eta = \mathcal O(t) $ is small. Notice that both approaches are in fact linked by the same idea, which is that the typical mean free path of the position should be of order $\sqrt{\kappa}$. Indeed, assume for simplicity that $U=0$. Then, starting from a state $(x,v)$, $\mathbb E(V_n) = \eta^n v$ and thus $\mathbb E(X_n) \simeq  x + t v/(1-\eta) $ for large $n$. This mean free path $t/(1-\eta)$ is of order $\sqrt{\kappa}$ under the optimal scaling in the Gaussian case established in \cite{MonmarcheHMCconvexe}. Similarly, the time integration in \cite[Lemma 3.1]{rHMCGaussien} is uniformly distributed over  $[0,10\pi\sqrt{\kappa}]$ (normalizing $\|\na^2 U\|_\infty = 1$). This is also the consistent with the damping parameter  in \cite{CaoLuWang}  and the refreshment rate of \cite{LuWang} (both of order $1/\sqrt{\kappa}$) in continuous time. As a last comment on this topic, we can think that the two approaches can in fact easily be combined, i.e. using short random step size (which is sufficient to destroy any high-frequency periodicity of the order of the step size) and inertia (i.e. $1-\eta$ of the order of the step size) to ensure a long mean free path. 
 }     
\item \new{A variant of the randomized integration time procedure addressed here is the Andersen's thermostat \cite{Andersen1,Andersen2}. Initially introduced for models in physics where the state is $x=(x_1,\dots,x_N) \in (\R^p)^N$ with $N$  the number of particles and $x_i\in\R^p$ the position of the $i^{th}$ particle, the difference with $\P_\theta$ is that the velocity of each particle is refreshed at a time which is independent from the other particles (so, our framework is similar to the particular case of Andersen's thermostat where $N=1$ and $p=d$). Our method could be extended to a slight modification of $\P_\theta$ in this spirit where, in a time interval $[0,t_*]$ (with $t_*$ either deterministic or random, but with $t_* \leqslant 1/(4\sqrt{L})$ almost surely in any cases), each particle sees its velocity refreshed at a time $t_i \leqslant t_*$ (for instance $t_i = u_i t_*$ where $u_1,\dots,u_N$ are uniformly distributed over $[0,1]$ and independent from $t_*$). Indeed, it is crucial in our proof that, during one iteration of the chain, all velocities have been (partially) refreshed with probability 1. When this is not the case, $L^2$ hypocoercivity results have been established (e.g. in \cite{LuWang,Doucet,ADNR}) but entropic results are very restricted \cite{Evans2017,M21} (see also the discussion in \cite[Section 2.3]{MonmarcheContraction} on a related topic). Extending our results to the genuine Andersen's thermostat thus seems challenging.   }
\end{itemize}

\subsubsection{\new{The strongly convex case}}\label{subsec:convex_result}

\new{
In the  case where $U$ is strongly convex, a method to get a long-time convergence in Wasserstein distance is to consider the parallel coupling of two processes, namely to construct two trajectories from  different initial conditions with the same source of randomness (i.e. same Brownian motion for the overdamped or kinetic Langevin diffusions, same Gaussian variables in the velocity refreshment for HMC), and then prove that the distance between the two trajectories goes to $0$ with time. This has been extensively used over the last years, see \cite{MonmarcheHMCconvexe} and references within.

 Let us for instance quote two results, from \cite{ChenVempala} and \cite{MonmarcheHMCconvexe}, which concern the idealized HMC considered in the present work. In the next statement, $(z_k)_{k\in\N}$ and $(z_k')_{k\in\N}$ are called a parallel coupling of two idealized HMC if they are both Markov chains associated to $\P$ and such that the variables $G_k$ used in the step \eqref{eq:damping} are the same for both chains.

\begin{proposition}\label{prop:couplage_para}
Assume that there exists $ m>0$ such that $m  I_d\leqslant \na^2 U(x) \leqslant I_d$ for all $x\in\R^{2d}$. Let $(z_k)_{k\in\N}=(x_k,v_k)_{k\in\N}$ and $(z_k')_{n\in\N}=(x_k',v_k')_{k\in\N}$ be  a parallel coupling of idealized HMC.
\begin{enumerate}
\item (from \cite[Lemma 6]{ChenVempala}) Assume furthermore that  $\eta=0$ and $t\leqslant 1/2$. Then, for all $k\in\N$, almost surely,
\[|x_{n}- x_{n}'| \leqslant \po 1 - \frac{m}{4} t^2 \pf^n |x_0-x_0'|\,.\] 
\item (from \cite[Proposition 25]{MonmarcheHMCconvexe}) Let $\overline{\gamma}\geqslant 2 $. Assume furthermore that $t \leqslant m /[24\overline{\gamma}(2+\overline{\gamma}^2)]$ and $\eta = 1 - \overline{\gamma}  t$. Then, for all $n\in\N$, almost surely
\begin{equation}\label{eq:couplage_para_HMC}
|z_{n}- z_{n}'| \leqslant  3 \po 1 - \frac{m}{12\overline{\gamma}} t \pf^n |z_0-z_0'|\,.
\end{equation}
\end{enumerate}
\end{proposition}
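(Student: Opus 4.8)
The statement gathers two bounds quoted from \cite{ChenVempala} and \cite{MonmarcheHMCconvexe}, so strictly speaking the proof is a pointer to those references; I describe the argument one reconstructs in both cases. The common mechanism is the \emph{synchronous} (parallel) coupling for strongly convex potentials: the two chains are driven by the same refreshment Gaussians $(G_k)_{k\in\N}$, so the pair $(x_k-x_k',v_k-v_k')$ evolves through a \emph{deterministic} map at each step (the Jacobian of the Hamiltonian flow followed by the damping), and strong convexity of $U$ forces this map to be a contraction in a suitable norm.

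\textbf{Item 1 ($\eta=0$).} When $\eta=0$ the velocity at a given step is a fresh common Gaussian, so one step of the position chain reads $x_{k+1}=F_{G_k}(x_k)$ where $F_v(x)$ denotes the position component of $\Phi_t(x,v)$; under the synchronous coupling the same $v=G_k$ is used for both chains. I would thus reduce the claim to the deterministic estimate: $F_v$ is $(1-\tfrac m4 t^2)$-Lipschitz, uniformly in $v$, when $t\le 1/2$. Equivalently, if $\delta_s:=x_s-x_s'$ with $x_\cdot,x_\cdot'$ solving $\ddot x_s=-\na U(x_s)$ from a common initial velocity, then $\ddot\delta_s=-H_s\delta_s$ with $H_s:=\int_0^1\na^2U(x_s'+r\delta_s)\,\dd r$ satisfying $mI_d\le H_s\le I_d$ (in the order of symmetric matrices) and $\dot\delta_0=0$, and one wants $|\delta_t|\le(1-\tfrac m4 t^2)|\delta_0|$. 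The proof is a Duhamel/bootstrap argument: from $\delta_s=\delta_0-\int_0^s(s-u)H_u\delta_u\,\dd u$ one first gets $\sup_{[0,t]}|\delta_s|\le 2|\delta_0|$ (using $t\le 1/2$), and then, expanding $|\delta_t|^2=|\delta_0|^2-2\delta_0\cdot\int_0^t(t-u)H_u\delta_u\,\dd u+\bigl|\int_0^t(t-u)H_u\delta_u\,\dd u\bigr|^2$, the leading negative contribution of order $-mt^2|\delta_0|^2$ dominates the $O(t^4)$ remainders.

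\textbf{Item 2 ($\eta=1-\overline\gamma t$).} Here the velocities of the two chains genuinely differ and tracking $|x_k-x_k'|$ alone does not close; the plan is to introduce a twisted quadratic norm $N(x,v)^2=\alpha|x|^2+2\beta\,x\cdot v+\kappa|v|^2$ on $\R^{2d}$, equivalent to the Euclidean norm up to the constant $3$, and to show $N\bigl(z_{k+1}-z_{k+1}'\bigr)\le(1-\tfrac m{12\overline\gamma}t)\,N\bigl(z_k-z_k'\bigr)$. Over one step, $(\Delta x,\Delta v):=(x_k-x_k',v_k-v_k')$ is mapped linearly by the Jacobian of $\Phi_t$ (a block matrix governed by $\int_0^1\na^2U$ along the two trajectories, close to the free-flight matrix $\bigl(\begin{smallmatrix}I&tI\\0&I\end{smallmatrix}\bigr)$ for small $t$) and then by the damping $(\Delta x',\Delta v')\mapsto(\Delta x',\eta\Delta v')$; expanding to second order in $t$ — the integration-time hypothesis $t\le m/[24\overline\gamma(2+\overline\gamma^2)]$ is exactly what makes the higher-order remainders negligible — one picks $(\alpha,\beta,\kappa)$ (with $\beta$ of the appropriate sign, roughly $\beta\sim t$, $\alpha\sim 1$, $\kappa\sim t/\overline\gamma$) so that the quadratic form contracts at rate $\tfrac m{12\overline\gamma}t$, and reverting to the Euclidean norm yields \eqref{eq:couplage_para_HMC}.

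\textbf{Main obstacle.} The delicate part is Item 2: the choice of the coefficients of the twisted norm together with the explicit, correctly-signed second-order control of the flow's Jacobian over a time $t$ — precisely the computation of \cite[Proposition 25]{MonmarcheHMCconvexe}. Item 1, and more generally the reduction of everything to a deterministic ODE comparison estimate, is comparatively routine.
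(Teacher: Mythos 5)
Your proposal is correct and matches the paper's treatment: the paper does not reprove this proposition but quotes it from \cite{ChenVempala} and \cite{MonmarcheHMCconvexe}, and its surrounding discussion describes exactly the mechanism you reconstruct (synchronous coupling, and for item 2 a one-step contraction of a twisted norm $\|z\|_M^2=z\cdot Mz$ followed by equivalence of norms, which accounts for the factor $3$). The only difference in route is that the paper obtains item 2 for the idealized chain by sending the Verlet step size to zero in the unadjusted results of \cite{MonmarcheHMCconvexe} (using the numerical error bounds established there), rather than redoing the second-order control of the exact flow's Jacobian directly as you sketch; both yield the stated bound.
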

More precisely, \cite{MonmarcheHMCconvexe} is concerned with the unadjusted chain where the Hamiltonian dynamics is replaced by a Verlet integrator and thus we get the result here by sending the step size to $0$ in the results of \cite{MonmarcheHMCconvexe} (which is possible thanks to the numerical error bounds established in \cite{MonmarcheHMCconvexe}).

Results such as those given in Proposition~\ref{prop:couplage_para} immediately yields a contraction of Wasserstein distances $\mathcal W_p$ for all $p\geqslant 1$ (hence a convergence in relative entropy thanks to Theorem~\ref{thm:Wasserstein/entropie}). As we state in Proposition~\ref{prop:convex} below, which can be seen as a variation of Theorem~\ref{thm:dissipation_modified} where no entropy part is required in the modified entropy \eqref{eq:modifiedEntropie},  in fact,  they also imply a convergence of the Fisher Information. First, let us highlight a link between  the proof of \eqref{eq:couplage_para_HMC} or of similar results for kinetic processes (e.g. in \cite{Dalalyan2018OnSF} for the kinetic Langevin diffusion or \cite{Doucet} for the continuous-time Randomized HMC) and our proof of Theorem~\ref{thm:dissipation_modified} (or Villani's modified entropy method in \cite{Villani2009}), which is the use of modified Euclidean norms. Indeed, \eqref{eq:couplage_para_HMC} (similarly to the results of \cite{Dalalyan2018OnSF,Doucet}) is established by proving that $\|z_{1}- z_{1}'\|_M \leqslant  ( 1 - m/(12\overline{\gamma}) t ) \|z_0-z_0'\|_M$ with $\|z\|_M^2 = z \cdot M z$ for a suitable matrix $M$, and then the result is obtained using the equivalence between the norms. This is related to the use of a mixed gradient in the Fisher information term of the modified entropy \eqref{eq:modifiedEntropie}. Indeed, we could use a Fisher information term involving $\na h \cdot \tilde M^{-1} \na h$ for some $\tilde M$ (see Remark~\ref{rem:convex}) and recover a contraction of this term at the same rate as in the parallel coupling. We will not detail this   (and rather use directly the results already established with parallel couplings) and refer the interested reader to \cite{MonmarcheContraction} for more details on this connection in the case of continuous-time diffusion processes (see also \cite{Doucet} where two proofs are given, one for a $\mathcal W_2$ convergence using a parallel coupling, and one in the Sobolev space $H^1$, in the spirit of Villani's method but without using the $L^2$ part of the norm, so that in fact the $H^1$ result could have been obtained from the $\mathcal W_2$ one as explained in the proof of Proposition~\ref{prop:convex} below).

In the next statement,  as in Section~\ref{subsc:G-entropy}, $G$ is a $\mathcal C^4$ convex function on $\R_+$ such that $1/G''$ is positive concave, and $\P_\theta$ is given by \eqref{eq:Ptheta} (here we do not assume that the support of $\theta$ is in $[0,1/4]$). When the integration time is random, distributed according to $\theta$, a parallel coupling of two chains is obtained by taking the same Gaussian variables in the step \eqref{eq:damping}  and the same random integration time $t\sim \theta$ in the Hamiltonian step \eqref{eq:HD2}.

\begin{proposition}\label{prop:convex} \

\begin{enumerate}
\item Assume that there exist $n\in\N,\kappa_n >0$ such that for all parallel coupling $(z_k)_{k\in\N}=(x_k,v_k)_{k\in\N}$ and $(z_k')_{n\in\N}=(x_k',v_k')_{k\in\N}$  associated with $\P_\theta$,  almost surely,
\begin{equation}\label{eq:couple1}
|z_n-z_n'|^2 \leqslant  \kappa_n |z_0-z_0'|^2\,.
\end{equation}
Then, for all  $\nu\in\mathcal P(\R^{2d})$,
\begin{equation}\label{eq:IGconvex}
\mathcal I_G(\nu \P_\theta^n|\mu) \leqslant \kappa_n \mathcal I_G(\nu|\mu)\,.
\end{equation}
\item Assume that there exist $n\in\N,\kappa_n >0$ such that for all parallel coupling $(z_k)_{k\in\N}=(x_k,v_k)_{k\in\N}$ and $(z_k')_{n\in\N}=(x_k',v_k')_{k\in\N}$  associated with $\P_\theta$, 
\begin{equation*}
\mathbb E\po |z_n-z_n'|^2\pf  \leqslant  \kappa_n \mathbb E\po |z_0-z_0'|^2 \pf \,.
\end{equation*}
Then, in the case where $G(u) = (u-1)^2$ for $u\in\R_+$,\eqref{eq:IGconvex} holds for all  $\nu\in \mathcal P(\R^{2d}) $.
\end{enumerate}
\end{proposition}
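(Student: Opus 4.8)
The plan is to reduce the statement to a gradient estimate for the \emph{time‑reversed} chain combined with a convexity inequality for $\mathcal I_G$. The elementary observation is that the density of $\nu\P_\theta^n$ with respect to $\mu$ is $(\P_\theta^n)^*h$, where $h=\dd\nu/\dd\mu$ and $(\cdot)^*$ is the adjoint in $L^2(\mu)$ (this uses only that $\mu$ is $\P_\theta$‑invariant, which holds since $\mu$ is invariant for both $\D_\eta$ and $\H_t$). I would first treat the \emph{forward} operator. Under a parallel coupling, one iteration of $\P_\theta^n$ sends $z_0$ to $F_\omega(z_0)$, where $F_\omega$ is a composition of Hamiltonian flows $\Phi_t$ and affine velocity refreshments, $\omega$ collecting the refreshment Gaussians and the (possibly random) times. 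Plugging deterministic initial data $z_0=z$, $z_0'=z'$ into \eqref{eq:couple1} and using that $F_\omega$ is $\mathcal C^1$ (as $\Phi_t\in\mathcal C^1$ under Assumption~\ref{hyp:main}), the hypothesis of $(1)$ says that $F_\omega$ is $\sqrt{\kappa_n}$‑Lipschitz $\mathbb P$‑a.s.; differentiating under the expectation (or arguing with the metric slope of the excerpt) this gives $|\na\P_\theta^nf|\leqslant\sqrt{\kappa_n}\,\P_\theta^n(|\na f|)$. For $(2)$ the weaker hypothesis in expectation only yields $\mathbb E_\omega\|\mathrm dF_\omega(z)\xi\|^2\leqslant\kappa_n\|\xi\|^2$ for all $z,\xi$, but a Cauchy–Schwarz argument then gives $|\na\P_\theta^nf|^2\leqslant\kappa_n\,\P_\theta^n(|\na f|^2)$.

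Second, I would transfer this to the adjoint. A short computation (using $\H_t^*=\H_{-t}$, $\D_\eta^*=\D_\eta$, $\Phi_{-t}=\sigma\Phi_t\sigma$ for the velocity flip $\sigma(x,v)=(x,-v)$, and $\sigma\D_\eta=\D_\eta\sigma$) gives
\[
\po\P_\theta^n\pf^* \;=\; \sigma\,\po\P'_\theta\pf^n\,\sigma\,,\qquad \P'_\theta:=\int_0^\infty \H_t\D_\eta\,\theta(\dd t)\,,
\]
where $\sigma$ also denotes the isometric, $\mu$‑preserving composition operator $f\mapsto f\circ\sigma$, and $\P'_\theta$ is the idealized HMC with the refreshment and Hamiltonian steps exchanged. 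Hence $(\P_\theta^n)^*$ obeys the same gradient estimate as $(\P'_\theta)^n$, and $(\P'_\theta)^n$ obeys it because the parallel‑coupling contraction carries over from $\P_\theta$ to $\P'_\theta$ with the same rate $\kappa_n$ — for the explicit results of Proposition~\ref{prop:couplage_para} this is clear, their proofs being one‑step contractions in a fixed modified Euclidean norm, insensitive to the order of the two sub‑steps inside an iteration.

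Third, I conclude. Write $h_\sigma:=h\circ\sigma$ and $K:=(\P'_\theta)^n$. Since $\mu$ is $\sigma$‑invariant, the change of variable $z\mapsto\sigma z$ gives $\mathcal I_G(\nu\P_\theta^n|\mu)=\int G''(Kh_\sigma)\,|\na Kh_\sigma|^2\dd\mu$. By the gradient estimate of $(1)$ applied to $K$, $|\na Kh_\sigma|\leqslant\sqrt{\kappa_n}\,K(|\na h_\sigma|)$, so it remains to prove
\[
\int G''(Kf)\,(Kg)^2\dd\mu \;\leqslant\; \int G''(f)\,g^2\dd\mu
\]
for every $\mu$‑invariant Markov operator $K$ and all $f,g\geqslant0$. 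Here the assumption that $1/G''$ is concave enters: writing $g=\sqrt{G''(f)g^2}\cdot\sqrt{1/G''(f)}$, Cauchy–Schwarz for the probability kernel $K$ gives $(Kg)^2\leqslant K\po G''(f)g^2\pf\,K\po 1/G''(f)\pf$, while Jensen (concavity of $1/G''$) gives $K(1/G''(f))\leqslant 1/G''(Kf)$; multiplying and integrating against $\mu$ (invariance) yields the claim. Taking $f=h_\sigma$, $g=|\na h_\sigma|$ and undoing the $\sigma$‑change of variable gives $\mathcal I_G(\nu\P_\theta^n|\mu)\leqslant\kappa_n\,\mathcal I_G(\nu|\mu)$. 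For $(2)$, $G''\equiv2$, so $\mathcal I_G(\,\cdot\,|\mu)=2\,\|\na(\cdot)\|_{L^2(\mu)}^2$ and one simply combines the $L^2$ gradient estimate with $\int K(|\na g|^2)\dd\mu=\int|\na g|^2\dd\mu$; the restriction to the chi‑square case is genuine, because the bound in expectation controls the unweighted $L^2$ norm of the gradient but not its $G''(h)$‑weighted version, for which the almost sure bound of $(1)$ is needed.

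The step I expect to be the real obstacle is the second one. The density of $\nu\P_\theta^n$ is transported by the adjoint and \emph{not} by $\P_\theta^n$ itself, so the parallel‑coupling estimate cannot be used directly; and pushing $\nu$ forward by a single realization $F_\omega$ would in general \emph{increase} $\mathcal I_G(\,\cdot\,|\mu)$ (the averaging over $\omega$, which is what makes $\mu$ invariant, is essential, as one already sees on a linear contraction). The velocity‑flip identity above is precisely what circumvents this, reducing everything to the gradient estimate for the exchanged‑order chain $\P'_\theta$; the only point then requiring care is to make sure the coupling contraction is available for $\P'_\theta$ with the same rate, which is immediate from the structure of the proofs of the contraction results invoked.
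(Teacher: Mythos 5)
Your proposal is correct and follows essentially the same route as the paper: the parallel-coupling hypothesis is upgraded to a Kuwada-type gradient commutation estimate ($|\na K f|\leqslant \sqrt{\kappa_n}\,K|\na f|$, resp. its $L^2$ version for part 2), the adjoint is handled by conjugation with the velocity flip, and the conclusion follows from Jensen's inequality for the kernel together with the concavity of $1/G''$ (your Cauchy--Schwarz-plus-Jensen proof of $\int G''(Kf)(Kg)^2\dd\mu\leqslant\int G''(f)g^2\dd\mu$ is exactly the paper's inequality \eqref{eq:22avecG}). The only organizational difference is that you apply the flip at the operator level and therefore invoke the contraction for the sub-step-exchanged chain $\int\H_t\D_\eta\,\theta(\dd t)$ rather than for $\P_\theta$ itself; the paper's own proof makes the same half-step identification implicitly (its adjoint is $\mathcal V(\H\D)^n\mathcal V$ while the coupling hypothesis concerns $(\D\H)^n$), so this is a shared, harmless caveat rather than a genuine divergence or gap.
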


The proof is given in Section~\ref{subsec:convexcase}.

In the case of Gaussian target distribution, it holds $z_{n}- z_{n}' = A^n(z_0-z_0')$ for all $n\in \N$ for some matrix $A$, so that \eqref{eq:couple1} simply holds with $\kappa_n = |A^n|$, and then  the whole analysis of \cite{MonmarcheHMCconvexe} in this case for the  Wasserstein distance  can be transfered to the Fisher Information and relative entropy via Proposition~\ref{prop:convex}.

 The continuous-time Randomized HMC studied in \cite{Doucet} (where, among other things, a convergence rate of order $\sqrt{m}$ for Gaussian targets is established) does not enter exactly the framework of Proposition~\ref{prop:convex} but the proof of the second statement is easily adapted (which is how   \cite[Theorem 5]{Doucet}  could be obtained from \cite[Theorem 3]{Doucet}).

}

\section{Some estimates on the Hamiltonian dynamics}\label{sec:prelimHamilton}

We gather in this section a series of bounds on the Hamiltonian dynamics which will prove useful in the rest of the analysis.

Denoting by $R(x,v)=(x,-v)$ the reflection of the velocity (which is a linear involution) it holds $\Phi_t^{-1} = R \circ \Phi_t \circ R$. In particular \new{$\na ( \Phi_t^{-1}) = R[(\na \Phi_t)\circ R]R$ (writing $R$ both as a function on $\R^{2d}$ and a matrix)}, and thus $|\det (\na \Phi_t^{-1})|=1$. We denote by $\Phi_t = (\Phi_t^1,\Phi_t^2)$ the two $d$-dimensional components of the flow. \new{Recall that our notation is $\na \Phi = (\partial_{z_i} \Phi_j)_{i\in\cco 1,n\ccf,j\in\cco 1,m\ccf}$ ($i$ stands for the \new{row} and $j$ the column).}

\begin{lemma}\label{lem:JacobPhi}
Under Assumption~\ref{hyp:main} with $L=1$, for all  $z\in\R^{2d}$,
\begin{equation}\label{eq:lem1_Et}
\left|\na \Phi_t(z)  -  E_t  \right| \ \leqslant \ 
  \frac{t^3}6 e^{t} \,, 
\end{equation}
 where,  denoting $(x_s,v_s)=\Phi_s(z)$ for $s\geqslant 0$,
 \[E_t = \begin{pmatrix}
1 - \int_0^t(t-s)\na^2 U(x_s)\dd s & -\int_0^t \na^2 U(x_s)\dd s \\
t & 1 - \int_0^t s \na^2 U(x_s)\dd s
\end{pmatrix}\,.\]
In particular, for all  $z,z'\in\R^d$, with $z=(x,v)$, $z'=(x',v')$,
\begin{eqnarray}
|\na_x \Phi_t^1(z)|&   \leqslant &   1 + \frac{t^2}{2} + \frac{t^3}6 e^{t}  \label{eq:lem1_na_xPhi1}     \\
|\na_v \Phi_t^1(z)| &  \leqslant &    t + \frac{t^3}6 e^{t}  \label{eq:lem1_na_vPhi1}\\
|\na_x \Phi_t^2(z)| &   \leqslant &   t + \frac{t^3}6 e^{t}    \label{eq:lem1_na_xPhi2}\\
|\na_v \Phi_t^2(z)| &   \leqslant &    1+ \frac{t^2}2 + \frac{t^3}6 e^{t}  \label{eq:lem1_na_vPhi2}
\end{eqnarray} 
and   
\begin{eqnarray}
|\Phi_t^1(z) - \Phi_t^1(z')| & \leqslant &  \po 1 + \frac{t^2}{2} + \frac{t^3}6 e^{t}  \pf |x-x'| + \po t + \frac{t^3}6 e^{t}\pf |v-v'|   \label{eq:lem1_Phi1z-z'} \\
|\Phi_t^2(z)-\Phi_t^2(z')| 
& \leqslant  &  \po t + \frac{t^3}6 e^{t}\pf |x-x'| + \po 1 + \frac{t^2}{2} + \frac{t^3}6 e^{t}  \pf |v-v'|\,.   \label{eq:lem1_Phi2z-z'}
\end{eqnarray}
If, moreover, Assumption~\ref{hyp:Frobenius} holds then, for all $z,z'\in\R^{2d}$, considering $E_t'$ defined as $E_t$ but with $x_s$ replaced by $\Phi_s^1(z')$,
\begin{equation*}
\|\na \Phi_t(z) - E_t - \new{(\na \Phi_t(z') - E_t')}\|_F \leqslant \frac{7}{20} L_H t^3 \po 2|x-x'| + t |v-v'|\pf\,.
\end{equation*}
In particular,
\begin{eqnarray}
  \|\na_x \Phi_t^1(z) - \na_x\Phi_t^1(z')\|_F  &\leqslant&  \frac{7}{10} t^2 L_H |x-x'| + \frac{21}{40} t^3 L_H |v-v'| \label{eq:lem1_Frob_naxPhi1}\\
  \|\na_v \Phi_t^1(z) - \na_v\Phi_t^1(z')\|_F  &\leqslant& \frac{7}{10} t^3 L_H |x-x'| + \frac{7}{20} t^4 L_H |v-v'| \label{eq:lem1_Frob_navPhi1} \\
  \|\na_x \Phi_t^2(z) - \na_x\Phi_t^2(z')\|_F  &\leqslant&  \new{\frac{21}{20} t L_H |x-x'| + \frac{21}{40} t^2 L_H |v-v'|} 
   \label{eq:lem1_Frob_naxPhi2}  \\
  \|\na_v \Phi_t^2(z) - \na_v\Phi_t^2(z')\|_F  &\leqslant&  \frac{21}{40} t^2 L_H  |x-x'| + \frac{11}{30} t^3 L_H  |v-v'|  \label{eq:lem1_Frob_navPhi2} \,.
\end{eqnarray}
\end{lemma}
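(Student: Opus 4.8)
The plan is to obtain all the stated bounds from a single source: an exact integral representation of the Jacobian $\na \Phi_t(z)$ along the trajectory, from which both the $L^\infty$ estimate \eqref{eq:lem1_Et} and the Lipschitz-in-$z$ estimate (the $E_t'$ bound) will follow by the same kind of Gronwall/Picard argument. Differentiating the flow equation \eqref{eq:HD} in the initial condition, the matrix $J_s := \na \Phi_s(z)$ (with our row/column convention) solves the linear variational equation $\dot J_s = J_s A_s$ where $A_s$ is the (transpose of the) Hamiltonian vector field Jacobian, i.e. a $2d\times 2d$ block matrix with blocks $0, -\na^2 U(x_s), I_d, 0$ arranged so that $J_0 = I_{2d}$. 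Writing this in integrated form and iterating the Duhamel/Picard expansion twice, one gets $J_t = I + \int_0^t A_s \,\dd s + \int_0^t\!\!\int_0^s A_r A_s \,\dd r\,\dd s + (\text{remainder})$; the first two explicit terms are exactly $E_t$ (after collecting the $\int_0^t (t-s)\na^2 U(x_s)\dd s$ etc.\ from the iterated integral of the off-diagonal blocks), and the remainder is a triple time integral of a product of three $A$'s, hence bounded in operator norm by $\int_0^t \tfrac{s^2}{2}\|A\|^3 \dd s \le \tfrac{t^3}{6}e^t$ using $\|A_s\|\le \max(1,\|\na^2 U\|_\infty)\le$ something like $1$ under Assumption~\ref{hyp:main} with $L=1$ (a small bookkeeping point: one must check the constant $e^t$ swallows the higher Picard iterates, which it does since $t\sqrt L\le 1/4$).

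From \eqref{eq:lem1_Et} the four operator-norm bounds \eqref{eq:lem1_na_xPhi1}--\eqref{eq:lem1_na_vPhi2} are immediate: read off the four $d\times d$ blocks of $E_t$, bound $\|\int_0^t(t-s)\na^2 U(x_s)\dd s\|\le t^2/2$, $\|\int_0^t \na^2 U(x_s)\dd s\|\le t$, $\|\int_0^t s\na^2 U(x_s)\dd s\|\le t^2/2$ (using $\|\na^2 U\|\le 1$), and add the $\tfrac{t^3}6 e^t$ error. The trajectory-difference bounds \eqref{eq:lem1_Phi1z-z'}--\eqref{eq:lem1_Phi2z-z'} then follow by writing $\Phi_t^i(z)-\Phi_t^i(z') = \int_0^1 \na\Phi_t^i(z'+\tau(z-z'))\cdot(z-z')\,\dd\tau$ and applying the just-proved gradient bounds uniformly along the segment — so these cost essentially nothing beyond the first part.

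For the Lipschitz-in-$z$ statement, I would run the variational ODE for $J_s(z)$ and $J_s(z')$ in parallel and estimate $\|J_t(z)-E_t - (J_t(z')-E_t')\|_F$. Two error mechanisms appear: the Hessians $\na^2 U(x_s)$ and $\na^2 U(x_s')$ differ — controlled by Assumption~\ref{hyp:Frobenius}, $\|\na^2U(x_s)-\na^2U(x_s')\|_F\le L_H|x_s-x_s'|$, and $|x_s-x_s'|$ itself is controlled by \eqref{eq:lem1_Phi1z-z'} (hence $\lesssim |x-x'| + t|v-v'|$ on $[0,t]$, roughly $2|x-x'|+t|v-v'|$ after absorbing small-$t$ factors) — and the trajectories/Jacobians in the higher-order Picard terms differ, which is a lower-order effect because those terms already carry extra powers of $t$. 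Collecting: the difference of the linear (in $A$) terms gives $\int_0^t (\text{weight})\,\|\na^2U(x_s)-\na^2U(x_s')\|_F\,\dd s \lesssim L_H t^3 (2|x-x'|+t|v-v'|)$ with the weight being $t-s$, $1$, or $s$ depending on the block — but note these are exactly what got absorbed into $E_t$ versus $E_t'$, so what survives is precisely the difference coming from the \emph{second} Picard term and the remainder, each of which carries a $t^2$ or $t^3$ prefactor times $L_H\sup_s|x_s-x_s'|$, yielding the claimed $\tfrac{7}{20}L_H t^3(2|x-x'|+t|v-v'|)$ after tracking constants (the $7/20$ etc.\ are where the $1/4$ bound on $t\sqrt L$ and the explicit $e^t$ are squeezed). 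Finally \eqref{eq:lem1_Frob_naxPhi1}--\eqref{eq:lem1_Frob_navPhi2} are read off block-by-block from this Frobenius bound exactly as the operator-norm block bounds were read off from \eqref{eq:lem1_Et}, distributing the factor $\tfrac{7}{20}t^3(2|x-x'|+t|v-v'|)$ and, for the blocks that are $\int_0^t(\cdots)\na^2 U$, also adding the direct contribution $\int_0^t(\text{weight})\,L_H|x_s-x_s'|\dd s$.

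The main obstacle is bookkeeping the numerical constants: getting $7/20$, $21/40$, $11/30$, $21/20$ rather than sloppier bounds requires carefully using $t\le 1/4$ to replace each $e^t$, each $t^3/6\,e^t$, and each geometric-series tail by a clean rational multiple, and keeping the weights $(t-s)$, $s$, $1$ straight for the four blocks; there is no conceptual difficulty, but it is the step most prone to error, so I would set it up once as a lemma on "$\int_0^t$ of a weight times a Lipschitz-Hessian difference, plus a higher-order remainder" and then specialize four times.
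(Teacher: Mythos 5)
Your proposal is correct and is essentially the paper's argument: the paper also works with the variational equation $\partial_t\na\Phi_t(z)=\na\Phi_t(z)\,\na F(z_t)$, compares $\na\Phi_t$ with $E_t$ (showing $E_t$ solves the same linear ODE up to a remainder $R_t$ with $|R_t|\leqslant t^2/2$ and applying Gr\"onwall, which is an equivalent repackaging of your observation that $E_t$ is exactly the second Picard/Dyson iterate and of your tail bound $\sum_{k\geqslant 3}t^k/k!\leqslant \tfrac{t^3}{6}e^t$), and obtains the Lipschitz-in-$z$ part by the same mechanism applied to $A_t=\na\Phi_t(z)-E_t-(\na\Phi_t(z')-E_t')$, then assembles the block bounds by adding the direct $E_t-E_t'$ differences, exactly as you describe. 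The only blemish is your intermediate claim that the differences of the first-order (in $A$) terms are $\lesssim L_H t^3(\cdot)$ — they are of order $t$ and $t^2$ — but since, as you note, those terms cancel in $A_t$, this does not affect the argument.
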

To simplify high order terms in $t$ in the inequalities \eqref{eq:lem1_na_xPhi1}-\eqref{eq:lem1_Phi2z-z'}, we will often use the following bounds for $t\leqslant 1/4$:
\begin{equation}\label{eq:simplifier_t}
 1 + \frac{t^2}{2} + \frac{t^3}6 e^{t}    \leqslant   \frac{16}{15}\,,\qquad \frac{t^2}{2} + \frac{t^3}6 e^{t}\leqslant \frac{3}{5}t^2\,,  \qquad t + \frac{t^3}6 e^{t}   \leqslant \frac{71}{70} t\,. 
\end{equation}

\begin{proof}
  Differentiating the ODE $\partial_t \Phi_t(z) = F\po \Phi_t(z)\pf$ with $F(x,v)=(v,-\na U(x))$, we get that, for a fixed $z\in\R^d$, $t\mapsto \na \Phi_t(z)$ solves the matrix-valued time-inhomogeneous linear ODE
  \begin{equation}\label{ODEPhi}
  \partial_t \na\Phi_t(z) =\na \Phi_t(z)
 \na F(z_t)
\,,\qquad \Phi_0(z) = I_{2d}\,,\quad\text{with}\quad \na F(z_t) = \begin{pmatrix}
0 &   \  - \na^2 U(x_t) \\ 1    & 0
\end{pmatrix}
  \end{equation}
and $z_t=(x_t,v_t)=\Phi_t(z)$. Besides, $E_0=I_{2d}$ and
\begin{eqnarray*}
\partial_t E_t &=& \begin{pmatrix}
 -\int_0^t \na^2 U(x_s)\dd s & \ -\na^2 U(x_t) \\
1 & \ \new{-}t \na^2 U(x_t)
\end{pmatrix} \\
& = & \begin{pmatrix}
1   & \ -\int_0^t \na^2 U(x_s)\dd s \\
t & 1 
\end{pmatrix} \na F(z_t) \ = \ E_t \na F(z_t) + R_t
\end{eqnarray*}
with
\begin{eqnarray*}
R_t &=& \begin{pmatrix}
 \int_0^t(t-s)\na^2 U(x_s)\dd s & 0 \\
0 &  \int_0^t s \na^2 U(x_s)\dd s
\end{pmatrix} \na F(z_t)\\
& =&
\begin{pmatrix}
0 &  -\na^2 U(x_t) \int_0^t(t-s)\na^2 U(x_s)\dd s  \\
  \int_0^t s \na^2 U(x_s)\dd s  & 0
\end{pmatrix} \,,
\end{eqnarray*}
which can be bounded as $|R_t|\leqslant  t^2/2$. Then, for all $t\geqslant 0$,
\begin{eqnarray*}
|\na \Phi_t(z) - E_t | & \leqslant &  \int_0^t\po  |\po  \na \Phi_s(z) - E_s\pf \na F(z_s)| + |R_s|\pf  \dd s \ \leqslant \  \frac{t^3}{6} + \int_0^t |\na \Phi_s(z) - E_s | \dd s
\end{eqnarray*}
and the Grönwall's Lemma concludes the proof of the first part. The inequalities \eqref{eq:lem1_na_xPhi1} \eqref{eq:lem1_na_vPhi1}, \eqref{eq:lem1_na_xPhi2}, \eqref{eq:lem1_na_vPhi2}, \eqref{eq:lem1_Phi1z-z'}, \eqref{eq:lem1_Phi2z-z'} are then straightforward corollaries.

The second part of the lemma is similar. Recall that $\|AB\|_F \leqslant  \min(\|A\|_F |B|,|A|\|B\|_F)$ for all matrices $A,B$. 
    For $z,z'\in\R^{2d}$, writing $z_t'=(x_t',v_t')=\Phi_t(z')$ and defining $E_t'$ and $R_t'$ as $E_t$ and $R_t$ except that $x_t$ is replaced by $x_t'$, let
\[A_t = \na \Phi_t(z) - E_t - \new{\po \na \Phi_t(z') - E_t'\pf}\,.\]
Then
\begin{eqnarray*}
\|A_t\|_F & = &  \left\|\int_{0}^t \co A_s \na F(x_s) + (\na\Phi_s(z') - E_s') \po \na F(x_s')-\na F(x_s)\pf + R_s - R_s'\cf  \dd s\right\|_F\\
& \leqslant & \int_{0}^t \co \| A_s \|_F | \na F(x_s)| + |\na\Phi_s(z') - E_s'| \| \na F(x_s')-\na F(x_s)\|_{F} + \| R_s - R_s'\|_{F}\cf  \dd s\,.
\end{eqnarray*}
Using \eqref{ass:na2UFrob} and \eqref{eq:lem1_Phi1z-z'}, we bound 
\begin{eqnarray*}
\|\na F(x_t') - \na F(x_t)\|_F & =& \new{\|\na^2 U(x_t') - \na^2 U(x_t)\|_F}\\
 & \leqslant & 
  \po 1 + \frac{t^2}{2} + \frac{t^3}6 e^{t}  \pf L_H  |x-x'| + \po t + \frac{t^3}6 e^{t}\pf L_H  |v-v'| \\
\left\| \int_0^t s \po \na^2 U(x_s) - \na^2 U(x_s')\pf \dd s \right\|_F & \leqslant &   \po \frac{t^2}2+ \frac{t^4}{8} + \frac{t^5}{30} e^t\pf L_H |x-x'| + \po \frac{t^3}3 + \frac{t^5}{30} e^{t}\pf L_H  |v-v'| 
\end{eqnarray*} 
and
\begin{eqnarray*}
\lefteqn{\left\| \int_0^t(t-s)\po \na^2 U(x_t)\na^2 U(x_s) - \na^2 U(x_t')\na^2 U(x_s')\pf\dd s\right\|_F}\\
& \leqslant &   L_H|x-x'| \int_0^t(t-s) \po 2+ \frac{t^2}2 + \frac{t^3}{6} e^t + \frac{s^2}2 + \frac{s^3}{6} e^s\pf \dd s \\
& & +\ L_H|v-v'| \int_0^t(t-s) \po t+ \frac{t^3}{6} e^t + s + \frac{s^3}{6} e^s\pf \dd s \\
& \leqslant &    L_H|x-x'|\po t^2  + \frac{7t^4}{24}  + \frac{t^5}6 e^t  \pf  + L_H|v-v'|\po  \frac{2t^3}3   + \frac{t^5}6 e^t  \pf \,.
\end{eqnarray*}
to end up, \new{using moreover \eqref{eq:lem1_Et} to bound $|\na\Phi_s(z') - E_s'| $ and that  $|\na F(x_s)|\leqslant 1$,} with   
\begin{multline*}
\|A_t\|_F \ \leqslant \ \int_{0}^t    \| A_s \|_F\dd s +L_H |v-v'| \int_0^t\co     \frac{s^3}6 e^{ s}  \po s + \frac{s^3}{6}e^s \pf +  s^3 + \frac{s^5}{5}e^s    \cf  \dd s \\
    +L_H |x-x'| \int_0^t\co     \frac{s^3}6 e^{ s}  \po 1+ \frac{s^2}{2} + \frac{s^3}{6}e^s \pf +  \frac{3 s^2}2+ \frac{5s^4}{12} + \frac{7 s^5}{30} e^s    \cf  \dd s\,.
\end{multline*}
The Grönwall's Lemma then yields
\[\|A_t\|_F \leqslant k_1(t) L_H|x-x'| + k_2(t) L_H|v-v'|\]
where, using that $t\leqslant 1/4$,
\[\begin{array}{rclcl}
k_1(t) & =&  e^t \po     e^{ t}  \po  \frac{t^4}{24} + \frac{t^6}{72} + \frac{t^7}{252}e^t \pf +  \frac{t^3}2+ \frac{t^5}{12} + \frac{7 t^6}{180} e^t    \pf & \leqslant & \frac{7}{10} t^3 \\
k_2(t) &=& e^t \po     e^{ t}  \po \frac{t^5}{30}  + \frac{t^7}{252}e^t \pf +  \frac{t^4}4 + \frac{t^6}{30}e^s    \pf  & \leqslant & \frac{7}{20} t^4 \,.
\end{array}\]
The other inequalities then follow from the previous bounds (simplified by the fact $t\leqslant 1/4$) and, using \eqref{eq:lem1_Phi1z-z'}, 
\begin{eqnarray}
\lefteqn{\left\| \int_0^t (t-s) \po \na^2 U(x_s) - \na^2 U(x_s')\pf \dd s \right\|_F} \nonumber\\
 & \leqslant &   \po \frac{t^2}2+ \frac{t^4}{24}   + \frac{t^5}{120} e^t\pf L_H |x-x'| + \po  \frac{t^3}{6} + \frac{t^5}{120} e^{t}\pf L_H  |v-v'| \nonumber \\
 & \leqslant & \frac{21}{40} t^2 L_H |x-x'| + \frac{7}{40} t^3 L_H |v-v'|\,, \label{eq:naU(x_s)}
\end{eqnarray} 
\new{and, similarly,
\[
 \left\| \int_0^t  \po \na^2 U(x_s) - \na^2 U(x_s')\pf \dd s \right\|_F 
  \leqslant    \po t + \frac{t^3}{6}   + \frac{t^4}{24} e^t\pf L_H |x-x'| + \po  \frac{t^2}{2} + \frac{t^4}{24} e^{t}\pf L_H  |v-v'|  \,.
\]
}
 
\end{proof}

\begin{lemma}\label{lem:Hxx'}
Under Assumptions~\ref{hyp:main} and \ref{hyp:Frobenius} with $L=1$, the following holds. There exists a function $K\in\mathcal C^1 (\R^{2d},\R^d)$ such that, for all $x,x'\in\R^{d}$, $v=K(x,x')$ is the unique solution of $\Phi_t^1(x,K(x,x'))=x'$.  For $u_0,u_1\in\R^d$, consider the function $K_{u_0,u_1}\in\mathcal C^1(\R^{2d},\R^d)$ given by $K_{u_0,u_1}(x,v) = K(x+u_0,\Phi_t^1(x,v)+u_1)$, i.e. such that $v'=K_{u_0,u_1}(x,v)$  is the unique solution of  $\Phi_t^1(x+u_0,v')=\Phi_t^1(x,v)+u_1$. Then, for all  $x,v,u_0,u_1\in\R^d$, 
\begin{eqnarray}
|u_1-u_0+tv -t K_{u_0,u_1}(x,v)| &\leqslant&   \frac{3}{5} t^2 |u_0| + \frac15 t^2 |u_1-u_0|  \label{lem:Hxx'_eq1}\\
 \| \na_v K_{u_0,u_1}(x,v) - I_d \|_F & \leqslant & \frac{1}{5} t^2 L_H |u_0| + \frac{1}{10}t^3 L_H|u_1-u_0|  \label{lem:Hxx'_eq2}  \\
  \new{| \na_v K_{u_0,u_1}(x,v) - I_d |} & \new{\leqslant} & \new{\frac{1}{2} t^2 } \label{lem:Hxx'_eq3}  \\
 \| \na_x K_{u_0,u_1}(x,v) \|_F & \leqslant &  \frac35 t L_H |u_0| + \frac15 t^2 L_H |u_1-u_0|\label{lem:Hxx'_eq4}\\ 
  \new{| \na_x K_{u_0,u_1}(x,v) |} &\new{ \leqslant} & \new{ \frac65 t}  \,. \label{lem:Hxx'_eq5}
\end{eqnarray}
\end{lemma}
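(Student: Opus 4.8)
The plan is to establish, in order: existence, uniqueness and $\mathcal C^1$-regularity of $K$; the estimate \eqref{lem:Hxx'_eq1}; and then \eqref{lem:Hxx'_eq2}--\eqref{lem:Hxx'_eq5}. The basic tool is the representation obtained by integrating \eqref{eq:HD} twice,
\[\Phi_t^1(x,v)\ =\ x + tv - \int_0^t(t-s)\na U(x_s)\dd s\,,\qquad (x_s,v_s)=\Phi_s(x,v)\,,\]
whose derivatives in $v$ and in $x$ are $\na_v\Phi_t^1(z)-tI_d=-\int_0^t(t-s)\na_v\Phi_s^1(z)\,\na^2U(x_s)\dd s$ and $\na_x\Phi_t^1(z)-I_d=-\int_0^t(t-s)\na_x\Phi_s^1(z)\,\na^2U(x_s)\dd s$. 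For the first point, fix $x$ and look at $v\mapsto\Phi_t^1(x,v)$: by Assumption~\ref{hyp:main} --- equivalently by \eqref{eq:lem1_Et}, the $(2,1)$-block of $E_t$ being exactly $tI_d$ --- one has $|\na_v\Phi_t^1(z)-tI_d|\leqslant t^3e^t/6<t$ for $t\leqslant1/4$, so this map is bi-Lipschitz with constants $t\pm t^3e^t/6$ (using also \eqref{eq:lem1_na_vPhi1}), hence a homeomorphism onto an open and closed, thus full, subset of $\R^d$, and $\na_v\Phi_t^1(z)$ is invertible with $|(\na_v\Phi_t^1(z))^{-1}|\leqslant(t-t^3e^t/6)^{-1}$. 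Its inverse $K(x,\cdot)$ is then well defined and Lipschitz, and the implicit function theorem makes $(x,x')\mapsto K(x,x')$ of class $\mathcal C^1$; hence $K_{u_0,u_1}=K(x+u_0,\Phi_t^1(x,v)+u_1)$ is $\mathcal C^1$ and is the announced unique solution of $\Phi_t^1(x+u_0,v')=\Phi_t^1(x,v)+u_1$.

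For \eqref{lem:Hxx'_eq1}, write $v'=K_{u_0,u_1}(x,v)$, $(x_s,v_s)=\Phi_s(x,v)$, $(x_s',v_s')=\Phi_s(x+u_0,v')$, and subtract the representations of $\Phi_t^1(x+u_0,v')$ and $\Phi_t^1(x,v)$ using $\Phi_t^1(x+u_0,v')-\Phi_t^1(x,v)=u_1$; this gives the exact identity $u_1-u_0+tv-tv'=-\int_0^t(t-s)(\na U(x_s')-\na U(x_s))\dd s$. Since $\|\na^2U\|_\infty\leqslant1$ the integrand is at most $|x_s'-x_s|$, which \eqref{eq:lem1_Phi1z-z'} and \eqref{eq:simplifier_t} bound by $\tfrac{16}{15}|u_0|+\tfrac{71}{70}s|v-v'|$. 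Integrating, and then using $t|v-v'|\leqslant|u_1-u_0|+|u_1-u_0+tv-tv'|$ to bootstrap away the $|v-v'|$ contribution (which converges because the coefficient of $|v-v'|$ after integration is $O(t^2)$ and $t\leqslant1/4$), gives \eqref{lem:Hxx'_eq1}.

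For \eqref{lem:Hxx'_eq2}--\eqref{lem:Hxx'_eq5}, differentiate $\Phi_t^1(x+u_0,K_{u_0,u_1}(x,v))=\Phi_t^1(x,v)+u_1$ in $v$ and in $x$; the chain rule (in the paper's gradient convention, with $v'=K_{u_0,u_1}(x,v)$) gives
\[\na_v K_{u_0,u_1}(x,v)-I_d=(\na_v\Phi_t^1(x,v)-\na_v\Phi_t^1(x+u_0,v'))(\na_v\Phi_t^1(x+u_0,v'))^{-1}\]
and $\na_x K_{u_0,u_1}(x,v)=(\na_x\Phi_t^1(x,v)-\na_x\Phi_t^1(x+u_0,v'))(\na_v\Phi_t^1(x+u_0,v'))^{-1}$. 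For the crude bounds \eqref{lem:Hxx'_eq3}, \eqref{lem:Hxx'_eq5} I would bound the first factors by $2\sup_z|\na_v\Phi_t^1(z)-tI_d|$ and $2\sup_z|\na_x\Phi_t^1(z)-I_d|$ (read off \eqref{eq:lem1_Et}) and the inverse factor by $(t-t^3e^t/6)^{-1}$, then check the resulting scalar inequalities hold for $t\leqslant1/4$. For the $L_H$-estimates \eqref{lem:Hxx'_eq2}, \eqref{lem:Hxx'_eq4} the first factors must instead be bounded in terms of $|u_0|$ and $|v-v'|$, after which one substitutes $t|v-v'|\leqslant|u_1-u_0|+\tfrac35t^2|u_0|+\tfrac15t^2|u_1-u_0|$ from \eqref{lem:Hxx'_eq1}; this substitution is where the $|u_1-u_0|$ terms come from.

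The main obstacle I foresee is precisely the control, with good enough constants, of $\na_v\Phi_t^1(x,v)-\na_v\Phi_t^1(x+u_0,v')$ and $\na_x\Phi_t^1(x,v)-\na_x\Phi_t^1(x+u_0,v')$: the Frobenius--Lipschitz bounds \eqref{eq:lem1_Frob_naxPhi1}, \eqref{eq:lem1_Frob_navPhi1} of Lemma~\ref{lem:JacobPhi}, used as black boxes, are too lossy (they come from restricting to a sub-block a uniform bound on the full matrix $\na\Phi_t-E_t$). Instead I would estimate these differences directly from the differentiated representations above: subtract them at $(x,v)$ and $(x+u_0,v')$, bound the difference of the $\na^2U$ factors by $L_H|x_s-x_s'|$ (Assumption~\ref{hyp:Frobenius}, with $|x_s-x_s'|$ controlled by \eqref{eq:lem1_Phi1z-z'}), control the difference of the Jacobian factors by a Grönwall argument, and use \eqref{eq:lem1_na_xPhi1}, \eqref{eq:lem1_na_vPhi1} for the Jacobians of $\Phi_s^1$. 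Combined with the bound on $(\na_v\Phi_t^1(x+u_0,v'))^{-1}$ and the substitution from \eqref{lem:Hxx'_eq1}, this yields \eqref{lem:Hxx'_eq2} and \eqref{lem:Hxx'_eq4}. The rest is bookkeeping of higher-order-in-$t$ terms, applying \eqref{eq:simplifier_t} only where it does not waste constants.
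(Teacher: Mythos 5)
Your treatment of the preliminary parts is fine and, where it overlaps with the paper, essentially identical: the global invertibility of $v\mapsto\Phi_t^1(x,v)$ (identity plus contraction after dividing by $t$), the proof of \eqref{lem:Hxx'_eq1} by subtracting the two Duhamel representations of the positions, bounding $|x_s-x_s'|$ via \eqref{eq:lem1_Phi1z-z'} and bootstrapping, and the crude bounds \eqref{lem:Hxx'_eq3}, \eqref{lem:Hxx'_eq5} via the quotient identity $\na_v K_{u_0,u_1}-I_d=(\na_v\Phi_t^1(x,v)-\na_v\Phi_t^1(x+u_0,v'))(\na_v\Phi_t^1(x+u_0,v'))^{-1}$ (the constants do close for $t\leqslant 1/4$, provided you keep $t^2/2+t^3e^t/6$ rather than the rounded $3t^2/5$ for \eqref{lem:Hxx'_eq5}). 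Note however that the paper proves all the gradient bounds differently, by differentiating the integrated identity coming from \eqref{eq:Kuu}, i.e. $t(\na_v v'-I_d)=-\int_0^t(t-s)[\na_v g_s(x,v)-\na_v v'\,\na_v g_s(x+u_0,v')]\,\dd s$ with $g_s=\na U\circ\Phi_s^1$, and — contrary to your expectation — it invokes \eqref{eq:lem1_Frob_navPhi1} and \eqref{eq:lem1_Frob_naxPhi1} exactly as black boxes; those Frobenius bounds are not where precision is decided, so re-deriving them by a Gr\"onwall argument buys you only a constant.

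The genuine gap is in \eqref{lem:Hxx'_eq2} and \eqref{lem:Hxx'_eq4}, specifically in the $|u_1-u_0|$ terms, and it is a power of $t$, not a constant. In your quotient route the $|v-v'|$-part of the numerator is, sharply, of order $t^4L_H|v-v'|$ (resp. $t^3L_H|v-v'|$ for the $x$-derivative): the leading contribution is $-\int_0^t(t-s)\,s\,[\na^2U(x_s)-\na^2U(x_s')]\dd s$, so improving $7/20$ to the true $1/12$ changes nothing structurally. After multiplying by $(\na_v\Phi_t^1)^{-1}=O(1/t)$ and substituting $|v-v'|\lesssim |u_1-u_0|/t$ from \eqref{lem:Hxx'_eq1} (or \eqref{eq:Hxx'v-v}), you land on $O(t^2)L_H|u_1-u_0|$ for \eqref{lem:Hxx'_eq2} and $O(t)L_H|u_1-u_0|$ for \eqref{lem:Hxx'_eq4} — one power of $t$ short of the statement, and no sharpening within your plan can recover it. In the paper, the stated powers are generated at the step where \eqref{eq:Hxx'v-v} is substituted into the bounds for $\|\na_v g_s(x,v)-\na_v g_s(x+u_0,v')\|_F$ and $\|\na_x g_s(x,v)-\na_x g_s(x+u_0,v')\|_F$ inside the $(t-s)$-weighted integral, producing the intermediate coefficients $\tfrac{16}{15}s^2L_H|u_1-u_0|$ and $\tfrac{13}{12}sL_H|u_1-u_0|$; your proposal has no counterpart of this mechanism, so as written it does not prove the lemma as stated. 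Be aware, though, that this step deserves scrutiny: \eqref{eq:Hxx'v-v} controls $t|v'-v|$, not $|v'-v|$, and a one-dimensional test ($u_0=0$, $x=v=0$, $\na U(0)=0$, $\na^2U(y)=L_H y$ near the trajectory) gives $\na_v K_{0,u_1}(0,0)-1=\tfrac{1}{12}L_H t^2 u_1+O(u_1^2)$, i.e. exactly the order your route yields; so the honest output of your approach is a version of \eqref{lem:Hxx'_eq2} and \eqref{lem:Hxx'_eq4} with $t^2L_H|u_1-u_0|$ and $tL_H|u_1-u_0|$, and if you want the powers claimed in the statement you must either reproduce and justify the paper's substitution step or accept the weaker (and apparently sharp) bounds and track their effect downstream.
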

\begin{proof}
The existence and smoothness of $K$ can be established as in \cite{BouRabeeEberle} (recall that $t\sqrt{L} \leqslant 1/4$ under Assumption~\ref{hyp:main}), or directly by letting the step-size vanish in the results of \cite{BouRabeeEberle}.

Fix $x,v,u_0,u_1\in\R^{d}$. First, writing $v'=K_{u_0,u_1}(x,v)$, $x_s=\Phi_s^1(x,v)$ and $x_s'=\Phi_s^1(x+u_0,v')$ for $s\in[0,t]$, using that
 \begin{equation}\label{eq:Kuu}
 \begin{array}{lccl}
 & x_t +  u_1 &=& x + u_1 + t v - \int_0^t (t-s) \na U \po x_s \pf \dd s\\
= & x_t' &=& x + u_0 + t v' - \int_0^t (t-s) \na U \po x_s'\pf \dd s\,,
 \end{array} 
 \end{equation}
and  a computation similar to \eqref{eq:naU(x_s)} but using that $|\na U(x)-\na U(x')|\leqslant |x-x'|$ instead of $\|\na^2 U(x)-\na^2 U(x')\|_F \leqslant L_H|x-x'|$,  we get
\begin{eqnarray*}
t|v-v'| &\leqslant& |u_1-u_0|+   \frac{21}{40} t^2  |u_0| + \frac{7}{40} t^3  |v-v'|\,,
\end{eqnarray*}
 from which, using that $t\leqslant 1/4$ \new{to absorb the last term of the right hand side in the left hand side,}
 \begin{equation}\label{eq:Hxx'v-v}
 t|v' - v| \leqslant \frac{640}{633} |u_1-u_0| +  \frac{112}{211} t^2 |u_0|\,.  
 \end{equation}
 Reinjecting this in \new{the difference between the two lines of} \eqref{eq:Kuu},   we get
 \begin{eqnarray*}
 |u_1-u_0+tv - tv'|
  & \leqslant & \frac{21}{40} t^2  |u_0| + \frac{7}{40} t^3  |v-v'| \ \leqslant\  \frac{3}{5} t^2 |u_0| + \frac15 t^2 |u_1-u_0|\,.
\end{eqnarray*}  

Second,  writing $g_s(x,v) =  \na U \po \Phi_s^1 (x,v)\pf$, differentiating \new{the difference between the two lines of} \eqref{eq:Kuu} with respect to $v$ reads
\begin{eqnarray}\label{eq:navv'}
t\po \na_v v' - I_d \pf & =  & \new{-} \int_0^t (t-s)  \co \na_v g_s(x,v) - \na_v v' \na_v g_s(x+u_0,v') \cf \dd s \,.
\end{eqnarray}
Since $\na_v g_s(x,v) = \na_v \new{\Phi_s^1 (x,v)} \na^2 U(x_s)$, we get from \eqref{eq:lem1_na_vPhi1}, \eqref{eq:lem1_Phi1z-z'}, \eqref{eq:lem1_Frob_navPhi1} and Assumption~\ref{hyp:Frobenius} that, for all $x,v\in\R^{d}$ and $s\leqslant 1/4$,  $|\na_v g_s(x,v)| \leqslant s + s^36 e^s/6$  and
\begin{eqnarray*}
\lefteqn{\|\na_v g_s(x,v) - \na_v g_s(x+u_0,v')\|_F}\\
 & \leqslant & \|\na_v \new{\Phi_s^1 (x,v)} -\na_v \new{\Phi_s^1 (x',v')}\|_{F} |\na^2 U(x_s)|   + \po s + \frac{s^3}6 e^s\pf \|\na^2 U(x_s)-\na^2 U(x_s')\|_F \\
 & \leqslant &  \frac{7}{10} s^3 L_H |x-x'| + \frac{7}{20} s^4 L_H |v-v'| \\
 & & +\  L_H  \po s + \frac{s^3}6 e^s\pf \po \po 1 + \frac{s^2}{2} + \frac{s^3}6 e^{s}  \pf |x-x'| + \po s + \frac{s^3}6 e^{s}\pf |v-v'| \pf\\
 & \leqslant & \frac{10}{9} s L_H |u_0| + \frac{16}{15}s^2 L_H|u_1-u_0|\,,
\end{eqnarray*}
where we used \eqref{eq:Hxx'v-v} in the last line and that $s\leqslant 1/4$. Hence, from \eqref{eq:navv'}, we bound
\begin{eqnarray*}
t\| \na_v v' - I_d \|_F 
 &\leqslant&      \int_0^t(t-s)   \co \|\na_v g_s(x,v) - \na_v g_s(x+u_0,v')\|_F + \|\na v'-I_d\|_F |\na_v g_s(x+u_0,v')|  \cf \dd s \,, 
 \\
&\leqslant &\frac{5}{27} t^3 L_H |u_0| + \frac{4}{45}t^4 L_H|u_1-u_0| +   \po \frac{t^3}{6} + \frac{t^5e^t}{120} \pf  \| \na_v v' - I_d \|_F \,.
\end{eqnarray*}
 Using that $t\leqslant 1/4$, this yields \eqref{lem:Hxx'_eq2}. 
\new{Alternatively, to bound $  \na_v v' - I_d $ starting from \eqref{eq:navv'}, we can also simply use that $\|\na_v g_s\|_\infty \leqslant s  + s^3 e^{ s}/6 \leqslant 71 s/70$ to get 
\begin{eqnarray*}
t|\na_v v' - I_d | & \leqslant   & \int_0^t (t-s)  \co |\na_v g_s(x,v)| + (1+ |I_d - \na_v v'|) | \na_v g_s(x+u_0,v')| \cf \dd s \\
& \leqslant & \frac{71 t^3}{320} (2+ |I_d - \na_v v'|) \,,
\end{eqnarray*}
which, using that $t\leqslant 1/4$, gives \eqref{lem:Hxx'_eq3}.
}

Finally,  differentiating \eqref{eq:Kuu} with respect to $x$ leads to
\begin{equation*}\label{eq:naxv'}
t \na_x v' = \new{-} \int_0^t (t-s)  \co \na_x g_s(x,v) - \na_x g_s(x+u_0,v') -  \na_x v'  \na_v g_s(x+u_0,v') \cf \dd s \,. 
\end{equation*}
From \eqref{eq:lem1_na_xPhi1} and \eqref{eq:lem1_Frob_naxPhi1},
\begin{eqnarray*}
\lefteqn{\|\na_x g_s(x,v) - \na_x g_s(x+u_0,v')\|_F}\\
 & \leqslant & \|\na_x x_s -\na_x x_s'\|_{F} |\na^2 U(x_s)|   + \po 1+ \frac{s^2}2  + \frac{s^3}6 e^{s} \pf  \|\na^2 U(x_s)-\na^2 U(x_s')\|_F \\
& \leqslant &  \frac{7}{10} s^2 L_H |x-x'| + \frac{21}{40} s^3 L_H |v-v'| \\
 & & +\  L_H  \po 1+ \frac{s^2}2  + \frac{s^3}6 e^{s} \pf \po \po 1 + \frac{s^2}{2} + \frac{s^3}6 e^{s}  \pf |x-x'| + \po s + \frac{s^3}6 e^{s}\pf |v-v'| \pf\\
 & \leqslant & \frac{9}{8}L_H |u_0| + \frac{13}{12} s |u_1-u_0|
\end{eqnarray*}
where we used \eqref{eq:Hxx'v-v} in the last line and that $s\leqslant 1/4$. This yields
\[ t \|  \na_x v' \|_F \leqslant   \frac9{16} t^2 L_H   |u_0|  + \frac{13}{72} t^3     L_H |u_1-u_0| + \|\na_x v'\|_F    \po \frac{t^3}{6} + \frac{t^5e^t}{120} \pf  \,, \]
and then \eqref{lem:Hxx'_eq4} using that $t\leqslant 1/4$. 
\new{
As in the case of the gradient in $v$, alternatively, from \eqref{eq:naxv'}, we can bound $\na_x v'$  
using only that $\|\na_v g_s\|_\infty  \leqslant 71 s/70$  and $\|\na_x g_s\|_\infty  \leqslant 1+s^2/2+s^3 e^s/6 \leqslant 16/15 $  (from \eqref{eq:lem1_na_xPhi1} with \eqref{eq:simplifier_t}) to get 
\begin{eqnarray*}
t|\na_x v' | 
& \leqslant & \int_0^t (t-s)  \co |\na_x g_s(x,v)| + |\na_x g_s(x+u_0,v')| +  |\na_x v'|  |\na_v g_s(x+u_0,v')| \cf \dd s  \\
& \leqslant &   \frac{16}{15} t^2 + \frac{11 t^3}{6} |\na_x v' |   \,,
\end{eqnarray*}
hence \eqref{lem:Hxx'_eq5}.
}
\end{proof}

\section{Proofs of the main results}\label{sec:proof}

\subsection{The modified entropy dissipation}\label{sec:proof-hypoco}

In this section, implicitly,  we only consider initial conditions $\nu \in \mathcal P(\R^{2d})$ with $\nu \ll \mu$ (the results being  trivial otherwise). Besides, by density, we can assume that $\dd \nu / \dd \mu$  is $\mathcal C^1$, Lipschitz, bounded and lower bounded by a positive constant, and then it is readily checked that this is propagated by the transitions.

\subsubsection{Preliminary considerations}\label{sec:preliminary}

If $\nu$ has a density $h$ with respect to $\mu$ then the density $h_1$ of $\nu \P$  is given by the fact for  $f\in L^2(\mu)$,
\[\int_{\R^{2d}} f h_{1} \dd \mu = \int_{\R^{2d}} \P f h \dd \mu \,,\]
in other words $h_{1} = \P^* h$ where $\Q^*$ denotes the dual of an operator $\Q$ in $L^2(\mu)$. The \new{randomization} part is self-adjoint, i.e. $\D^* = \D$. For the Hamiltonian part, using that $\mu$ is invariant by the Hamiltonian flow, a change of variable yields
\[\int_{\R^{2d}} \H f h \dd \mu =\int_{\R^{2d}}   f \circ \Phi_t  h \dd \mu =\int_{\R^{2d}}   f  h \circ \Phi_t^{-1} \dd \mu \,,  \]
i.e. $\H^* h = h \circ \Phi_t^{-1}$.

\new{
\begin{remark}\label{eq:non-reversible}
Since $\Phi_t^{-1} = R\circ \Phi_t \circ R$ with $R(x,v)=(x,-v)$, $\H^* = \mathcal V \H \mathcal V$ with $\mathcal V h(x,v)= h(x,-v)$. In the case $\eta=0$, the chain forgets its velocity at each step, and thus we can equivalently say that the chain of position $(x_n)_{n\in\N}$ is the first marginal of a chain with transition $\D\P\D$. Since $\D \mathcal V = \mathcal V = \D$ if $\eta=0$, we get in that case that $(\D\P\D)^* = \D\P\D$, i.e. the chain is reversible. On the contrary, when $\eta>0$, the chain is non-reversible, even if we consider the transition $\D^{1/2} \P\D^{1/2}$. In that case, using that $\mathcal V \D = \D\mathcal V$, we still get that $(\D^{1/2} \P\D^{1/2})^*=\mathcal V\D^{1/2} \P\D^{1/2}\mathcal V$, namely the chain is reversible up to velocity reflection.
\end{remark}
}
Let us now compute the evolution of the relative entropy and of Fisher-like terms along the \new{randomization} and Hamiltonian transitions. In the following, we will repeatedly use the fact that, for a Markov operator $\Q$, a positive \new{function} $h$ and a matrix $A$, Jensen's inequality implies
\begin{equation}\label{eq:JensenFisher0}
|\Q A \na  h|^2 \ \leqslant \ \Q  \po \frac{|A\na h|^2}h\pf   \Q h\qquad \text{i.e.} \qquad  \frac{|\Q A \na  h|^2 }{\Q h} \ \leqslant \ \Q  \po \frac{|A\na h|^2}h\pf \,,
\end{equation}
\new{so that,} if $\mu$ is invariant by $\Q$, integrating this equality reads
\begin{equation}\label{eq:JensenFisher}
\int_{\R^{2d}} \frac{|\Q A \na  h|^2 }{\Q h} \dd \mu  \ \leqslant \ \int_{\R^{2d}}    \frac{|A\na h|^2}h  \dd \mu \,. 
\end{equation}
In fact, for   $\Q=\H^*$, which is deterministic, this is an equality, i.e. a change of variable yields
\begin{equation}\label{eq:JensenFisher_H}
\int_{\R^{2d}} \frac{|\H^* A \na  h|^2 }{\H^* h} \dd \mu  \ = \ \int_{\R^{2d}}    \frac{|A\na h|^2}h  \dd \mu \,. 
\end{equation} 

First, \new{considering the Ornstein-Uhlenbeck generator $L_{OU} = -v\na_v +  \Delta_v$ on $\R^{2d}$, whose associated semi-group $e^{sL_{OU}}$ is given by
\begin{equation}\label{eq:semigroupOU}
e^{sL_{OU}} f(x,v) = \mathbb E \po f(x, e^{-s} v +\sqrt{1-e^{-2s}} G)\pf\,,\qquad G\sim \mathcal{N}(0,I_d)\,,
\end{equation}
we see that}, if $\eta>0$, then $\D = e^{t_*L_{OU}}$ with  $t_*=-\ln\eta$. \new{ As a consequence,}  by a classical computation \new{(see e.g. \cite[Lemma 7]{MonmarcheGamma})}
\begin{eqnarray*}
\int_{\R^{2d}} \D h \ln \D h  \dd \mu - \int_{\R^{2d}}   h \ln   h  \dd \mu & = & 
 \int_0^{t_*} \partial_s \int_{\R^{2d}}   e^{s L_{OU}} h \ln   \po e^{s L_{OU}} h\pf  \dd \mu \dd s \\
&=& -   \int_0^{t_*} \int_{\R^{2d}}  \frac{|\na_v e^{s L_{OU}} h|^2}{e^{s L_{OU}} h} \dd \mu  \dd s\,.
\end{eqnarray*}
Then, for $s\in [0,t_*]$, denoting $\overline{h}_s = e^{s L_{OU}} h$, using \eqref{eq:JensenFisher0} \new{ and that $\na_v e^{u L_{OU}} h = e^{-u} e^{u L_{OU}}\na_v h$ for all $u\geqslant 0$ (as can be seen by differentiating \eqref{eq:semigroupOU}), }
\[\frac{|\na_v \D h|^2}{\D h } \ = \ \frac{|\na_v e^{(t_*-s)L_{OU}}\overline{h}_s|^2}{e^{(t_*-s)L_{OU}}\overline{h}_s } \ \leqslant \ e^{-2(t_*-s)}e^{(t_*-s)L_{OU}}\po \frac{|\na_v \overline{h}_s|^2}{\overline{h}_s} \pf\,.  \]
Using that $\mu$ is invariant for $e^{(t_*-s)L_{OU}}$, we get
\begin{equation}\label{eq:evolutionEntropieD}
\int_{\R^{2d}} \D h \ln \D h  \dd \mu - \int_{\R^{2d}}   h \ln   h  \dd \mu  \ \leqslant \ - \frac{\eta^{-2}-1}{2} \int_{\R^{2d}}  \frac{|B_v \na \D h|^2}{\D h} \dd \mu\quad \text{with}\quad B_v = \begin{pmatrix}
0 & 0 \\ 0 & 1
\end{pmatrix}    \,.
\end{equation}
If $\eta=0$, then $\D h(x,v) = \mathbb E \po h(x,G)\pf $, in particular  $\na_v \D h=0$ and by Jensen's inequality we can still say that \eqref{eq:evolutionEntropieD} holds in the sense that
\begin{equation}\label{eq:evolutionEntropieD_eta0}
\int_{\R^{2d}} \D h \ln \D h  \dd \mu - \int_{\R^{2d}}   h \ln   h  \dd \mu  \ \leqslant \ 0 \new{\ = \  - \frac{\tilde \eta^{-2}-1}{2} \int_{\R^{2d}}  \frac{|B_v \na \D h|^2}{\D h}}  \,,
\end{equation}
\new{for any $\tilde \eta>0$.}

For the Hamiltonian step, by a change of variable and using that $\mu$ is invariant by $\Phi_t$,
\begin{equation}\label{eq:evolutionEntropieH}
\int_{\R^{2d}} \H^*  h \ln \H^* h \dd \mu = \int_{\R^{2d}}  h \ln  h \dd \mu\,. 
\end{equation}

We proceed with the analysis of the evolution of the Fisher-like terms  of the form 
\[\int_{\R^{2d}} \frac{|A\na h|^2}{h}\dd \mu\]
where $A$ is some $2d\times2d$ matrix. Starting again with the \new{randomization} part, and denoting simply by $\alpha$ a $d\times d$ block $\alpha I_d$ in a matrix when there is no ambiguity, we see that
\begin{equation}\label{eq:Beta}
\na \po \D h\pf \ = \ \D B_\eta  \na h\,,\qquad\text{with}\qquad B_\eta  = \begin{pmatrix}
1 & 0 \\ 0 & \eta
\end{pmatrix}\,,
\end{equation} 
and thus, using \eqref{eq:JensenFisher},
\begin{equation*}
\int_{\R^{2d}} \frac{|A\na  \D h|^2}{\D h}\dd \mu   \leqslant  \ \int_{\R^{2d}} \frac{| A B_\eta \na   h|^2}{ h}\dd \mu \,.
\end{equation*}

For the Hamiltonian part, 
\begin{equation}\label{eq:Ct}
\na \H^* h = \na \po h \circ \Phi_t^{-1}\pf = \H^* J_t \na h
\end{equation}
with $J_t = (\na \Phi_t^{-1}) \circ \Phi_t $, and thus, using   \eqref{eq:JensenFisher_H},
\begin{equation*}
\int_{\R^{2d}} \frac{|A\na  \H^* h|^2}{\H^* h}\dd \mu    = \int_{\R^{2d}} \frac{| A J_t\na   h|^2}{ h}\dd \mu \,.
\end{equation*} 

\subsubsection{Entropy dissipation}

\new{
Before proceeding with the proof of Theorem~\ref{thm:dissipation_modified}, let us recall the strategy of the proof of \cite[Theorem 28]{Villani2009} (or \cite[Theorem 9]{MonmarcheGamma}) in continuous-time, which will shed some light on the proof in our discrete-time case. Denoting by $(T_s)_{s\geqslant 0}$ the semi-group associated to the kinetic Langevin diffusion \eqref{eq:kinLangevin} and by $h_s$ the relative density of $\nu T_s$ with respect to $\mu$ for some initial condition $\nu$. By a classical computation, we get the following entropy dissipation:
\[\partial_s \mathrm{Ent}(\nu T_s|\mu) = -\gamma \int_{\R^d} \frac{|\na_v h_s|^2}{h_s} \dd \mu\,, \]
(notice that in our case we get a discrete-time analogous of this by applying successively  \eqref{eq:evolutionEntropieD} and  \eqref{eq:evolutionEntropieH}). In the overdamped case \eqref{eq:overdLangevin}, we would have the full gradient in the right hand side which, thanks to the LSI, immediately yields an exponential decay of the entropy at rate $C_{LS}$. In the kinetic case, the $\na_x h_s$ part is missing in the entropy dissipation, which is thus $0$ at time $s=0$ for instance if, initially, the velocity is at equilibrium, and thus we cannot conclude. This is solved in \cite{Villani2009} by seeing that, when $|\na^2 U|$ is bounded,
\[\partial_s \int_{\R^d} \frac{|\na_x h_s + \na_v h_s|^2}{h_s} \dd \mu \leqslant - c \int_{\R^d} \frac{|\na_x h_s|^2}{h_s} \dd \mu + C \int_{\R^d} \frac{|\na_v h_s|^2}{h_s} \dd \mu\,,\]
for some $c,C>0$. Hence, for $a>0$,
\[\partial_s \co  \mathrm{Ent}(\nu T_s|\mu) + a \int_{\R^d} \frac{|\na_x h_s + \na_v h_s|^2}{h_s} \dd \mu \cf \leqslant - \max\po ca, \gamma - Ca\pf  \int_{\R^d} \frac{|\na h_s|^2}{h_s} \dd \mu \,.\]
Now, taking $a< \gamma/C$ and assuming a LSI for $\mu$, the right-hand side controls the modified entropy appearing in the left-hand side since 
\[\mathrm{Ent}(\nu T_s|\mu) + a \int_{\R^d} \frac{|\na_x h_s + \na_v h_s|^2}{h_s} \dd \mu \leqslant (C_{LS} + 2 a)  \int_{\R^d} \frac{|\na h_s|^2}{h_s} \dd \mu \,, \]
and thus we get the exponential decay of the modified entropy. The next proof follows this structure.

}

\begin{proof}[Proof of Theorem~\ref{thm:dissipation_modified}]

We assume that $L=1$ and consider $\mathcal L$ of the form
 \[\mathcal L\po \nu \pf \ = \ \Ent(\nu|\mu) + a \int_{\R^{2d}} \frac{| A \na h|^2 }{h}\dd \mu\,,\qquad h=\frac{\dd \nu}{\dd\mu}\,, \qquad A = \frac{1}{\sqrt{2}} \begin{pmatrix}
 1 & \new{1} \\ \new{1} & 1
 \end{pmatrix}\,, \]
for some $a>0$. 
 We have already computed (see \eqref{eq:evolutionEntropieD} and  \eqref{eq:evolutionEntropieH}) that, for $\eta>0$,
\begin{eqnarray*}
\mathcal L(\nu \P ) & \leqslant &\Ent(\nu|\mu)  - \frac{\eta^{-2}-1}{2}  \int_{\R^{2d}}  \frac{|\na_v \D h|^2}{\D h} \dd \mu  + a \int_{\R^{2d}} \frac{|A\na \P^* h|^2}{\P^* h}\dd \mu\\
& \leqslant &\mathcal L(\nu) - a\int_{\R^{2d}}\frac{|A\na  h|^2}{ h}\dd \mu   - \frac{\eta^{-2}-1}{2}  \int_{\R^{2d}}  \frac{|\na_v \D h|^2}{\D h} \dd \mu  + a \int_{\R^{2d}} \frac{|A\na \P^* h|^2}{\P^* h}\dd \mu\,.
\end{eqnarray*}
\new{When $\eta=0$, thanks to \eqref{eq:evolutionEntropieD_eta0}, the same inequality holds with the term $\eta^{-2}$ replaced by $\tilde\eta^{-2}$ for any $\tilde \eta>0$. In the rest of the proof, $\tilde \eta >0$ is arbitrary in the case $\eta=0$ and is $\eta$ if $\eta>0$. } 
 Using \eqref{eq:JensenFisher}  with $\D$ and \eqref{eq:Beta}, 
\[\int_{\R^{2d}} \frac{| A \na h|^2 }{h}\dd \mu \geqslant  \int_{\R^{2d}} \frac{| A \D \na h|^2 }{\D h}\dd \mu  =  \int_{\R^{2d}} \frac{| A B_{\new{\tilde \eta}}^{-1} \na \D h|^2 }{\D h}\dd \mu\,, \]
\new{where, in the case $\eta=0$, we used that $\na_v \D h = 0$.}
Using    \eqref{eq:Ct} and \eqref{eq:JensenFisher_H},
 \[
 \int_{\R^{2d}} \frac{|A\na \P^* h|^2}{\P^* h}\dd \mu = \int_{\R^{2d}} \frac{|\H^* A J_t  \na \D  h|^2}{\P^* h}\dd \mu\ = \int_{\R^{2d}} \frac{| A J_t \na\D  h|^2}{\D h}\dd \mu\,.\] 
At this point,  we have established that
\begin{eqnarray*}
\mathcal L(\nu \P ) & \leqslant &\mathcal L(\nu)  -   a \int_{\R^{2d}} \frac{\na\D  h   \cdot S \na\D  h}{\D h}\dd \mu
\end{eqnarray*}
where,  decomposing  
\[J_t(z) = \begin{pmatrix}
p(z) & q(z) \\ r(z) & s(z)
\end{pmatrix}\,,\]
the matrix $S(z)$ is given by
\begin{eqnarray*}
S &=& \frac{\new{\tilde \eta}^{-2}-1}{2a} B_v +  B_\eta^{-1}A^T AB_\eta^{-1} -  (A J_t)^TA J_t   \\
&=& \begin{pmatrix}
 1 -(p\new{+}r)^2 & -\new{\tilde \eta}^{-1} +(p\new{+}r)(s\new{+}q)  \\
-\new{\tilde \eta}^{-1} +(p\new{+}r)(s\new{+}q)  & \  \new{\tilde \eta}^{-2} - (\new{s+q})^2  + \frac{\new{\tilde \eta}^{-2}-1}{2a}
 \end{pmatrix}  \,.
\end{eqnarray*}
\new{Using \eqref{eq:lem1_Et} with $J_t = R[(\na \Phi_t(R \Phi_t(z))]R $, we get that, for all  $z\in\R^{2d}$, 
\[|J_t(z)  -  \tilde E_t  | \leqslant t^3e^{t}/6\]
 with
  \[ \tilde E_t = \begin{pmatrix}
1 - \int_0^t(t-s)\na^2 U(\tilde x_s)\dd s & \int_0^t \na^2 U(\tilde x_s)\dd s \\
- t & 1 - \int_0^t s \na^2 U(\tilde x_s)\dd s
\end{pmatrix}\,,\]
where $\tilde x_s = \Phi_s^1 (\Phi_t^1(z),-\Phi_t^2(z))$ for $s\in[0,t]$.
} 
Hence, using that $t\leqslant1/4$ and $\|\na^2 U\|_\infty\leqslant 1$, for all $z\in\R^{2d}$,
\begin{equation}\label{eq:sistronglyconvex}
\max(|p(z) - 1|,|s(z)-1|) \leqslant \frac{t^2}2 + \frac{t^3}{4} \,, \quad |r(z)\new{+}t| \leqslant  \frac{t^3}{4}  \,,\quad |q(z)|  \leqslant t +  \frac{t^3}{4}  \,,
\end{equation}
in particular $0 \leqslant p\new{+}r\leqslant 1 - t + t^2/2 + t^3/2$ and $0\leqslant s\new{+}q \leqslant 1 + t + t^2/2 + t^3/2$ and thus
\begin{eqnarray*}
 1 -(p\new{+}r)^2 & \geqslant & 2t - \frac{21}{10} t^2 \\
  - (\new{s+q})^2 & \geqslant & -    1 - 2t -  \frac{31}{10} t^2\\
 |\new{\tilde \eta}^{-1}-(p\new{+}r)(s\new{+}q)| &\leqslant& |\new{\tilde \eta}^{-1}-1|+ |s\new{+}q-1| + (s\new{+}q)|p\new{+}r-1|\\
 & \leqslant & \new{\tilde \eta}^{-1} - 1 +   2 t + \frac{13}{5} t^2 \\
 & \new{=} & \new{\new{\tilde \eta}^{-1}\po 1 + \tilde \eta \po -1 +   2 t + \frac{13}{5} t^2\pf  \pf }\,.
\end{eqnarray*}
Using this bounds in the expression of $S$ and recalling the definition of $m_1,m_2,m_3,\rho$ in Theorem~\ref{thm:dissipation_modified}, we get that, 
  for any $\theta>0$ and all $z\in\R^{2d}$, \new{$u,v\in\R^d$
  \begin{eqnarray*}\label{eq:thetaS}
 \begin{pmatrix}
 u \\ v
 \end{pmatrix} \cdot 
  S(z)\begin{pmatrix}
 u \\ v
 \end{pmatrix}  &\geqslant& \begin{pmatrix}
 |u| \\ |v|
 \end{pmatrix} \cdot  
 \begin{pmatrix}
m_1  & - \frac{m_2}{\new{\tilde \eta}} \\
- \frac{m_2}{\new{\tilde \eta}} & \frac{m_3}{\new{\tilde \eta}^2} 
\end{pmatrix}
 \begin{pmatrix}
 |u| \\ |v|
 \end{pmatrix}\\
  & \geqslant & \begin{pmatrix}
 |u| \\ |v|
 \end{pmatrix} \cdot  
   \begin{pmatrix}
m_1 - \frac{\theta m_2}{\new{\tilde \eta}} & 0 \\
0 & \frac{m_3}{\new{\tilde \eta}^2} - \frac{m_2}{\theta \new{\tilde \eta}}
\end{pmatrix}\begin{pmatrix}
 |u| \\ |v|
 \end{pmatrix} \,.
  \end{eqnarray*}
  }
Choosing the positive $\theta$ which makes the two coefficients equal we get
\[\frac{m_3}{\new{\tilde \eta}^2}-\frac{m_2}{\theta \new{\tilde \eta}} = m_1-\frac{\theta m_2}{\new{\tilde \eta}} =\frac{\rho}{1-3t}=:\rho'\,,\]
which is positive by assumption on $a$. We have thus obtained that
\[
\mathcal L(\nu \P ) - \mathcal L(\nu)  \leqslant   -  a \rho'  \int_{\R^{2d}} \frac{|\na\D h|^2}{\D h}\dd \mu = -  a \rho'  \int_{\R^{2d}} \frac{|\H^* \na \new{\D} h|^2}{\P^* h}\dd \mu \] 
where we have used \eqref{eq:JensenFisher_H}. Then, since \new{$ \na \P^* h  =  \na \H^*\D h  = \na \Phi_t^{-1} \H^*\na \D  h $ (from \eqref{eq:Ct}) so that $\H^*\na \D  h = (\na \Phi_t^{-1})^{-1}\na \P^* h$}, it remains to use     to bound 
\begin{eqnarray*}
|\P^* \na h|^2 & \geqslant & |(\na \Phi_t^{-1})^{-1}\na \P^* h|^2 \\
& \geqslant &  (1-t) |\na \P^* h|^2 \new{ + \po 1 -  \frac{1}{t}\pf}  |(\na \Phi_t^{-1})^{-1}-I_{2d}|^2||\na \P^* h|^2\\
        & \geqslant  & \po 1 - 3 t\pf |\na \P^* h|^2\,, 
\end{eqnarray*}
where we used \new{\eqref{eq:lem1_Et}, that $t\leqslant 1/4$, $\na \Phi_t^{-1} = (\na \Phi_t) \circ R$ and that, for any matrix $M$ with $|M-I|=\varepsilon <1$, $|M^{-1}-1| \leqslant \varepsilon/(1-\varepsilon)$}. Finally, using the log-Sobolev inequality \eqref{eq:logSob_h_bar} and that $|A|^2=2$, we see that
\[\mathcal L(\nu \P) \leqslant \po \max(C_{LS},1) +2a \pf \mathcal I(\nu P) \leqslant  \frac{\max(C_{LS},1) +2a }{(1-3t)\rho' a}\po \mathcal L(\nu) - \mathcal L(\nu P)\pf \,.\]
 The proof is thus concluded, since this can be written as
\[\mathcal L(\nu \P) \leqslant  \po 1+ \frac{ \rho }{\max(C_{LS},1)/a +2 }\pf^{-1}  \mathcal L(\nu)   \,,\]
\new{and we can let $\new{\tilde \eta}$ vanish in the case $\eta=0$.}
 
To get the simpler rough bound stated at the end of Theorem~\ref{thm:dissipation_modified}, simply use that $t\leqslant 1$, $\eta\in[0,1]$ and $1-\eta = \gamma t$ to bound
 \[
m_1  \geqslant  t  \,,\qquad
m_2 \leqslant t(\gamma   +  3) \,,\qquad m_3 \geqslant  t\po -3  + \frac{\gamma }{2a}\pf \,.
\]
Then, in \eqref{eq:thetaS}, simply take $\theta  = \eta m_1/(2m_2)$, so that the first coefficient is larger than $m_1/2 \geqslant t/2$ and the second coefficient is larger than
\[t\po -3  + \frac{\gamma }{2a}\pf\eta^{-2} - \frac{2t^2(\gamma+3)^2}{t}\eta^{-2} \geqslant \frac t2\]
if $-3+\gamma/(2a)-2(\gamma+3)^2 \geqslant 1/2$. Conclusion follows from $1-3t \geqslant 1/4$. 
\end{proof}

\subsection{Wasserstein/entropy regularization}\label{subsec:proof-W2Ent}

This section is devoted to the proof of Theorem~\ref{thm:Wasserstein/entropie}.  Recall that $\Q=\D \H \D$. We start by proving the following.

\begin{proposition}\label{prop:densitéW2}
Under Assumptions~\ref{hyp:main} and \ref{hyp:Frobenius} with $L=1$, for all $z=(x,v),z'=(x',v')\in\R^{2d}$, $\delta_z \Q$ admits a smooth positive density and
\begin{equation}\label{eq:propdensite1}
\Ent(\delta_z \Q |\delta_{z'} \Q) \leqslant \hat c_1(t) \po  |x-x'|^2 + \eta^2 t^2 |v-v'|^2\pf \quad\text{with}\quad \hat c_1(t) = \frac{13}{2t^2(1-\eta^2)} +\new{5} L_H^2 t^4\,.
\end{equation}
Similarly, if $\eta>0$ and $t\leqslant 1/8$, for all $n\geqslant 2$,  
\begin{equation}\label{eq:propdensite2}
\Ent(\delta_z \Q^n |\delta_{z'} \Q^n) \leqslant \hat c_{n}(t)    \po |x-x'|^2 + (\eta n't)^2 |v-v'|^2\pf
\end{equation}
where, \new{writing $s  =  t \min(n , \lfloor 1/(4t)\rfloor )$, }
\[\hat c_{n}(t) = \hat c_*(s):=\frac{ s}{\gamma  } \po       \frac{12}{ \eta s^2}  + \frac{6\gamma}{\eta s } +              4\pf^2  + \new{132}   (L_H s)^2\,.\]
\end{proposition}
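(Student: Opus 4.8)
The plan is to follow the change-of-variables strategy behind \cite[Lemma 16]{BouRabeeEberle}. Since $w\mapsto\Phi_t^1(x,w)$ is a $\mathcal C^1$-diffeomorphism of $\R^d$ with inverse $y\mapsto K(x,y)$ (Lemma~\ref{lem:Hxx'}), and the two randomization steps in $\Q=\D\H\D$ regularize the velocity, one can write down the density of $\delta_z\Q$ explicitly and compare it for two starting points. Precisely, writing $\delta_z\D=\delta_x\otimes\mathcal N(\eta v,(1-\eta^2)I_d)$, pushing forward by $\Phi_t$ and applying $\D$ once more, $\delta_z\Q$ has the smooth positive density (in $(y,u)\in\R^{2d}$)
\[p_z(y,u)\ =\ g_{\eta v}\po K(x,y)\pf\,\left|\det\na_y K(x,y)\right|\,g_{\eta\Phi_t^2(x,K(x,y))}(u)\,,\]
where $g_a$ is the density of $\mathcal N(a,(1-\eta^2)I_d)$. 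Disintegrating $\Ent(\delta_z\Q|\delta_{z'}\Q)$ along the position marginal $\rho_z$ of $\delta_z\Q$ (the pushforward of $\mathcal N(\eta v,(1-\eta^2)I_d)$ by $\Phi_t^1(x,\cdot)$), the conditional velocity laws are Gaussians with the common covariance $(1-\eta^2)I_d$, so
\[\Ent(\delta_z\Q|\delta_{z'}\Q)\ =\ \Ent(\rho_z|\rho_{z'})+\frac{\eta^2}{2(1-\eta^2)}\,\mathbb E_{Y\sim\rho_z}\left[\left|\Phi_t^2(x,K(x,Y))-\Phi_t^2(x',K(x',Y))\right|^2\right]\,.\]
The second term is bounded by \eqref{eq:lem1_Phi2z-z'} together with \eqref{lem:Hxx'_eq1} (which yields $|K(x,Y)-K(x',Y)|\leqslant|x-x'|/t+O(t|x-x'|)$), accounting for the $|x-x'|^2/(t^2(1-\eta^2))$ part of $\hat c_1(t)$; since $x$ and $x'$ differ only in positions, no $|v-v'|$ term arises here.

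For $\Ent(\rho_z|\rho_{z'})$ I change variables $Y=\Phi_t^1(x,w)$, $w\sim\mathcal N(\eta v,(1-\eta^2)I_d)$, set $w''=K(x',\Phi_t^1(x,w))$ and $\psi(w)=w''-w$, and use the algebraic identity $\det\na_y K(x,y)/\det\na_y K(x',y)=1/\det(I_d+\na_w\psi(w))$ (obtained by differentiating $w''=K(x',\cdot)\circ\Phi_t^1(x,\cdot)$ via the chain rule and the inverse function theorem). Combined with the Gaussian integration by parts $\mathbb E_w[(w-\eta v)\cdot\psi(w)]=(1-\eta^2)\mathbb E_w[\mathrm{tr}\,\na_w\psi(w)]$, this gives
\[\Ent(\rho_z|\rho_{z'})\ =\ \mathbb E_w\left[\frac{|\eta(v-v')+\psi(w)|^2}{2(1-\eta^2)}+\mathrm{tr}\,\na_w\psi(w)-\ln\det\po I_d+\na_w\psi(w)\pf\right]\,.\]
The crucial point is that the potentially dimension-dependent term $\mathrm{tr}\,\na_w\psi$ cancels exactly against the linear part of $-\ln\det(I_d+\na_w\psi)$: since $|\na_w\psi|\leqslant t^2/2$ by \eqref{lem:Hxx'_eq3}, one has $\mathrm{tr}\,M-\ln\det(I_d+M)\leqslant\|M\|_F^2$ (for $|M|$ small), and \eqref{lem:Hxx'_eq2} bounds $\|\na_w\psi\|_F\leqslant\frac{1}{5} t^2 L_H|x-x'|(1+\frac{t}{2})$, producing the dimension-free $L_H^2 t^4$ contribution; the remaining quadratic term is controlled by \eqref{lem:Hxx'_eq1}. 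Collecting constants with $t\leqslant1/4$ gives \eqref{eq:propdensite1}. The main subtlety of this step is precisely this integration-by-parts/log-determinant cancellation, which is what makes the one-step bound dimension-free.

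For the multi-step bound \eqref{eq:propdensite2}, I first reduce the number of steps: with $m=\min(n,\lfloor1/(4t)\rfloor)\geqslant2$ (using $t\leqslant1/8$) and $s=mt\leqslant1/4$, monotonicity of relative entropy under the Markov kernel $\Q^{n-m}$ gives $\Ent(\delta_z\Q^n|\delta_{z'}\Q^n)\leqslant\Ent(\delta_z\Q^m|\delta_{z'}\Q^m)$, so it suffices to prove the $m$-step estimate with constant $\hat c_*(s)$ and velocity weight $(\eta s)^2$. Here iterating the one-step bound through the last step only is too lossy (it gives a constant of order $t^{-2}$ instead of the required $s^{-3}$): one must use that over $m$ steps the position spreads by an amount of order $s$. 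Concretely, the density of $\delta_z\Q^m$ is again obtained by change of variables in the Gaussian driving noise (now using the diffeomorphisms $\Phi_t^1(X_{i-1},\cdot)$ at each step, and the fact that for $m\geqslant2$ the conditional law of the final velocity given all positions is common to both chains), and its log-ratio is estimated as above; equivalently, one couples the two chains by shifting the Gaussian refreshments of the $z'$-chain along an adapted minimal-energy ``control'' steering its state onto the $z$-chain's state after $m$ steps, so that the relative entropy of the endpoints is at most half the expected total squared shift (a discrete Girsanov/Cameron--Martin bound, combined with data processing since only the endpoint matters). Choosing the shift schedule to mimic the optimal control of the linearised damped double integrator $X_i\simeq X_{i-1}+tV_{i-1}$, $V_i\simeq\eta V_{i-1}+\sqrt{1-\eta^2}(\cdot)$ over $m$ steps produces both the $|x-x'|^2$ term (of order $(s/\gamma)(1/(\eta s^2))^2$ for small $s$, i.e. of order $1/(\gamma\eta^2 s^3)$) and the $(\eta s)^2|v-v'|^2$ term, while the Hessian-Lipschitz correction from linearising the iterated flow (handled as the log-determinant term above) gives the $(L_H s)^2$ piece; the cap at $\lfloor1/(4t)\rfloor$ steps keeps the accumulated Lipschitz constants of the iterated Hamiltonian flow (of order $e^{O(s)}$ by Lemma~\ref{lem:JacobPhi}) under control. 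The main obstacle in this part is organising the $m$-step estimate so as to get the sharp $s$-dependence uniformly in $t$ and in the dimension, rather than the crude $t$-dependence a naive iteration of the one-step bound would give.
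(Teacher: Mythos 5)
Your one-step argument is sound and is essentially the paper's computation reorganized: instead of applying the Gaussian entropy bound of \cite[Lemma 15]{BouRabeeEberle} to the joint noise map $\Psi_{z'}^{-1}\circ\Psi_z$ on $\R^{2d}$, you use the chain rule for relative entropy (position marginal plus conditional Gaussian velocities) and re-derive the shift-plus-log-determinant identity by Gaussian integration by parts; the quantitative inputs are the same estimates \eqref{lem:Hxx'_eq1}--\eqref{lem:Hxx'_eq5} and \eqref{eq:lem1_Phi2z-z'}, and the cancellation $\mathrm{tr}\,M-\ln\det(I+M)\leqslant\|M\|_F^2$ (valid for $|M|$ small, which \eqref{lem:Hxx'_eq3} guarantees) is exactly what makes the bound dimension-free in both proofs. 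So for \eqref{eq:propdensite1} the route is a legitimate, mildly different packaging of the same proof.

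For \eqref{eq:propdensite2}, however, what you give is a strategy statement rather than a proof, and the missing piece is precisely the heart of the paper's argument. The data-processing reduction to $m=\min(n,\lfloor 1/(4t)\rfloor)$ steps is fine, and "steer the refreshments of the $z'$-chain along a control merging the states in $m$ steps" is indeed the paper's scheme. But two things are only asserted. First, the clean statement "relative entropy of the endpoints is at most half the expected total squared shift" is not literally available here: to enforce that the difference of states equals a prescribed $y_k$ after each step through the nonlinear flow, the modified Gaussian $\bW_k$ must depend on the concurrent noise $\bG_k$ (see \eqref{eq:expressionWk+1}--\eqref{eq:expressionWk+1'}), so the map $\bG\mapsto\bW$ is not an adapted translation; one needs the full bound with the Jacobian term $\|\na_{\bG}\bW-I_{2dn}\|_F^2$, which is valid only under the operator-norm condition \eqref{eq:condition_n}, and checking that condition (Step 4 of the paper's proof) is part of the work, not a footnote. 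Second, the choice of the control is where the sharp $s^{-3}$ scaling comes from: a generic per-step shift carries a term of size $|u_{k+1}-u_k-\eta t w_k|/\sqrt{\gamma t^3}$ (see \eqref{eq:Wk+1Gk+1}), which would ruin the estimate unless the position increments are taken to be exactly $u_{k+1}=u_k+\eta t w_k$, with $w_k$ the explicit quadratic-in-$k$ interpolation of the paper; your "minimal-energy control of the linearised damped double integrator" names the right heuristic but does not exhibit this cancellation, nor the bookkeeping of the cross-derivatives $\na_{\bG_j}\bW_k$, $j<k$ (source of the $e^{O(s)}$ factors and of the $(L_Hs)^2$ term), nor why capping at $\lfloor 1/(4t)\rfloor$ steps keeps all of this under control. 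As it stands, the multi-step constants $\hat c_*(s)$ are claimed, not derived.
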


This result is interesting by itself, and calls for a few remarks.

\begin{itemize}
\item \new{As mentioned earlier,} a similar result is established along the proof of \cite[Lemma 16]{BouRabeeEberle} for the unadjusted HMC for $\eta=0$. In \cite{BouRabeeEberle}, only the case $n=1$ is considered (\emph{one-shot} coupling), but since in our case we are also interested in the Langevin regime $t\rightarrow 0$, $\eta = 1 - \gamma t$, we need to take $n$ of order $1/t$ so that the result do not degenerate at the limit. Besides, notice that, indeed, in our result, for $n=1$, if $\eta=0$ then the dependency on $|v-v'|$ vanishes, as it should, and we recover a result similar to the one of \cite{BouRabeeEberle}.
\item Contrary to Theorem~\ref{thm:Wasserstein/entropie} where, for simplicity, we take the distance $\mathcal W_{2}$ as a reference, in Proposition~\ref{prop:densitéW2} we have kept distinct the dependency in time of the contributions of $|x-x'|$ and $|v-v'|$. As expected in this kinetic settings, similarly to the hypoelliptic Langevin diffusion, for small $t$ (or $s$ in the second part) the scaling is $1/t^3$ (or $1/s^3$) for the position and $1/t$ (or $1/s$) for the velocity \new{(see e.g. \cite{GuillinWang,Herau2007} or \cite[Theorem 9]{MonmarcheGamma}).}
\item In view of the case $n=1$ and the proof, one can suspect that  the term $L_H^2 s^2$ in $c_*$ is not sharp and could maybe be replaced by $L_H^2 s^2 t^2$, which may be interesting in the Langevin scaling in cases where $L_H$ depends on the dimension, and would be consistent with  the results of \cite{GuillinWang} where $L_H$ is not involved for the Langevin diffusion. We do not address this question to alleviate the computations of the proof. 
\item In fact, it can be readily checked that the only information which is used on the flow $\Phi_t$ in the proof of Proposition~\ref{prop:densitéW2} are the estimates of Section~\ref{sec:prelimHamilton}. In particular, \new{the fact that $\mu$ is invariant by the Hamiltonian dynamics does not  intervene}. In particular, the conclusion of Proposition~\ref{prop:densitéW2}  holds if $\Phi_t$ is replaced by a function $\hat\Phi_t$ which satisfies estimates similar to those of Section~\ref{sec:prelimHamilton}. It is clear that this is the case of the Verlet Scheme, see in particular \cite{BouRabeeEberle} or \cite[Lemma 18]{MonmarcheHMCconvexe}. Thanks to Pinsker's inequality, we can thus use this result to get a total variation convergence from a Wasserstein convergence of the scheme. In particular, we can improve  \cite[Proposition 3]{MonmarcheSplitting} which is based on a one-shot coupling (i.e. $n=1$) for a splitting scheme of the Langevin diffusion and thus gives a poor bound as the step-size $\delta$ vanishes, by applying Proposition~\ref{prop:densitéW2} (with a Verlet scheme) with $n = 1/\delta$. This improves the dependency in $d$ of the efficiency bound in total variation given in \cite{MonmarcheSplitting}. Besides, combined with a $\mathcal W_1$ contraction, this also gives a local coupling condition for the splitting scheme in the sense that for all $R>0$ there exist $s$ such that for all $z,z'\in\R^{2d}$ with $|z|,|z'|\leqslant R$,
\[\|\new{\delta_{z}}\Q^{\lceil s/\delta\rceil} - \delta_{z'}\Q^{\lceil s/\delta\rceil}\|_{TV} \leqslant \frac12 \,,\]
where $s$ is explicit in terms of $R$, of the constants appearing in Proposition~\ref{prop:densitéW2} and of the $\mathcal W_1$ decay, as it is done in  \cite{BouRabeeEberle} with $\eta=0$ and $n=1$. Hence, for a particular splitting of the Langevin diffusion, this yields a result similar to \cite{DEMS} (with possibly slightly more explicit constants). This local coupling bound can then be transfered to the Metropolis-adjusted scheme, simply by using that there is a non-zero probability that all the moves of the Verlet scheme have been accepted during $\lceil s/\delta\rceil$ iterations (which may however be a rough way to estimate the convergence properties of the adjusted HMC).
\end{itemize}

\begin{proof}

\emph{\new{First part (proof of \eqref{eq:propdensite1})}.} For $z=(x,v) \in\R^d$, $\bG=(G,G')\in\R^{2d}$, let
\begin{equation}\label{eq:PsizG}
\Psi_{z}(\bG) = \begin{pmatrix}
1 &\  0 \\ 0 & \eta
\end{pmatrix} \Phi_t\po x , \eta v + \sqrt{1-\eta^2} G\pf + \sqrt{1-\eta^2} \begin{pmatrix}
0\\G'
\end{pmatrix}
\end{equation}
which is the state of the chain associated to the operator $\Q$, starting from $z$, after one transition, if $G$ and $G'$ are the variables used in the \new{randomization} steps. In particular, if $ \bG \sim \mathcal N(0,I_{2d})$ then the law of $\Psi_z(\bG)$ is $\delta_z \Q$.

Let us check that $\Psi_z$ is a diffeomorphism.  The equation $(X,Y) = \Psi_{z}(G,G')$ leads, considering first the velocities in this equality, to
\[G' = \frac{1}{\sqrt{1-\eta^2}} \po Y -  \eta \Phi_t^2\po x , \eta v + \sqrt{1-\eta^2} G\pf\pf \]
and, considering then the positions, to
\[X =   \Phi_t^1(x,\eta v + \sqrt{1-\eta^2} G)\,,\]
  in other words,
\[G =  \frac{1}{\sqrt{1-\eta^2}} \po K(x,X)- \eta v\pf\]
where $K$ is the function defined by the fact $\Phi^1_t(x,K(x,X)) =  X$, see Lemma~\ref{lem:Hxx'}.
This concludes since we have obtained that
\[\begin{pmatrix}
G \\ G' 
\end{pmatrix} = \Psi_z^{-1}\begin{pmatrix}
X\\ Y
\end{pmatrix} = \frac{1}{\sqrt{1-\eta^2}}  \begin{pmatrix}
  K(x,X)- \eta v \\
  Y -   \eta \Phi_t^2\po x , K(x,X) \pf 
\end{pmatrix}\,.\]
As a consequence, writing $j_z(y) = |\det(\na \Psi_{z}^{-1}(y)\new{)}|$, the law $\delta_z \Q$ admits a density $f_z$ given by
\[f_z(y) =  (2\pi)^{-d/2} j_z(y) e^{-|\Psi_z^{-1}(y)|^2\new{/2}}\,. \]
Denoting by $\rho$ the density of the standard Gaussian law $\mathcal N(0,I_{2d})$, 
\[\mathrm{Ent}(\delta_{z'} \Q |\delta_{z} \Q) = \int_{\R^{2d}} \ln \po \frac{f_{z'}}{f_{z}}\pf f_{z'} 
 =   \int_{\R^{2d}} \ln \po \frac{\rho}{g_{z,z'}}\pf \rho\]
with
\[g_{z,z'}(y) = \frac{f_{z}\po \Psi_{z'}(y)\pf \rho(y) }{f_{z'}\po \Psi_{z'}(y)\pf  } = |\det(\na  \Psi_{z'}(s))| f_{z}\po  \Psi_{z'}(s)\pf \]
which is the density of the image of $f_{z}$ by $\Psi_{z'}^{-1}$, namely is the image of $\rho$ by  $\Psi_{z'}^{-1}\circ \Psi_{z}=:\Psi_{z,z'}$. As established in the proof of \cite[Lemma 15]{BouRabeeEberle}, \new{ provided
\begin{equation}\label{eq:conditionPsi}
|\na \Psi_{z,z'}(\bG) - I_{2d}|\leqslant \frac12 \,,
\end{equation}
it holds
}
\begin{equation}\label{eq:entropie_kernel}
\int_{\R^{2d}} \ln \po \frac{\rho}{g_{z,z'}}\pf \rho \leqslant \mathbb E\po \frac12 |\Psi_{z,z'}(\bG)-\bG|^2 + \|\na \Psi_{z,z'}(\bG) - I_{2d}\|_F^2 \pf \,.
\end{equation}
 It remains to bound this expectation \new{and to establish \eqref{eq:conditionPsi}}.

 Since $\bW = (W,W'):= \Psi_{z,z'}(G,G')$ solves $\Psi_{z'}(W,W')=\Psi_{z}(G,G')$, we get that
\[W = \frac{1}{\sqrt{1-\eta^2}} \po H_{x,x'}\po \eta v+ \sqrt{1-\eta^2} G\pf - \eta v'\pf \]
where $H_{x,x'}$ is the function defined by the fact $\Phi_t^1(x',\new{H}_{x,x'}(v))=\Phi_t^1(x,v)$, i.e. $H_{x,x'}(v) = K_{x'-x,0}(x,v)$ with the notation of Lemma~\ref{lem:Hxx'}, 
and then
\begin{eqnarray*}
W' &=& G' + \frac{\eta}{\sqrt{1-\eta^2} } \co \Phi_t^2 \po x,\eta  v + \sqrt{1-\eta  ^2} G\pf - \Phi_t^2 \po x',\eta v' + \sqrt{1-\eta ^2} W\pf \cf \,.
\end{eqnarray*}
Using  \eqref{lem:Hxx'_eq1} and  that $t\leqslant 1/4$,
\begin{eqnarray}
|W-G| &=&  \frac{1}{\sqrt{1-\eta^2}} \left| H_{x,x'}\po \eta v+ \sqrt{1-\eta^2} G\pf - \eta v' - \sqrt{1-\eta^2} G\right|\nonumber \\
&=& \new{ \frac{1}{ \sqrt{1-\eta^2}} \left| K_{x-x',0}\po x, \eta v+ \sqrt{1-\eta^2} G\pf - \po \eta v + \sqrt{1-\eta^2} G\pf - \eta(v'-v)\right| } \nonumber \\
& \leqslant  &  \frac{1}{\sqrt{1-\eta^2}} \po  \po \frac 1t + \frac45 t\pf |x-x'| + \eta|v-v'|\pf \nonumber\\
& \leqslant  &  \frac{1}{\sqrt{1-\eta^2}} \po  \frac {21}{20t} |x-x'| + \eta|v-v'|\pf\,, \label{eq:G-W}
\end{eqnarray} 
and, thanks to \eqref{eq:lem1_Phi2z-z'} (with $t\leqslant 1/4$)
\begin{eqnarray}
|W' -G'| &=& 
 \frac{\eta }{\sqrt{1-\eta^2} }   \po \frac{15}{14}t   |x-x'| + \frac76  \left| \eta  v + \sqrt{1-\eta  ^2} G -\eta v' - \sqrt{1-\eta ^2} W  \right|\pf \nonumber\\
& \leqslant & \frac{\eta }{\sqrt{1-\eta^2} }   \po \frac{12}{7t}   |x-x'| + \frac73  \eta |v-v'|  \pf\,.\label{eq:G'-W'} 
\end{eqnarray}
Notice that 
\begin{equation}\label{eq:naG'W}
\na_{G'}W=0\,,\qquad \na_{G'} W'=I_d \,.
\end{equation}
Then, from \eqref{lem:Hxx'_eq2}, 
\begin{equation}\label{eq:naG-I}
\|\na_{G} W - I_{d}\|_F =\|\na  \new{K_{x-x',0}\po x, \eta  v+ \sqrt{1-\eta^2} G\pf} - I_d \|_F \leqslant \frac37 L_H t^2 |x-x'|\,,
\end{equation}
\new{while, using rather \eqref{lem:Hxx'_eq3},
\begin{equation}\label{eq:naG-I_pasF}
|\na_{G} W - I_{d}|   \leqslant   \frac12 t^2 \,.
\end{equation}
}
Then, we bound
\begin{eqnarray*}
\|\na_{G} W'\|_F & = & \eta   \left\| \na_v \Phi_t^2 \po x,\eta  v + \sqrt{1-\eta  ^2} G\pf -\na_{G}W \na_{v}\Phi_t^2 \po x',\eta v' + \sqrt{1-\eta ^2} W\pf   \right\|_F\\
& \leqslant &   \eta   \left\| \na_v \Phi_t^2 \po x,\eta  v + \sqrt{1-\eta  ^2} G\pf - \na_{v}\Phi_t^2 \po x',\eta v' + \sqrt{1-\eta ^2} W\pf   \right\|_F \\
& & \ + \|\na_G W-I_d\|_F |\na_{v}\Phi_t^2 \po x',\eta v' + \sqrt{1-\eta ^2} W\pf |\\
& \leqslant & \new{  \eta   \po \frac{21}{40} t^2 L_H  |x-x'| + \frac{11}{30} t^3 L_H  \po   \eta |v-v'| +  \sqrt{1-\eta^2}|G-W| \pf\pf }\\
&  & \new{  + \frac{16}{15} \|\na_G W-I_d\|_F\,,}   
\end{eqnarray*}
where we used \eqref{eq:lem1_na_vPhi2} (with \eqref{eq:simplifier_t}) and \eqref{eq:lem1_Frob_navPhi2}. 
\new{Hence, using the previous bounds \eqref{eq:G-W} on $|G-W|$ and  \eqref{eq:naG-I} on $\|\na_G W-I_d\|_F$,
\begin{eqnarray}
\|\na_{G} W'\|_F
 & \leqslant &   \eta   \po \frac{21}{40} t^2 L_H  |x-x'| + \frac{11}{30} t^3 L_H  \po  2 \eta |v-v'| +  \frac {21}{20t} |x-x'|  \pf\pf \nonumber\\
&  &    + \frac{16}{15} \times \frac37 L_H t^2 |x-x'|\nonumber\\
& \leqslant & \frac{7}{5} L_H t^2 |x-x'| + \frac{22}{30} t^3 \eta^2  L_H|v-v'|\,.\label{eq:naG'}
\end{eqnarray}

Alternatively, using \eqref{eq:lem1_Et} to see that 
\begin{equation}\label{eq:navPhi2z-z'}
| \na_v \Phi_t^2(z) -  \na_v \Phi_t^2(z')| \leqslant \frac{t^2}{2} + \frac{t^3}{6}e^t \leqslant \frac{3}{5}t^2
\end{equation}
for all $z,z'\in\R^{2d}$,
together with \eqref{eq:lem1_na_vPhi2} (with \eqref{eq:simplifier_t}) and \eqref{eq:naG-I_pasF}, we obtain
\begin{eqnarray}
|\na_{G} W'| 
& \leqslant &   \eta    \left| \na_v \Phi_t^2 \po x,\eta  v + \sqrt{1-\eta  ^2} G\pf- \na_{v}\Phi_t^2 \po x',\eta v' + \sqrt{1-\eta ^2} W\pf   \right|   \nonumber \\
& & \ + |\na_G W-I_d| |\na_{v}\Phi_t^2 \po x',\eta v' + \sqrt{1-\eta ^2} W\pf | \nonumber \\
& \leqslant & \frac{3}{5} \eta t^2  + \frac{8}{15} t^2 \ \leqslant \ \frac{17}{15}t^2\,. \label{eq:naGW'-pasF}  
\end{eqnarray}
}

\new{Gathering all these bounds concludes the proof of \eqref{eq:propdensite1}. Indeed, on the one hand, from \eqref{eq:naG'W}, \eqref{eq:naG-I_pasF}  and \eqref{eq:naGW'-pasF}, we get 
\[
|\na \Psi_{z,z'} - I_{2d}| \leqslant |\na_{G'} W| + |\na_{G} W - I_d| + |\na_{G'} W' - I_d| + |\na_G W'| \leqslant 2  t^2 \leqslant \frac{1}{8}\,,
\]
so that \eqref{eq:conditionPsi} holds. On the other hand, from \eqref{eq:G-W}, \eqref{eq:G'-W'}, \eqref{eq:naG'W}, \eqref{eq:naG-I} and \eqref{eq:naG'},
\begin{eqnarray*}
\lefteqn{\mathbb E\po \frac12 |\Psi_{z,z'}(\bG)-\bG|^2 + \|\na \Psi_{z,z'}(\bG) - I_{2d}\|_F^2 \pf}\\
 & \leqslant & \frac{1}{2(1-\eta^2)} \po  \frac {21}{20t} |x-x'| + \eta|v-v'|\pf^2
  + \frac{\eta^2 }{2(1-\eta^2) }   \po \frac{12}{7t}   |x-x'| + \frac73  \eta |v-v'|  \pf^2 \\
& & + \po \frac37 L_H t^2 |x-x'|\pf^2 + \po \frac{7}{5} L_H t^2 |x-x'| + \frac{22}{30} t^3 \eta^2  L_H|v-v'|\pf^2  \\
 & \leqslant & \po \frac{13}{2(1-\eta^2)t^2} + 5 L_H t^4 \pf     \po   |x-x'|^2 + \eta^2 t^2 |v-v'|^2\pf \,.
 \end{eqnarray*}
}

\bigskip

\emph{Second part \new{(proof of \eqref{eq:propdensite2})}. Step 1.}  We now turn to the $n$ steps case, assuming that $\eta>0$ and $t\leqslant 1/8$. Fix $z,z'\in\R^{2d}$. For $\bG=(\bG_1,\dots,\bG_n)$ i.i.d. standard Gaussian variables on $\R^{2d}$ (where we decompose $\bG_k = (G_k,G_k')\in\R^d\times\R^d$), denote by $\Psi_z^n(\bG)$ the state of a chain starting from $z$ after $n$ transitions, using the variables $(G_k,G_k')$ in the two \new{randomization} steps of the $k^{th}$ transition for $k\in\cco 1,n\ccf$. In other words, defining by induction $z_0=z$ and then $z_{k+1}=\Psi_{z_k}(\bG_{k+1})$, we have $\Psi_z^n(\bG)= z_{n}$.   Our goal is to define a function $\Psi_{z,z'}^n:\R^{2dn} \rightarrow \R^{2dn}$ in such  a way that $\bW = \Psi_{z,z'}^n(\bG)$ satisfies $\Psi_{z'}^n(\bW)=\Psi_z^n(\bG)$. There could be many ways to enforce this, for instance we could merge the two chains in one step, namely take $\bW_1=(W_1,W_1')$ as in the first part of the proof and then $\bW_k = \bG_k$ for all $k\geqslant 2$. However, for fixed $z,z'$, this would be a highly unlikely trajectory starting from $z'$ for small values of $t$, i.e. the law of $\bW_1$ would be far from a standard Gaussian law on $\R^{2d}$.

Let $y_0,\dots,y_n\in\R^{2d}$ be a fixed deterministic sequence which will be determined later on, with $y_0  = z'-z$ and $y_n=0$. We define the function $\Psi_{z,z'}^n$ by the fact $\bW = \Psi_{z,z'}^n(\bG)$ satisfies $\Psi_{z'}^k(\bW_1,\dots,\bW_k)=\Psi_z^k(\bG_1,\dots,\bG_k) + y_k$ for all $k\in\cco 1,n\ccf$.  In other words, $\bW=(\bW_1,\dots,\bW_n)$ is such that if two chains start respectively at $z$ and $z'$ and use respectively the variables $\bG_k=(G_k,G_k')$ and $\bW_k=(W_k,W_k')$ in the \new{randomization} steps of the $k^{th}$ transitions then after $k$ transitions the difference between the states of the two chains is $y_k$, for all $k\in\cco 0,n\ccf$.


Since $y_n=0$, this construction implies that $\Psi_{z'}^n(\bW) =\Psi_z^n(\bG)$ which, following the argument of the first part of the proof, implies that
\begin{equation}\label{eq:BorneGauss_n}
\Ent\po \delta_{z'} \Q^n |\delta_{z} \Q^n\pf  \leqslant  \mathbb E\po \frac12 |\Psi_{z,z'}^n(\bG)-\bG|^2 + \|\na \Psi_{z,z'}^n(\bG) - I_{2dn}\|_F^2 \pf\,,
\end{equation}
\new{provided
\begin{equation}\label{eq:condition_n}
|\na \Psi_{z,z'}^n(\bG) - I_{2dn}| \leqslant \frac12\,.
\end{equation}
It remains to bound the right hand side of \eqref{eq:BorneGauss_n} and to establish \eqref{eq:condition_n}.

In fact, in the case where $n\geqslant n_0 := \lfloor 1/(4t)\rfloor \geqslant 2$ (since $t\leqslant 1/8$), we choose $y_{n_0}=0$ (i.e. we merge the two chains in $n_0$ steps) and afterwards we take $\bW_k=\bG_k$ for all $k>n_0$ (so that $y_k=0$ for all $k>n_0)$. As a consequence, the indexes $k>n_0$ do not intervene in \eqref{eq:BorneGauss_n} and \eqref{eq:condition_n}. In other words, we have replaced the $n$-steps coupling by an $n_0$-steps coupling. Hence, without loss of generality, from now on we suppose that $n \in \cco 2, n_0\ccf $, and thus in particular $nt\leqslant 1/4$.
}

\medskip

\emph{Step 2.} In the rest of the proof, let $\bW=\Psi_{z,z'}^n(\bG)$ and, for conciseness, write $z_k'=\Psi_{z'}^k(\bW_1,\dots,\bW_k)$ the state of the a chain starting from $z'$ after $k$ iterations where the variables $\bW_j$ are used in the $j^{th}$ transition, $j\in\cco 1,k\ccf$. By design, $z_k' = z_k +y_k$. Write $z_k=(x_k,v_k)$, $z_k'=(x_k',v_k')$ and $y_k=(u_k,w_k)$.

Recall the notations of Lemma~\ref{lem:Hxx'}. Solving $\Psi_{z_k'}(W_{k+1},W_{k+1}') = \Psi_{z_k}(G_{k+1},G_{k+1}') + y_{k+1}$ yields, considering first the equality of the positions,
\begin{equation}\label{eq:expressionWk+1}
W_{k+1} = \frac{1}{\sqrt{1-\eta^2}} \po K_{u_k,u_{k+1}}\po x_k ,\eta v_k+ \sqrt{1-\eta^2} G_{k+1}\pf - \eta v_k'\pf
\end{equation}
and then, considering the equality of the velocities,
\begin{multline}\label{eq:expressionWk+1'}
W_{k+1}' = G_{k+1}'  \\
 +  \frac{1}{\sqrt{1-\eta^2} } \co  w_{k+1} + \eta \Phi_t^2 \po x_k,\eta  v_k + \sqrt{1-\eta  ^2} G_{k+1}\pf - \eta  \Phi_t^2 \po x_k',\eta v_k' + \sqrt{1-\eta ^2} W_{k+1}\pf \cf .
\end{multline}
 
First, thanks to \eqref{lem:Hxx'_eq1}, using that $1-\eta^2 \geqslant \gamma t$, $x_k'-x_k=u_k$ and $v_k'-v_k=w_k$,  
\begin{eqnarray}
\lefteqn{|W_{k+1}-G_{k+1}|}\nonumber\\
& \leqslant & \new{\frac{1}{\sqrt{\gamma t}} |K_{u_k,u_{k+1}}\po x_k ,\eta v_k+ \sqrt{1-\eta^2} G_{k+1}\pf -\eta v_k- \sqrt{1-\eta^2} G_{k+1} - \eta (v_k'-v_k)| } \nonumber\\
& \leqslant &  \frac{\sqrt{t}  }{5\sqrt{\gamma }} \po 3   |u_k| + |u_{k+1}-u_k|\pf   + \frac{1}{\sqrt{\gamma t^3 }}\left|u_{k+1}-u_k-\eta t w_k\right|\label{eq:Wk+1Gk+1}   \,.
\end{eqnarray}

 Second, using \new{that $\na U$ is $1$-Lipschitz, \eqref{eq:lem1_Phi1z-z'} and $t\leqslant1/4$, },
 \[|v_t - v - \new{(v_t' - v')}| \leqslant \int_0^t |\na U(x_s) - \na U(x_s')| \dd s \leqslant   \frac{19}{18} t |x-x'| +  \frac{15}{28} t^2 |v-v'| \]
 for any $(x_t,v_t)=\Phi_t(x,v)$ and $(\new{x_t'},v_t')=\Phi_t(x',v')$,  from which 
 \begin{eqnarray}
\lefteqn{|W_{k+1}' - G_{k+1}'|} \nonumber\\
 &=&   \frac{1}{\sqrt{1-\eta^2} } \left|  w_{k+1} + \eta \Phi_t^2 \po x_k,\eta  v_k + \sqrt{1-\eta  ^2} G_{k+1}\pf - \eta  \Phi_t^2 \po x_k',\eta v_k' + \sqrt{1-\eta ^2} W_{k+1}\pf \right| \nonumber \\
& \leqslant  &   \frac{1}{\sqrt{\gamma t} } \po |w_{k+1} - \eta^2 (v_k'-v_k)| + \eta   \frac{19}{18} t |x_k-x_k'| +  \frac{15}{28} t^2\eta^2 |v_k-v_k'|   \pf   +  \eta \po 1+ \frac{15}{28}t^2\pf   |G_{k+1}-W_{k+1}|\nonumber \\
& \leqslant  &   \frac{1}{\sqrt{\gamma t} } \po |w_{k+1} - w_k | +     \frac{19}{18} t |u_k| +  \po 2\gamma t +\frac{15}{28} t^2\eta^2 \pf  |w_k|  \pf    +    \eta \frac{127}{112}  |G_{k+1}-W_{k+1}|\,. \label{eq:Wk+1Gk+1bis}
\end{eqnarray}

\emph{Step 3.} Observing these first bounds  on $|W_{k+1}-G_{k+1}|$ and $|W_{k+1}' - G_{k+1}'|$, we can now fix $y_k=(u_k,w_k)$ for $k\in\cco 1,n-1\ccf$ to get a suitable scaling of the final estimate in the regime $t\rightarrow0$. We set 
\begin{eqnarray*}
w_k  &=&  \po 1 - \frac{k}{n}\pf \po v'-v\pf  - \frac{3k(n-k)}{(n^3 -n)\eta t } \po 2(x'-x)+ \eta t(n+1)(v'-v)\pf \\
&= & \po 1 - \frac{k}{n} - \frac{3k(n-k)}{n^2 -n }\pf \po v'-v\pf  - \frac{6k(n-k)}{(n^3 -n)\eta t } (x'-x)\,,
\end{eqnarray*}
which is designed so that
\[w_0 = v'-v\,,\qquad w_n= 0\,,\qquad \eta t \sum_{k=0}^{n} w_k =  x-x'\,.\]
Hence, setting  
\[u_k = x'-x + \eta t \sum_{j=0}^{k-1} w_j\,,\]
we get
\begin{equation}\label{eq:uk+1-uk}
u_0 = x'-x\,,\qquad u_n=0 \qquad\text{and}\qquad u_{k+1} = u_k + \eta t w_k\quad \forall k\in\cco 0,n-1\ccf\,.
\end{equation}
That way, the term of order $t^{-3/2}$ in \eqref{eq:Wk+1Gk+1} vanishes \new{(intuitively, this term is linked to unlikely variations of the position; our choice $u_{k+1} = u_k + \eta t w_k$ ensures that positions are simply driven by velocities, the latter being  directly controlled with some probability to follow the desired trajectory)}. Moreover, for all $k\in\cco 0,n\ccf$, using among other bounds that $k(n-k)/(n^2-n) \leqslant 1/2$ for all $n\geqslant 2$, $k\in\cco 0,n\ccf$,
\begin{equation}\label{eq:wkuk}
|w_k|\leqslant |v-v'| + \frac{3}{\eta n t } |x-x'|\,,\qquad |u_k|\leqslant |x-x'| + \eta n t |v-v'|
\end{equation}
and
\begin{eqnarray}
|w_{k+1}-w_k| & =  &  \left|\po \frac{1}{n} + \frac{3(n-2k-2)}{n^2 -n} \pf (v'-v) +  \frac{6(n-2k-2)}{(n^3 -n)\eta t }  (x'-x)  \right| \nonumber \\
&\leqslant & \frac{7}{n}|v-v'| + \frac{12}{n^2 \eta t}|x-x'|\,. \label{eq:w_k+1-wk}
\end{eqnarray}
With these choices, for $n\geqslant 2$, \new{\eqref{eq:Wk+1Gk+1} yields}
\begin{eqnarray}
|W_{k+1}-G_{k+1}|  
& \leqslant &  \frac{\sqrt{t}  }{5\sqrt{\gamma }} \co \po 3 + \frac3n \pf  |x-x'| +  \po  3 n +1 \pf \eta t |v-v'|\cf \nonumber \\
& \leqslant &  \frac{\sqrt{t}  }{10\sqrt{\gamma }} \co 9 |x-x'| +  7 \eta  nt |v-v'|\cf\,, \label{eq:Wk+1Gk+1Round2}
\end{eqnarray}
 and \new{\eqref{eq:Wk+1Gk+1bis} yields}
\begin{eqnarray*}
\lefteqn{|W_{k+1}'-G_{k+1}'|}\\
& \leqslant & \frac{\sqrt{t}}{\sqrt{\gamma } }\co     \frac{12}{n^2 \eta t^2} +  \frac{19}{18}  + \frac{3}{\eta n t }\po 2\gamma  +\frac{15}{28} t\eta^2   \pf  +    \frac{1143}{1120} \eta   \cf |x-x'| \\
& & +\  \frac{\sqrt{t}}{\sqrt{\gamma } }\co \frac7{nt} + \frac{19}{18}  \eta n t +  \po 2\gamma  +\frac{15}{28} t\eta^2 \pf  +    \frac{889}{1120} nt\eta   \cf  |v-v'| \\
& \leqslant & \frac{\sqrt{t}}{\sqrt{\gamma } }\co     \frac{12}{ \eta n^2t^2}  + \frac{6\gamma}{\eta n t } +              3   \cf |x-x'|  +   \frac{\sqrt{t}}{\sqrt{\gamma } }\co \frac7{\eta n^2t^2}   +  \frac{2\gamma}{\eta n t}+ \frac{11}5  \cf \eta n t  |v-v'| \\
\end{eqnarray*}
and then
\begin{eqnarray}
|\bG - \bW|^2  & = &  \sum_{k=1}^n \po  |W_k - G_k|^2 + |W_k'-G_k'|^2\pf \nonumber\\
&\leqslant &  \frac{ 2tn}{\gamma  } \po \frac{81}{100} + \co     \frac{12}{ \eta n^2t^2}  + \frac{6\gamma}{\eta n t } +              3   \cf^2\pf  |x-x'|^2\nonumber \\
& &+\  \frac{ 2tn}{\gamma  } \po \frac{49}{100} +\co \frac7{\eta n^2t^2}   +  \frac{2\gamma}{\eta n t}+ \frac{11}5  \cf ^2\pf \eta^2 n^2 t^2|v-v'|^2  \nonumber \\
& \leqslant & \frac{ 2tn}{\gamma  } \po       \frac{12}{ \eta n^2t^2}  + \frac{6\gamma}{\eta n t } +              4\pf^2  \po  |x-x'|^2 +\eta^2 n^2 t^2|v-v'|^2\pf   \label{eq:bGn}\,.
\end{eqnarray}

\new{
\emph{Step 4.} We now turn to the analysis of $\na_{\bG} \bW$. In this step, we focus on the operator norm in order to establish \eqref{eq:condition_n}. The study of the Frobenius norm to bound \eqref{eq:BorneGauss_n}, which follows similar computations,  will be addressed in Step 5 of the proof.

Recall the expressions \eqref{eq:expressionWk+1} and \eqref{eq:expressionWk+1'} for $W_k$ and $W_k'$.  First, for $k\geqslant 1$, 
\begin{equation}\label{eq:naGk'Wk'ok}
\na_{G_{k}'} W_{k}'=I_d\,,\qquad \na_{G_{k}'} W_{k}=0\,,
\end{equation}
and, using \eqref{lem:Hxx'_eq3}, 
\begin{equation}\label{eq:GkWk_bis}
|\na_{G_{k}} W_{k} - I_d|  \ = \   | \na_v K_{u_{k-1},u_{k}}\po x_{k-1},\eta v_{k-1}+ \sqrt{1-\eta^2} G_{k}\pf -   I_d| \ \leqslant \ \frac{t^2}{2} \, 
\end{equation}
 which, together with \eqref{eq:navPhi2z-z'} and \eqref{eq:lem1_na_vPhi2} (with \eqref{eq:simplifier_t}), gives 
\begin{eqnarray}
\lefteqn{|\na_{G_{k}} W_{k}'|}\nonumber\\
 &=&    \eta  \left |   \na_v \Phi_T^2 \po x_{k-1},\eta  v_{k-1} + \sqrt{1-\eta  ^2} G_{k}\pf -  \na_{G_{k}} W_{k} \na_v \Phi_T^2 \po x_{k-1}',\eta v_{k-1}' + \sqrt{1-\eta ^2} W_{k}\pf \right|\nonumber \\
& \leqslant & \frac{3}{5} t^2 +  \frac{16}{15} |\na_{G_{k}} W_{k} - I_d|  \ \leqslant \ \frac{17}{15} t^2\,.  \label{eq:GkWk'_bis} 
\end{eqnarray}

Since $\na_{\bG_j} \bW_k = 0$ if $j>k$, it remains to compute
\begin{equation}\label{eq:nabGjhop}
\na_{\bG_j} \bW_k = (\na_{\bG_j}z_{k-1}) \na_{z_{k-1}}\bW_k  
\end{equation}
 for $j<k$. On the one hand, using \eqref{lem:Hxx'_eq3}, \eqref{lem:Hxx'_eq5} 
 \begin{alignat}{4}
|\na_{v_{k-1}} W_k| &= \frac{\eta}{\sqrt{1-\eta^2}} \left| \na_v K_{u_{k-1},u_{k}}\po x_{k-1}, \eta v_{k-1}+ \sqrt{1-\eta^2} G_{k}\pf - I_d\right|  
& & \leqslant  \frac{t^{3/2}}{2\sqrt{\gamma}} \label{eq:navWk}\\
|\na_{x_{k-1}} W_k| &= \frac{1}{\sqrt{1-\eta^2}} \left|\na_x K_{u_{k-1},u_k}\po x_{k-1}, \eta v_{k-1}+ \sqrt{1-\eta^2} G_{k}\pf\right| 
& & \leqslant   \frac{6\sqrt{t}}{5\sqrt{\gamma}}\,.\label{eq:naxWk}
\end{alignat}
On the other hand, since $z_{j} = \Psi_{z_{j-1}}(\bG_{j})$, writing $A_j=\na_{z_{j-1}}\Psi_{z_{j-1}}(\bG_{j})$, for $j\leqslant k$,
\[\na_{\bG_j} z_{k} = \new{(} \na_{\bG_j} z_{j}\new{)} A_{j+1}\dots  A_{k} \,.\]
Differentiating \eqref{eq:PsizG} (recalling the notation $B_\eta$ from \eqref{eq:Beta}),
\begin{eqnarray*}
\na_G\Psi_{z}(G,G') &  =  & \sqrt{1-\eta^2} \na_v \Phi_t\po x , \eta v + \sqrt{1-\eta^2} G\pf B_\eta \\
\na_{G'}\Psi_{z}(G,G') &  =  &  \sqrt{1-\eta^2}  \begin{pmatrix}
 0 \\  I_d
\end{pmatrix}\\
\na_z\Psi_{z}(G,G') &  =  &  B_\eta \na \Phi_t\po x , \eta v + \sqrt{1-\eta^2} G\pf B_\eta  \,.
\end{eqnarray*}
Using \eqref{eq:lem1_Et} with $|\na^2 U|\leqslant 1$  and $t\leqslant 1/4$ yields   $|\na \Phi_t(z)|\leqslant 1+ t + t^2/2 + t^3 e^t/6 \leqslant 1+6 t /5$  
 and $|\na_v \Phi_t(z)|\leqslant 1+t^2/2 + t^3 e^t/6 \leqslant 16/15$ for all $z\in\R^{2d}$, so that
\begin{equation}\label{eq:bGj_bis}
|\na_{\bG_j} z_{k}| \leqslant \po1 + \frac65 t \pf^{k-j+1} \frac{16}{15}  \sqrt{1-\eta^2} \leqslant e^{6tn/5} \frac{16}{15} \sqrt{2\gamma t}\,,
\end{equation}
for $ 1\leqslant j\leqslant k\leqslant n$. Plugging this in \eqref{eq:nabGjhop} and then using \eqref{eq:navWk}, \eqref{eq:naxWk} yields, for $j<k$,
\[
|\na_{\bG_j} \bW_k| \ \leqslant \  e^{6tn/5} \frac{16}{15} \sqrt{2\gamma t}|\na_{z_{k-1}} \bW_k | \ \leqslant  \   e^{6tn/5} \frac{16}{15}   \sqrt{2 }\po \frac{t^2}{2} + \frac{6t}{5}\pf \leqslant  2 e^{6tn/5} t  \,.
\]

As a conclusion, this last inequality together with \eqref{eq:naGk'Wk'ok}, \eqref{eq:GkWk_bis}, \eqref{eq:GkWk'_bis} and the fact $\na_{\bG_j}\bW_k=0$ for $j>k$ yields   
\begin{eqnarray*}
| \na_{\bG} \bW - I_{2dn} |^2 & \leqslant  & \max_{k} \co |\na_{G_{k}} W_{k} - I_d|^2 + |\na_{G_{k}} W_{k}'|^2 +  \sum_{j=1}^{k-1} |\na_{\bG_j}\bW_k|^2\cf  \\
&\leqslant & \frac{t^4}{4} + \frac{17^2}{15^2} t^4 +  4 e^{12n t/5} n t ^2\,,
\end{eqnarray*}
which is less than $1/4$ if $nt \leqslant 1/4 $ (and thus $t\leqslant 1/8$), so that  \eqref{eq:condition_n} holds.
}

\medskip

\emph{Step 5.} \new{As announced above, the goal of this last step is to bound $\|\na_{\bG} \bW-I_{2dn}\|_F$.} Using now \eqref {lem:Hxx'_eq2}, and then \eqref{eq:uk+1-uk} and \eqref{eq:wkuk},
\begin{eqnarray}
\|\na_{G_{k}} W_{k} - I_d\|_F  &=&  \| \na_v K_{u_{k-1},u_{k}}\po x_{k-1},\eta v_{k-1}+ \sqrt{1-\eta^2} G_{k}\pf -   I_d\|_F \nonumber \\
&\leqslant & \frac{L_H t^2}{7} \po \frac15 |u_{k-1}|+\frac1{10} |u_k - u_{k-1}|\pf \nonumber  \\
& = & \frac{L_H t^2}{7} \po \po \frac15 + \frac3{10n}\pf |x-x'|+\eta t \po  \frac15 n+1\pf  |v-v'|\pf \nonumber\\
& = & L_H t^2 \po \frac12 |x-x'|+ \frac{7}{10}\eta nt |v-v'|\pf  \label{eq:GkWk-Id}
\end{eqnarray}
which, together with \eqref{eq:lem1_Frob_navPhi2} and \eqref{eq:lem1_na_vPhi2}   (with \eqref{eq:simplifier_t}), gives 
\begin{eqnarray}
\lefteqn{\|\na_{G_{k}} W_{k}'\|_F}\nonumber\\
 &=&    \eta  \left\|   \na_v \Phi_T^2 \po x_{k-1},\eta  v_{k-1} + \sqrt{1-\eta  ^2} G_{k}\pf -  \na_{G_{k}} W_{k} \na_v \Phi_T^2 \po x_{k-1}',\eta v_{k-1}' + \sqrt{1-\eta ^2} W_{k}\pf \right\|_F\nonumber \\
& \leqslant & \new{\frac{16}{15}} \eta  \|\na_{G_{k}} W_{k} - I_d\|_F +  \new{\frac{21}{40}}  L_H t^2 \eta |x_{k-1}-x_{k-1}'| \nonumber\\
& & \quad +\  \new{\frac{11}{30}}   L_H   t^3 |\eta  v_{k-1} + \sqrt{1-\eta  ^2} G_{k} - \eta v_{k-1}' - \sqrt{1-\eta ^2} W_{k}|\nonumber \\
& \leqslant &  \new{\frac{16}{15}} \eta  \|\na_{G_{k}} W_{k} - I_d\|_F +  \new{\frac{21}{40}} L_H t^2 \eta |u_{k-1}| +  \new{\frac{11}{30}} L_H   t^3\po  \eta|w_{k-1}| + \sqrt{2\gamma t} | G_{k}-W_{k}|\pf \nonumber\\
& \leqslant & \co  \new{\frac{16}{15}} \eta   \frac{L_H t^2}{2}  + \new{\frac{21}{40}} L_H t^2 \eta +  \new{\frac{11}{30}} L_H t^3 \po \frac{3}{ nt} + \frac{9\sqrt{2}  }{10} t\pf \cf  |x-x'|  \nonumber\\
& & + \co \new{\frac{16}{15}} \eta   \frac{7L_H t^2}{10}  +  \new{\frac{21}{40}} L_H t^2 \eta  +  \new{\frac{11}{30}}  L_H t^2 \po \frac{1}{n}     +   \frac{7 \sqrt{2}  }{10}  t \pf \cf \eta nt  |v-v'|  \nonumber\\
& \leqslant & \frac52 L_H t^2 |x-x'| + \frac32 L_H \eta nt^3  |v-v'|  \label{eq:GkWk'} \,,
\end{eqnarray}
\new{where we used \eqref{eq:wkuk} and \eqref{eq:Wk+1Gk+1Round2} to get the penultimate inequality (ant then $t\leqslant 1/8$, $n\geqslant 2$ in the last one).}

As in Step 4, since $\na_{\bG_j} \bW_k = 0$ if $j>k$, it now remains to bound 
\begin{equation}\label{eq:newnaG}
\new{\|\na_{\bG_j} \bW_k \|_F = \|\na_{\bG_j}(z_{k-1}) \na_{z_{k-1}}\bW_k\|_F \leqslant |\na_{\bG_j}(z_{k-1})| \| \na_{z_{k-1}}\bW_k\|_F } 
\end{equation}
 for $j<k$. We have already bounded $|\na_{\bG_j}(z_{k-1})|$ in \eqref{eq:bGj_bis}. From \eqref{lem:Hxx'_eq2} and \eqref{lem:Hxx'_eq4},
\begin{eqnarray*}
\|\na_{v_{k-1}} W_k\|_F &=& \frac{\eta}{\sqrt{1-\eta^2}} \left\| \na_v K_{u_{k-1},u_{k}}\po x_{k-1}, \eta v_{k-1}+ \sqrt{1-\eta^2} G_{k}\pf - I_d\right\|_F \\
& \leqslant & \frac{ L_H t^{3/2} }{\sqrt{ \gamma }} \po \new{\frac15} |u_{k-1}| + \new {\frac{t}{10}} |u_k-u_{k-1}| \pf\\
\|\na_{x_{k-1}} W_k\|_F &=& \frac{1}{\sqrt{1-\eta^2}} \|\na_x K_{u_{k-1},u_k}\po x_{k-1}, \eta v_{k-1}+ \sqrt{1-\eta^2} G_{k}\pf\|_F\\
& \leqslant & \frac{L_H \sqrt{t}}{\sqrt\gamma} \po \new{\frac{3}{5}} |u_{k-1}| + \new{\frac{t}{5}} |u_{k}-u_{k-1}|\pf\,,
\end{eqnarray*}
\new{from which, using \eqref{eq:uk+1-uk} and \eqref{eq:wkuk} (and $t\leqslant 1/8$, $n\geqslant 2$),
\begin{eqnarray}
\|\na_{z_{k-1}} W_k\|_F^2 &=& \|\na_{v_{k-1}} W_k\|_F^2 + \|\na_{v_{k-1}} W_k\|_F^2 \nonumber\\
& \leqslant & \frac{ L_H^2 t }{ \gamma }\co \po \new{\frac1{40}} |u_{k-1}| + \new {\frac{t}{80}} |u_k-u_{k-1}| \pf^2 + \po \new{\frac{3}{5}} |u_{k-1}| + \new{\frac{t}{5}} |u_{k}-u_{k-1}|\pf^2 \cf \nonumber \\
& \leqslant & \frac{ L_H^2 t }{2 \gamma }  \po  |x-x'| +  \eta nt |v-v'| \pf^2 \,.\label{eq:nazWk}
\end{eqnarray}
}
\new{Differentiating the expression \eqref{eq:expressionWk+1'} of $W_k'$ with respect to $z_{k-1}$ and using \eqref{eq:lem1_na_vPhi2} (with \eqref{eq:simplifier_t}),  \eqref{eq:lem1_Frob_naxPhi2} and  \eqref{eq:lem1_Frob_navPhi2} ,}
\begin{eqnarray*}
\lefteqn{\|\na_{z_{k-1}} W_k'\|_F}\\  
&=& \frac{\eta}{\sqrt{1-\eta^2} } \Big\|   B_\eta \na\Phi_t^2 \po x_{k-1},\eta  v_{k-1} + \sqrt{1-\eta  ^2} G_{k}\pf - B_\eta \na  \Phi_t^2 \po x_{k-1}',\eta v_{k-1}' + \sqrt{1-\eta ^2} W_{k}\pf \\
 &  & +\ \sqrt{1-\eta  ^2} \na_{z_{k-1}} W_{k} \na_v  \Phi_{\new{t}}^2 \po x_{k-1}',\eta v_{k-1}' + \sqrt{1-\eta ^2} W_{k}\pf \Big\|_F  \\
 & \leqslant & \new{ \frac{t L_H}{\sqrt{1-\eta^2}}\po  \frac{23}{20}  |u_{k-1}| + \frac{6}{10} t (\eta |w_{k-1}| + \sqrt{1-\eta^2}|G_k-W_k|)   \pf  + \frac{16}{15}\eta \|\na_{z_{k-1}} W_{k}\|_F  } \\
  & \leqslant & \new{ \frac{\sqrt{t} L_H }{\sqrt{\gamma }}\po  \frac{23}{20}  |u_{k-1}| + \frac{6}{10}  \eta t |w_{k-1}|   \pf + \frac{6}{10} t L_H |G_k-W_k|  + \frac{16}{15}\eta \|\na_{z_{k-1}} W_{k}\|_F  \,.}  
\end{eqnarray*}
From this, we use the bound \eqref{eq:Wk+1Gk+1Round2} on $|G_k-W_k|$,  \eqref{eq:nazWk} on  $\|\na_{z_{k-1}} W_{k}\|_F$  and \eqref{eq:wkuk} on $|u_{k-1}|,|w_{k-1}|$ (and that $t\leqslant 1/8$, $n\geqslant 2$ and $\eta\leqslant 1$ to simplify) to obtain 
\begin{equation*}
\|\na_{z_{k-1}} W_k'\|_F
  \ \leqslant\   3 \frac{\sqrt{t} L_H }{\sqrt{\gamma }}   \po |x-x'| + \eta n t  |v-v'|\pf     \,. 
\end{equation*}
Plugging this together with \eqref{eq:bGj_bis} and  \eqref{eq:nazWk}  in \eqref{eq:newnaG} yields
\begin{eqnarray*}
\|\na_{\bG_j} \bW_k \|_F & \leqslant &  |\na_{\bG_j}(z_{k-1})| \po \| \na_{z_{k-1}}W_k\|_F + \| \na_{z_{k-1}}W_k'\|_F\pf \\
& \leqslant &  6 e^{6tn/5}  L_H t    \po |x-x'| + \eta n t  |v-v'|\pf\,,
\end{eqnarray*}
for all $j<k$. We can now conclude, from this last inequality together with \eqref{eq:naGk'Wk'ok}, \eqref{eq:GkWk-Id}, \eqref{eq:GkWk'} and the fact $\na_{\bG_j}\bW_k=0$ for $j>k$, that 
\begin{eqnarray}
\| \na_{\bG} \bW- I_{2dn}\|_F^2 & = & \|\na_{G_k} W_k - I_d\|_F^2 + \|\na_{G_k} W_k'\|_F^2 + \sum_{k=1}^n \sum_{j=1}^{k-1} \|\na_{\bG_j}\bW_k\|^2_F \nonumber \\
& \leqslant & L_H^2   \co  \po \frac{49}{100} + \frac{25}{4}\pf t^4  + 36 e^{12 tn/5}   (nt)^2   \cf  \po  |x-x'|+ \eta nt |v-v'|\pf^2 \nonumber \\
&\leqslant & 132 (L_H nt)^2    \po |x-x'|^2 + (\eta nt)^2 |v-v'|^2\pf \,,\label{eq:conclusionbG}
\end{eqnarray}
where we used that $nt \leqslant 1/4$.  

\new{
\emph{Conclusion.} Since \eqref{eq:condition_n} holds, the proof is concluded by using  \eqref{eq:bGn} and \eqref{eq:conclusionbG} in \eqref{eq:BorneGauss_n}.
  }
\end{proof}

\begin{proof}[Proof of Theorem~\ref{thm:Wasserstein/entropie}]
 For a positive smooth bounded $h$ and  $z,z'\in\R^{2d}$, denoting by $f_{z,n}$ the density of $\delta_z \Q^n$, by Young's inequality, we obtain the following Harnack inequality:
\begin{eqnarray*}
\Q^n \ln h(z') 
& = & \int_{\R^{2d}} \ln h(y) \frac{f_{z',n}(y)}{f_{z,n}(y) }f_{z,n}(y)   \dd y \\
& \leqslant  &  \ln \int_{\R^{2d}}  h(y) f_{z,n}(y)  \dd y  + \int_{\R^{2d}} \ln \po \frac{f_{z',n}(y) }{f_{z,n}(y) } \pf f_{z',n}(y) \new{\dd y} \\
& = &  \ln \Q^n h(z) + \Ent(\delta_{z'} \Q^n |\delta_{z} \Q^n)\,.
\end{eqnarray*}
The last term is bounded by $c_n(\eta,t)|z-z'|^2$ thanks to Proposition~\ref{prop:densitéW2}. The rest of the proof follows \cite{RocknerWang}. Applying the previous inequality with $h$ replaced by $\dd (\nu\Q^n)/\dd \mu$, i.e. $h= (\Q^n)^* h_0$ with $h_0 = \dd \nu/\dd \mu$, 
\[\Q^n \ln (\Q^n)^* h_0 (z') \leqslant \ln \Q^n (\Q^n)^*  h_0(z) + c_n(t)|z-z'|^2\,.\]
Integrating this inequality with respect to $(z',z)\sim \pi$ where $\pi$ is a coupling of $\nu$ and $\mu$, we get
\[\Ent(\nu \Q^n|\mu)  \leqslant \int_{\R^{2d}} \ln \Q^n (\Q^n)^*  h_0  \dd \mu + c_n(t)\int_{\R^{2d}}|z-z'|^2\pi(\dd z',\dd z) \leqslant c_n(t)\int_{\R^{2d}}|z-z'|^2\pi(\dd z',\dd z)\,,\]
where we used Jensen's inequality. Taking the infimum over all couplings concludes the proof.

\end{proof}

\subsection{Adaptation of the proof for the two extensions}

\subsubsection{General entropies}\label{subsec:Gentropie}

 For general $G$-entropies, the function $\Psi(u,v)=G''(u) |v|^2$ being convex on $\R_+\times\R^d$, for any Markov operator $\Q$,
\[\Q\Psi(h,A\na h) \geqslant \Psi(\Q h,\Q A\na h)\,.\]
Integrating by $\mu$ if $\mu$ is $\Q$-invariant, we end up with 
\begin{equation}\label{eq:22avecG}
\int_{\R^{2d}} G''(\Q h) |\Q A\na h| ^2 \dd \mu \leqslant \int_{\R^{2d}} G''(h) |A\na h| ^2 \dd \mu\,,
\end{equation}
which generalizes \eqref{eq:JensenFisher}. The proof of Theorem~\ref{thm:dissipation_modified_G} is then exactly the proof of Theorem~\ref{thm:dissipation_modified} except that all quantities of the form $\int |B\na h|^2 /h \dd \mu$ are replaced by $\int G''(h) |B\na h|^2 \dd \mu$ and \eqref{eq:22avecG} is used whenever \eqref{eq:JensenFisher} was used. For the derivation of the $G$-entropy dissipation along the Ornstein-Uhlenbeck semi-group, see \cite{BolleyGentil} or \cite[Lemma 7]{MonmarcheGamma}.

\subsubsection{Random step-size}\label{subsec:random}

Reasoning as in Section~\ref{sec:preliminary}, denoting by $h$ and $h_1$ the respective density of $\nu$ and $\nu \P_\theta$ with respect to $\mu$, we see that 
\[h_1  = \P^*_\theta h = \mathbb E_{\theta} \po \H_T^*\D h\pf\,,\qquad T\sim \theta\,.\]
By the Jensen inequality,
\[\int_{\R^{2d}} G \po \mathbb E_\theta \po \H_T^*\D h\pf \pf  \dd \mu \leqslant \mathbb E_\theta \co  \int_{\R^{2d}}  G\po \H_T^*\D h\pf\dd \mu  \cf\,,\]
and, as in the previous section,
\[\int_{\R^{2d}} G''\po \mathbb E_\theta \po \H_T^*\D h\pf \pf |\mathbb E_\theta  A\na  \po \H_T^*\D h\pf|^2 \dd \mu \leqslant \mathbb E_\theta \co  \int_{\R^{2d}}  G''(\H_T^*\D h)|A\na \H_T^*\D h|^2\new{\dd \mu}\cf\,.\]
As a consequence, 
\[\mathcal L_G(\nu \P_\theta) \leqslant \mathbb E_\theta \co \mathcal L_G\po \nu \D\H_T\pf\cf\,.\]
It is thus sufficient to work conditionally to a fixed $T=t$, and applying Theorem~\ref{thm:dissipation_modified} yields Theorem~\ref{thm:dissipation_modified_G_random}.

\subsection{\new{The strongly convex case}}\label{subsec:convexcase}

\new{
Proposition~\ref{prop:convex} is a corollary of the following general result. In the next statement, $G$ and $\mathcal I_G$ are as in Section~\ref{subsc:G-entropy}.

\begin{proposition}\label{prop:general_convex}
Let $\mathcal R$ be a Markov transition operator on $\R^d$ and $\nu_*$ be an invariant measure of $\mathcal R$.  Denote by $\mathcal R^*$ the adjoint of $\mathcal R$ in $L^2(\nu_*)$ (which is a Markov transition operator).
\begin{enumerate}
\item  Assume that there exists $\kappa\geqslant0$ such that, for all $y,y'\in\R^d$, there exist a random variable $(Y,Y')$ with $Y \sim \delta_y \mathcal R^*$, $Y'\sim \delta_{y'} \mathcal R^*$ and, almost surely,
\begin{equation}\label{eq:YY'couple}
|Y-Y'|^2 \leqslant  \kappa |y-y'|^2\,.
\end{equation}
Then, for all  $\nu\in\mathcal P(\R^d)$,
\begin{equation}\label{eq:IGconvex_general}
\mathcal I_G(\nu \mathcal R|\nu_*) \leqslant \kappa \mathcal I_G(\nu|\nu_*)\,.
\end{equation}
\item  Assume that there exists $\kappa,\geqslant0$ such that, for all $y,y'\in\R^d$, there exist a random variable $(Y,Y')$ with $Y \sim \delta_y \mathcal R^*$, $Y'\sim \delta_{y'} \mathcal R^*$, and 
\begin{equation}\label{eq:YY'couple2}
\mathbb E\po |Y-Y'|^2 \pf  \leqslant  \kappa |y-y'|^2\,.
\end{equation}
Then, for all  $\nu\in\mathcal P(\R^d)$, \eqref{eq:IGconvex_general} holds in the particular case $G(u)=(u-1)^2$.
\end{enumerate}

\end{proposition}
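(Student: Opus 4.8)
The plan is to prove both parts at once through a pointwise gradient estimate for $\mathcal R^* h$, where $h=\dd\nu/\dd\nu_*$ (if $\nu\not\ll\nu_*$ then $\mathcal I_G(\nu|\nu_*)=+\infty$ and there is nothing to prove). By a routine density argument, in the spirit of the reduction at the beginning of Section~\ref{sec:proof-hypoco} (approximating $h$ in $L^1(\nu_*)$, so that $\nu\mapsto\nu\mathcal R$ converges in total variation and $\mathcal I_G$ is lower semicontinuous), it suffices to treat the case where $h$ is $\mathcal C^1$, bounded, bounded below by a positive constant, with $\na h$ bounded and Lipschitz. I first record two elementary facts: since $\mathcal R 1=1$, the measure $\nu_*$ is also invariant for the Markov operator $\mathcal R^*$; and, by definition of the adjoint together with this invariance, the density of $\nu\mathcal R$ with respect to $\nu_*$ is exactly $\mathcal R^* h$, so that $\mathcal I_G(\nu\mathcal R|\nu_*)=\int_{\R^d} G''(\mathcal R^* h)\,|\na\mathcal R^* h|^2\,\dd\nu_*$.

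The core step avoids differentiating $\mathcal R^* h$ directly and works with finite differences, which fits the metric-space definition of $|\na\cdot|$. Fix $y$ and, for $y'$ near $y$, let $(Y,Y')$ be a coupling as in the statement, with $Y\sim\delta_y\mathcal R^*$, $Y'\sim\delta_{y'}\mathcal R^*$. Since $\mathcal R^* h(y)=\mathbb E[h(Y)]$ and likewise for $y'$, writing $h(Y')-h(Y)=\int_0^1\na h(Y+t(Y'-Y))\cdot(Y'-Y)\,\dd t$ and using the Lipschitz bound on $\na h$ gives
\[
\big|\mathcal R^* h(y')-\mathcal R^* h(y)\big|\ \leqslant\ \mathbb E\big[\,|Y'-Y|\,|\na h(Y)|\,\big]\ +\ \mathrm{Lip}(\na h)\,\mathbb E\big[|Y'-Y|^2\big]\,.
\]
In case (i), the almost sure bound $|Y'-Y|\leqslant\sqrt\kappa\,|y-y'|$ and the fact that $\mathrm{Law}(Y)=\delta_y\mathcal R^*$ independently of $y'$ turn the right-hand side into $\sqrt\kappa\,|y-y'|\,\mathcal R^*(|\na h|)(y)+O(|y-y'|^2)$; dividing by $|y-y'|$ and letting $y'\to y$ yields $|\na\mathcal R^* h|(y)\leqslant\sqrt\kappa\,\mathcal R^*(|\na h|)(y)$ for every $y$. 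In case (ii) one instead bounds $\mathbb E[|Y'-Y|\,|\na h(Y)|]$ by Cauchy--Schwarz and uses $\mathbb E[|Y'-Y|^2]\leqslant\kappa\,|y-y'|^2$, which gives $|\na\mathcal R^* h|^2(y)\leqslant\kappa\,\mathcal R^*(|\na h|^2)(y)$.

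Case (ii) is then immediate: with $G(u)=(u-1)^2$ one has $G''\equiv2$, so integrating the last inequality against $\nu_*$ and using its $\mathcal R^*$-invariance,
\[
\mathcal I_G(\nu\mathcal R|\nu_*)=2\int_{\R^d}|\na\mathcal R^* h|^2\,\dd\nu_*\ \leqslant\ 2\kappa\int_{\R^d}\mathcal R^*\big(|\na h|^2\big)\,\dd\nu_*\ =\ 2\kappa\int_{\R^d}|\na h|^2\,\dd\nu_*\ =\ \kappa\,\mathcal I_G(\nu|\nu_*)\,.
\]
For case (i) I would invoke the convexity lemma behind the hypothesis on $G$ from Section~\ref{subsc:G-entropy} (cf.\ \cite{BolleyGentil}): if $\psi:=1/G''$ is positive and concave, then $\Phi:(u,p)\in\R_+\times\R_+\mapsto p^2/\psi(u)=G''(u)\,p^2$ is jointly convex (the determinant of its Hessian equals $-2p^2\psi''/\psi^3\geqslant0$). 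Applying Jensen's inequality to $\Phi$ against the probability measure $\mathcal R^*(y,\cdot)$, and then the estimate $|\na\mathcal R^* h|(y)\leqslant\sqrt\kappa\,\mathcal R^*(|\na h|)(y)$,
\[
G''(\mathcal R^* h(y))\,|\na\mathcal R^* h(y)|^2\ \leqslant\ \kappa\,\Phi\!\big(\mathcal R^* h(y),\,\mathcal R^*(|\na h|)(y)\big)\ \leqslant\ \kappa\,\mathcal R^*\!\big(G''(h)\,|\na h|^2\big)(y)\,,
\]
and integrating against $\nu_*$ gives $\mathcal I_G(\nu\mathcal R|\nu_*)\leqslant\kappa\,\mathcal I_G(\nu|\nu_*)$.

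The algebra here is short; the genuine work is in the justifications. The main obstacle is the density reduction: one must approximate a general $\nu$ by measures whose densities are smooth with bounded, bounded-below values and Lipschitz gradient in a way compatible with both $\mathcal I_G$ and the map $\nu\mapsto\nu\mathcal R$; phrasing the core estimate as a bound on $|\mathcal R^* h(y')-\mathcal R^* h(y)|$ before any differentiation is precisely what makes the limit harmless, the Lipschitz regularity of $\na h$ being used only to absorb the $O(|y-y'|^2)$ remainder. The other ingredient, the joint convexity of $(u,p)\mapsto G''(u)\,p^2$ under ``$1/G''$ concave'', is classical. Proposition~\ref{prop:convex} then follows by applying the above to $\mathcal R=\P_\theta^{\,n}$ on $\R^{2d}$ with $\nu_*=\mu$, after checking that a parallel coupling of $\P_\theta$ with contraction $\kappa_n$ produces the required coupling for $\delta_y(\P_\theta^{\,n})^*$ (the Hamiltonian flow being deterministic and invertible, and velocity reflection an isometry, as in Remark~\ref{eq:non-reversible}).
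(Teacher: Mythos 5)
Your proposal is correct and follows essentially the same route as the paper: derive the pointwise gradient commutation estimate $|\na \mathcal R^* h| \leqslant \sqrt{\kappa}\, \mathcal R^*|\na h|$ (resp.\ its squared form under the $L^2$ coupling) from the coupling hypothesis, then conclude by Jensen's inequality for the jointly convex map $(u,p)\mapsto G''(u)p^2$ and the invariance of $\nu_*$ under $\mathcal R^*$. The only difference is cosmetic: the paper obtains the gradient estimate via the difference-quotient function $G_r$ of \eqref{eq:defGr} and dominated convergence, which needs $h$ only bounded and Lipschitz, whereas your Taylor-expansion argument additionally requires $\na h$ Lipschitz and hence leans a bit more on the (standard, but unproven here) density reduction you invoke.
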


\begin{proof}
First, from   \cite[Proposition 3.1]{Kuwada1}, \eqref{eq:YY'couple} implies that
\begin{equation}\label{eq:naRf}
 |\na \mathcal R^* f| \leqslant \sqrt{\kappa} \mathcal R^*|\na f|
\end{equation}
for all bounded Lipschitz functions $f$ on $\R^d$. For the reader's convenience, we recall the short proof of this. Fix  such $f$ and, for $r>0$ and $z\in\R^d$, let
\begin{equation}\label{eq:defGr}
G_r(z) \ = \ \sup_{y\in\mathcal B(z,r)\setminus\{z\}} \frac{|f(y)-f(z)|}{|y-z|}\,,
\end{equation}
so that for all $y,z\in\R^d$,
\[|y-z|\leqslant r \qquad \Rightarrow \qquad |f(y)-f(z)|\leqslant r G_r(z)\,.\]
Fix   $y,y'\in \R^d$ and consider $Y \sim \delta_y \mathcal R^*$, $Y'\sim \delta_{y'} \mathcal R^*$ such that \eqref{eq:YY'couple} holds almost surely. Then, we bound 
\begin{eqnarray*}
\left| \mathcal R^* f(y) - \mathcal R^* f(y')\right|  & \leqslant & \mathbb E \po |f(Y)-f(Y')|\pf \\ & \leqslant &   \sqrt{\kappa} |y-y'|\mathbb E \po G_{\sqrt{\kappa}|y-y'|}(Y)\pf \ = \ \sqrt{\kappa}|y-y'| \mathcal R^*\po  G_{\sqrt{\kappa}|y-y'|}\pf (y)\,.
\end{eqnarray*}
Since $f$ is Lipschitz, $G_r$ is uniformly bounded by  $\|\na f\|_\infty$, and by the dominated convergence theorem we get the convergence of  $\mathcal R^*\po  G_{\sqrt{\kappa}|y-y'|}\pf (y)$ toward $\mathcal R^*(|\na f|)(y)$ as $y'\rightarrow y$, which concludes the proof of \eqref{eq:naRf}.

Similarly, the weaker condition \eqref{eq:YY'couple2} gives 
\begin{equation}\label{eq:naRf2}
 |\na \mathcal R^* f|^2 \leqslant \kappa \mathcal R^*\po |\na f|^2\pf
\end{equation}
for all bounded Lipschitz functions $f$ on $\R^d$. Indeed, now, we use the Cauchy-Schwarz inequality to bound
\[
\left| \mathcal R^* f(y) - \mathcal R^* f(y')\right|^2   \leqslant  \mathbb E \po |Y-Y'|^2\pf \mathbb E \po G_{|Y-Y'|}^2(Y)\pf  
 \leqslant    \kappa |y-y'|^2\mathbb E \po G_{|Y-Y'|}^2(Y)\pf\,.\]
The condition \eqref{eq:YY'couple2} implies that $Y'\rightarrow Y$ almost surely as $y'\rightarrow y$, and thus we conclude as before to get \eqref{eq:naRf2}.

The conclusion then follows from Jensen's inequality. Indeed, assuming that $\nu\ll \nu_*$ (the result being trivial otherwise), writing $h=\dd \nu/\dd \nu_*$ so that $\mathcal R^* h$ is the density of $\nu \mathcal R$ with respect to $\nu_*$, using \eqref{eq:naRf}
\begin{eqnarray*}
\mathcal I_G(\nu \mathcal R|\nu_*) &=& \int_{\R^d} G''(\mathcal R^* h) |\na \mathcal R^* h|^2\dd \nu_* \\
&\leqslant & \kappa \int_{\R^d} G''(\mathcal R^* h) \po \mathcal R^*|\na   h|\pf^2\dd \nu_* \\
& \leqslant & \kappa \int_{\R^d} G''( h) |\na  h|^2\dd \nu_* \,.
\end{eqnarray*}
where we used Jensen's inequality and that $\nu_*$ is invariant by $\mathcal R^*$, as in \eqref{eq:22avecG}. Alternatively, if we only assume \eqref{eq:YY'couple2} and $G(u)=(u-1)^2$,
\begin{eqnarray*}
\mathcal I_G(\nu \mathcal R|\nu_*) &=& 2 \int_{\R^d}  |\na \mathcal R^* h|^2\dd \nu_* \\
& \leqslant & 2\kappa \int_{\R^d}   |\na   h|^2\dd \nu_* \\
& = & \kappa \int_{\R^d} G''( h) |\na  h|^2\dd \nu_* \,,
\end{eqnarray*}
where wed used \eqref{eq:naRf2} and that $\nu_*$ is invariant by $\mathcal R^*$.
\end{proof}


\begin{proof}[Proof of Proposition~\ref{prop:convex}]
The proposition is proven by applying Proposition~\ref{prop:general_convex} to $\mathcal R = \P_\theta^n$. Indeed, as seen in Section~\ref{sec:preliminary}, using the reversibility up to velocity reversal of the Hamiltonian dynamics, writing $\mathcal V$ the operator given by $\mathcal Vf(x,v)= f(x,-v)$, then $(\P_\theta^n)^* = (\H^* \D^*)^n = (\mathcal V\mathcal H\mathcal V \D)^n = \mathcal V (\mathcal H\D)^n \mathcal V$, where we used that $\mathcal V^2$ is the identity and that $\mathcal V \D\mathcal V = \D$. Let $z=(x,v),z'=(x',v')\in\R^{2d}$. In order to apply Proposition~\ref{prop:general_convex}, we have to construct a coupling of $\delta_z (\P_\theta^n)^*$ and $\delta_{z'}(\P_\theta^n)^*$ which satisfy either \eqref{eq:YY'couple} or \eqref{eq:YY'couple2} (the two proofs are similar, we only write the case of an almost sure contraction). To do so, we consider $Z=(X,V),Z'=(X',V')$ a coupling of $\delta_{(x,-v)} \P_\theta^n$ and $\delta_{(x',-v')}\P_\theta^n$ such that, almost surely,
\[|Z-Z'|^2 \leqslant \kappa_n |(x,-v)-(x',-v')|^2\,,\]
which is possible under the assumption of Proposition~\ref{prop:convex}(1).
 Then $(X,-V)$ and $(X',-V')$ form a coupling of  $\delta_z (\P_\theta^n)^*$ and $\delta_{z'}(\P_\theta^n)^*$. The fact that  $|(X,-V)-(X',-V')|=|Z-Z'|$ and $|(x,-v)-(x',-v')| = |z-z'|$ concludes the proof.
\end{proof}

\begin{remark}\label{rem:convex}
If, instead of working with the standard Euclidean norm, we were working in  the proof of Proposition~\ref{prop:convex} with $\|z\|_M = \sqrt{z\cdot M z}$ (as e.g. in the proof of \eqref{eq:couplage_para_HMC} in \cite{MonmarcheHMCconvexe}), then a contraction of $\|\cdot\|_M$ along the chain with transition $\P_\theta$ yields a contraction of $\|\cdot\|_{RMR}$ along the chain with transition $\P_\theta^*$, with $R$ the matrix corresponding to $(x,v)\mapsto (x,-v)$. This leads to gradient estimates of the form $\|\na \P_\theta^* h \|_{(RMR)^{-1}} \leqslant \P_{\theta}^*\| \na h\|_{(RMR)^{-1}} $ in the proof of Proposition~\ref{prop:general_convex} (and similarly with a square). Similarly, $\|\na \P_\theta h \|_{M^{-1}} \leqslant \P_{\theta}\| \na h\|_{M^{-1}} $. This is consistent with Villani's modified entropy based on a gradient term involving, on the one hand, $|\na_x h + \na_v h|^2$ when $h$ is the relative density of the law of the process (as in our case \eqref{eq:modifiedEntropie} or in \cite[Theorem 35]{Villani2009}) or $|\na_x f - \na_v f|^2$ when $f$ is a test function (as in \cite{MonmarcheGamma}).
\end{remark}
}

\section{Examples and applications}\label{sec:examples}

\subsection{Log-concave target measures}\label{sec:logconcave}

Many results are known for the idealized or unadjusted HMC chain or the Langevin diffusion when $U$ is strongly convex, see e.g. \cite{ChenVempala,CaoLuWang,MonmarcheSplitting,MonmarcheContraction,Dalalyan2018OnSF,Dwivedi1,Ma2,MangoubiSmith,Seiler2014PositiveCA} . However, our result easily applies to the case where $U$ is convex without being strongly convex. Indeed, it is known that all log-concave probability measures satisfy a Poincaré inequality (i.e. corresponding to $G(u)=(u-1)^2/2$ with the notations of Section~\ref{subsc:G-entropy}). Moreover, the KLS conjecture (for Kannan-Lov{\'a}sz-Simonovits) states that the Poincaré constant of all isotropic (i.e. centered with covariance matrix the identity) log-concave probability measure on $\R^d$ should be bounded by a universal constant (uniformly in $d$). This conjecture hasn't been established yet in its full generality (see \cite{LeeVempala} for a recent review) but it is known for spherically symmetric measures \cite{Bobkov2003,BonnefontJoulinMa}
 and very recent progresses \cite{ChenKLS,Lehec,Klartag} have established that if $\mu$ is an isotropic log-concave probability measure on $\R^d$ then it satisfies a Poincaré inequality with a constant $C_P \leqslant M \new{\ln d}$, where $M$ is a universal constant.
 
Hence, for a log-concave target, we can apply Theorem~\ref{thm:dissipation_modified_G} to get a convergence rate in the chi-square divergence with a very mild dependency in the dimension for the contraction rate.

To get a result in relative entropy, log-concavity alone is not sufficient since a log-Sobolev inequality requires a Gaussian tail. However, if $U$ is convex in $\R^d$ and strongly convex outside a ball, the  Poincaré inequality can be combined with a Lyapunov condition to get a log-Sobolev inequality. Indeed, \cite[Theorem 1.2]{CattiauxGuillinWu} (or more precisely here \cite[Theorem 3.15]{MenzSchlichting} with $\Omega=\R^d$  since the constants are explicit) states the following:

\begin{theorem}[\cite{CattiauxGuillinWu,MenzSchlichting}]\label{thmCGW-MS}
Assume that $\pi\propto e^{-U}$ satisfies a Poincaré inequality with constant $C_P$, and that there exist $K,\lambda,b\geqslant 0$  and a $\mathcal C^2$ function $W:\R^d \rightarrow [1,\infty)$ such that for all $x\in\R^d$, $\na^2 U(x) \geqslant - K$ and
\[-\na U(x)\cdot \na W(x) + \Delta W(x) \leqslant \po- \lambda |x|^2 + b \pf W(x)\,.\]
Then $\pi$ satisfies a LSI($C_{LS}$) with, writing $m_2=\int_{\R^d} |x|^2 \pi(\dd x)$,
\[C_{LS} \leqslant 2\sqrt{\frac1\lambda \po \frac12 + C_P(b+\lambda m_2)  \pf } + \frac{K\po 1+ 2C_P(b+\lambda m_2)\pf +4\lambda C_P}{2\lambda } \,.\]  
\end{theorem}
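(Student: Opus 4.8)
This is a known result: it is \cite[Theorem~1.2]{CattiauxGuillinWu}, and the explicit closed form of the constant recorded here is the one obtained in \cite[Theorem~3.15]{MenzSchlichting}, so in the paper we simply invoke it. For the reader who wants to see where the three hypotheses enter, the structure of the argument is as follows.

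First I would convert the Lyapunov condition into a weighted energy estimate. Write $\mathfrak{L}=\Delta-\na U\cdot\na$ for the generator of the overdamped diffusion \eqref{eq:overdLangevin}, which is symmetric in $L^2(\pi)$ with Dirichlet form $\mathcal E(f,f)=\int_{\R^d}|\na f|^2\dd\pi$. An integration by parts against $f^2/W$ together with the elementary inequality $2ab\leqslant a^2+b^2$ applied to $a=|\na f|$, $b=f|\na W|/W$ gives the classical bound $\int_{\R^d}\frac{-\mathfrak{L}W}{W}\,f^2\dd\pi\leqslant\mathcal E(f,f)$ for all smooth compactly supported $f$. Inserting the hypothesis $-\mathfrak{L}W/W\geqslant\lambda|x|^2-b$ yields
\[\lambda\int_{\R^d}|x|^2 f^2\dd\pi\ \leqslant\ \mathcal E(f,f)+b\int_{\R^d}f^2\dd\pi\,,\]
so that, for every $R>0$, the tail $\int_{|x|>R}f^2\dd\pi$ is controlled by $\lambda^{-1}R^{-2}(\mathcal E(f,f)+b\int f^2\dd\pi)$.

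Next I would produce a \emph{defective} log-Sobolev inequality $\Ent(\nu|\pi)\leqslant A\,\mathcal I(\nu|\pi)+B$. The lower bound $\na^2U\geqslant-K$ makes $\pi$ locally log-concave up to a bounded perturbation, so on any ball $B_R$ the restriction $\pi|_{B_R}$ satisfies a log-Sobolev inequality: the measure $\propto e^{-(U+\frac K2|\cdot|^2)}$ restricted to the bounded convex set $B_R$ is log-concave, hence enjoys an LSI, and a Holley--Stroock perturbation reinstates the bounded factor $e^{\frac K2|\cdot|^2}$ on $B_R$. Splitting $\int f^2\ln(f^2/\pi(f^2))\dd\pi$ into its contributions inside and outside $B_R$, estimating the inside part by the local LSI and the outside part by the tail bound of the previous step, and optimising over $R$, one obtains such a defective inequality, with $A,B$ explicit in $\lambda,b,K,C_P$ and the second moment $m_2=\int_{\R^d}|x|^2\dd\pi$. (The reference \cite{MenzSchlichting} replaces the crude ball splitting by a sharper decomposition of the landscape, which is why its constants come out in the clean displayed form.) Finally I would tighten: applying Rothaus' lemma to $f=\sqrt{\dd\nu/\dd\pi}$, one bounds $\Ent(\nu|\pi)$ by the energy of the centred function plus $2\,\mathrm{Var}_\pi(f)$; the defective inequality absorbs the first term up to a lower-order multiple of $\mathrm{Var}_\pi(f)$, and the Poincaré inequality $C_P$ turns all the remaining variance terms back into $\mathcal E(f,f)$, producing the tight LSI($C_{LS}$) with a constant of the announced shape.

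The only genuine difficulty is the bookkeeping: tracking $\lambda,b,K,C_P,m_2$ through the choice of $R$ in the middle step and through Rothaus' tightening so that the final constant comes out in the explicit closed form quoted. It is precisely to avoid redoing this optimisation that we cite \cite[Theorem~3.15]{MenzSchlichting} rather than reprove the statement.
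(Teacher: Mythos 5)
Your proposal is correct and takes exactly the same route as the paper: Theorem~\ref{thmCGW-MS} is an imported result, and the paper likewise gives no proof, simply invoking \cite[Theorem 1.2]{CattiauxGuillinWu} and, for the explicit constant, \cite[Theorem 3.15]{MenzSchlichting} with $\Omega=\R^d$. (As a minor aside, the heuristic sketch you append is not quite the argument of those references, which combine the Lyapunov moment estimate with the HWI inequality — this is where $\na^2 U\geqslant -K$ enters and where the square-root and $K$-terms of the displayed constant come from — rather than a defective LSI on balls tightened by Rothaus' lemma; but since the proof is by citation this does not affect correctness.)
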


Without loss of generality we can always assume that $\pi$ is isotropic (up to multiplying by the norm of the covariance matrix at the end in the Poincaré or log Sobolev inequalities at the end), in which case $m_2=d$.

For instance, assuming that $\na U(x)\cdot x \geqslant  \rho |x|^2 -R $ for all $x\in\R^d$ for some $\rho,R>0$, taking $W(x) = e^{-\alpha|x|^2/2}$ yields
\[\frac{ -\na U(x)\cdot \na W(x) + \Delta W(x)}{W(x)} \leqslant - \rho\alpha |x|^2 + \alpha R + \alpha^2 |x|^2 + \alpha d\,.  \]
Taking $\alpha =\rho/2$ we can apply Theorem~\ref{thmCGW-MS} with $\lambda =3\rho^2/4$ and $b=\rho(R+d)/2$. If, for instance, $\rho$ is uniform in $d$ and $R$ is of order $d$ in high dimension, we get a LSI with a constant of order $\sqrt{d} (\ln d)^{5}$ if $\mu$ is isotropic and log-concave (since in that case $K=0$).

As a conclusion, for log-concave target measures which are strongly log-concave outside a ball with $\|\na^2 U\|_\infty <\infty$, Theorem~\ref{thm:dissipation_modified} yields a long-time convergence for the relative entropy with an explicit polynomial rate in the dimension (provided a polynomial dependency of $\|\na^2 U\|_\infty$, of  the covariance of the measure and of $\rho,R$ in the Lyapunov condition).

\subsection{Mean-field systems}\label{Sec:mean-field}

In this section, we consider the case where $d = N p$ for some $N,p\in \N$ and, decomposing $\mathbf{x}=(x_1,\dots,x_N) \in (\R^p)^N$, the target measure is $\pi_N \propto \exp (-\beta U_N)$ with $\beta>0$ and
\[U_N(\mathbf{x}) = \sum_{i=1}^N U(x_i) + \frac{1}{2N} \sum_{i\neq j} W(x_i-x_j)\,,\]
for some $U,W\in\mathcal C^2(\R^p)$, respectively called the confinement and interaction potential. We will not give detailed formal proofs in this section, since many arguments are classical or very similar to the case of the Langevin diffusion studied in \cite{MonmarcheMeanField,GuillinMonmarcheMF}. Given some fixed parameters $t,\eta$ independent from $N$, let $\P_N$ be the transition operator of the idealized HMC with potential $U_N$. We work under the following conditions.

\begin{assumption}\label{hyp:meanfield}\
The potentials $U$ and $W$ are $\mathcal C^\infty$  with all their derivatives of order larger than 2 bounded. There exist $c_U>0$, $c_U',c_W',R\geqslant 0$ and $c_W\in\R$  such that for all $x,y,z\in\R^d$,
\begin{eqnarray*}
\po \na U(x) - \na U(y) \pf \cdot   (x-y) & \geqslant & c_U |x-y|^2 - c_U' |x-y| \1_{\{|x-y|\leqslant R\}}\\
\po \na_x W(x,z) - \na_y W(y,z) \pf \cdot   (x-y) & \geqslant & c_W |x-y|^2 - c_W' |x-y| \1_{\{|x-y|\leqslant R\}}\,.
\end{eqnarray*}
 Moreover, $U$ is the sum of a strictly convex function and of a bounded function, $W$ is lower bounded, $c_U+c_W > \|\na^2_{x,x'} W\|_\infty$ and  $\beta <\beta_0$ where
 \[\beta_0 \  := \  \frac{4}{(c_U'+c_W')R} \ln\po \frac{c_U+c_W}{\|\na^2_{x,x'} W\|_\infty}\pf \qquad  (:=\ +\infty\text{ if }(c_U'+c_W')R=0).\]
Finally, $t\sqrt{\|\na^2 U\|_\infty + 2 \|\na^2 W\|_\infty} \leqslant 1/4$.
\end{assumption}

\new{These conditions are similar to those of \cite[Section 2.2]{BouRabeeSchuh}. } In particular, this holds if $U$ and $W$ are both strongly convex (with bounded Hessian), or if $U$ is quadratic at infinity and either $W$ is small or the temperature $\beta^{-1}$ is large enough. 

Since Assumption~\ref{hyp:meanfield} implies that $\|\na^2 U_N\|_\infty \leqslant \|\na^2 U\|_\infty + 2 \|\na^2 W\|_\infty$, it implies Assumption~\ref{hyp:main} for the HMC chain on $\R^{pN}$. Moreover, according to \cite[Theorem 8]{GuillinMonmarcheMF} (based on \cite[Theorem 8]{GuillinWuZhang}), under Assumption~\ref{hyp:meanfield}, there exists $\lambda>0$ such that $\mu_N = \pi_N \otimes \mathcal N(0,I_d)$ satisfies a LSI($\lambda$) for all $N\in\N$.  As a consequence, under this condition, Theorem~\ref{thm:dissipation_modified} holds and provide a contraction rate of the modified entropy independent from $N$. This is thus a case of dimension-free convergence rate. \new{Hence, for idealized HMC, we get a result similar to \cite[Theorem 3]{BouRabeeSchuh} but in relative entropy instead of $\mathcal W_1$ distance. }

The regularization result of Theorem~\ref{thm:Wasserstein/entropie} also scales well here. Indeed, under Assumption~\ref{hyp:meanfield}, $U_N$ satisfies Assumption~\ref{hyp:Frobenius} with $L_H \leqslant \sqrt{L_U^2 + 2L_W^2}$ which is uniform in $N$. Besides, Proposition~\ref{prop:densitéW2} is also interesting in this settings, and we recover a result similar to \cite{BouRabeeEberle}. 

\medskip

To go further, consider the case of independent initial conditions:
\begin{assumption}\label{hyp:MeanField_nu0}
The initial distribution of the chain is of the form $\nu_0 = \bar \nu_0^{\otimes \new{N}}$ where $\bar \nu_0 \in\mathcal P(\R^{2p})$ has a finite second moment and a density (still denoted $\bar \nu_0$) such that
\[\int_{\R^{2d}} \left| \na \ln \bar\nu_0 \right|^2 \bar\nu_0 < \infty\,.\]
\end{assumption}
From the uniform in $N$ log-Sobolev constant and bound on $\|\na^2 U_N\|_\infty$, is is straightforward to check that Assumptions~\ref{hyp:meanfield} and \ref{hyp:MeanField_nu0} implies that 
\begin{equation}\label{Eq:MFnu0}
\Ent(\nu_0|\mu_N) + \mathcal I(\nu_0|\nu_N) \leqslant CN
\end{equation}
for some $C>0$ independent from $N$.

Let us  write $\mathbf{Z}_n = (Z_{1,n},\dots,Z_{N,n})$ the state of the chain after $n$ transitions, with $Z_{i,n}=(X_{i,n},V_{i,n})$  the position of velocity of the $i^{th}$ particle. Assuming that the initial condition is $\nu = \bar \nu_0^{\otimes N}$ for some $\bar \nu_0 \in\mathcal P(\R^p)$, it is known that a propagation of chaos phenomenon occurs \cite{MonmarcheMeanField,GuillinMonmarcheMF}:  for a fixed $k\in\N$, the law of $(Z_{1,n},\dots,Z_{k,n})$ converges as $N\rightarrow \infty$ to $\bar \nu_n^{\otimes k}$ where $(\bar \nu_n)_{n\in\N}$ is given by
 \[\bar \nu_{n+1} = \H^{nl}_t \po \bar \nu_{n+1} \D_\eta\pf\,,\]
 where $\D_\eta$ is as in Section~\ref{sec:MainResults} but $\H^{nl}_t$ is the non-linear operator corresponding to the Vlasov equation, namely, for $\rho\in\mathcal P(\R^{2p})$, $\rho_t:= \H_t^{nl}(\rho)$ is the weak solution to
 \[\partial_t \rho_t(x,v) + v\cdot \na_x \rho_t(x,v) =  \na_v \cdot \co  (\na V + \na W\ast \rho_t)  \rho_t \cf (x,v)\,,\qquad \rho_0 = \rho \,.\]
 Here, we mean $\na W \ast \rho (x) = \int_{\R^{2d}} \na W(x-x') \rho(\dd x',\dd v')$.
 
 A probabilistic interpretation of this limit non-linear idealized HMC chain is given by the time-inhomogeneous Markov chain $(\bar Z_n)_{n\in\N}$ on $\R^{2p}$ whose transitions are given by an alternance of the velocity \new{randomization} step given by $\D_\eta$ and the integration for a time $t$ of the inhomogeneous Hamiltonian dynamics
 \[\dot X_t = V_t \qquad \dot V_t =  - \na U(X_t) - \na W\ast \rho_t (X_t) \,,\qquad \text{where}\qquad \rho_t =  \mathcal Law(X_t,V_t)\,.  \ \]

Considering a parallel coupling of the mean-field HMC $\mathbf{Z}=(Z_1,\dots,Z_N)$ and $N$ independent copies $\mathbf{\bar Z}=(\bar Z_1,\dots,\bar Z_N)$ of $\bar Z$ (i.e. using the same Gaussian variables for the two systems at each \new{randomization} step), it  is standard to show that under Assumption~\ref{hyp:meanfield} there exists $C$ (independent from $\eta,t,N)$ such that for all $N,n\in\N$, 
\[\mathbb E \po |\mathbf{Z}-\mathbf{\bar Z}|^2\pf \leqslant C e^{Cn t}\,, \] 
see e.g. the proof of \cite[Proposition 12]{MonmarcheMeanField}. Denoting by $\nu_n^{N,k}$ the law of $(Z_{1,n},\dots,Z_{k,n})$, this bound together with the interchangeability of the particles immediatly give the estimate
\[\mathcal W_2^2 \po \nu_n^{N,k},\bar \nu_n^{\otimes k}\pf \leqslant \frac{kC e^{Cnt}}N\]
for any $k\in\cco 1,N\ccf$.  From this, by following the proof of \cite[Theorem 10]{GuillinWuZhang}, we get that 
\begin{equation}\label{eq:MFN}
 \liminf_{N\rightarrow \infty} \frac1N \Ent(\nu_n | \mu_N) \geqslant  \mathcal H_W(\nu_n)\,, 
\end{equation}
where $\mathcal H_W$ is the so-called free energy, given by
\[\mathcal H_W(\nu) = E(\nu) - \inf_{\nu'\in\mathcal P(\R^{2p}} E(\nu')\]
where, considering the probability measure $\alpha \propto e^{-U(x)-|v|^2/2}$,
\[E(\nu) = \Ent\po \nu|\alpha\pf + \frac12 \int_{\R^{2p}} W(x-x') \nu(\dd x) \nu(\dd x')\,.\]
From \cite[Lemma 21]{GuillinWuZhang}, $E$ admits a unique minimizer over $\mathcal P(\R^{2p})$. Combining \eqref{Eq:MFnu0}, \eqref{eq:MFN} and Theorem~\ref{thm:dissipation_modified} finally yields the following:

\begin{proposition}
Under Assumptions~\ref{hyp:meanfield} and \ref{hyp:MeanField_nu0}, there exist $C,\kappa>0$ such that for all $n\in\N$,
\[\mathcal H_W \po \bar \nu_n\pf \leqslant e^{- \kappa n} C\,.\]
\end{proposition}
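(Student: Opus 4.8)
The final Proposition asserts an exponential-in-$n$ decay of the free energy $\mathcal{H}_W(\bar\nu_n)$ along the limit nonlinear HMC chain, with constants $C,\kappa$ uniform in $n$. The strategy is to transfer, via the propagation of chaos / large-$N$ limit, the dimension-free entropic convergence of the $N$-particle chain (Theorem~\ref{thm:dissipation_modified}) down to the nonlinear evolution. Concretely, I would carry out the following steps in order. First, verify that Assumption~\ref{hyp:meanfield} yields, for every $N$, both Assumption~\ref{hyp:main} for $U_N$ (since $\|\na^2 U_N\|_\infty\leqslant \|\na^2 U\|_\infty+2\|\na^2 W\|_\infty$ and $t\sqrt{\cdot}\leqslant 1/4$) and a log-Sobolev inequality for $\mu_N$ with a constant $\lambda$ \emph{independent of $N$} — this is exactly \cite[Theorem~8]{GuillinMonmarcheMF}. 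Hence Theorem~\ref{thm:dissipation_modified} applies to $\P_N$ with a modified entropy $\mathcal{L}_N$ (built with parameter $a$ depending only on $\gamma$, via the rough choice $a=\gamma/[14+8(\gamma+3)^2]$) and a contraction factor
\[
\mathcal{L}_N(\nu_n) \leqslant \left(1+\frac{\rho}{\max(\lambda,1)/a+2}\right)^{-n}\mathcal{L}_N(\nu_0)=:\theta^n\,\mathcal{L}_N(\nu_0),
\]
with $\theta<1$ depending only on $t,\gamma,\lambda$, i.e.\ not on $N$. Set $\kappa=-\ln\theta>0$.

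**Main steps.** Second, control the initial modified entropy: since $\nu_0=\bar\nu_0^{\otimes N}$, tensorization of entropy gives $\Ent(\nu_0|\mu_N)=N\,\Ent(\bar\nu_0|\bar\alpha)$ (where $\bar\alpha=\bar\pi\otimes\mathcal N(0,I_p)$ after absorbing the interaction appropriately, or more simply one directly estimates using the uniform LSI constant and Assumption~\ref{hyp:MeanField_nu0}), and likewise the Fisher-information term of $\mathcal{L}_N$ is at most $C'N$; together this is the stated bound \eqref{Eq:MFnu0}, $\mathcal{L}_N(\nu_0)\leqslant CN$ for a constant $C$ uniform in $N$. Third, since $\mathcal{L}_N(\nu)\geqslant\Ent(\nu|\mu_N)$ (the Fisher-information correction is nonnegative), combining with the contraction gives
\[
\frac1N\,\Ent(\nu_n|\mu_N)\ \leqslant\ \frac1N\,\mathcal{L}_N(\nu_n)\ \leqslant\ \theta^n\,\frac1N\,\mathcal{L}_N(\nu_0)\ \leqslant\ \theta^n\,C\ =\ C e^{-\kappa n}
\]
for all $N,n$. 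Fourth, pass to the limit $N\to\infty$: the chaos estimate $\mathcal W_2^2(\nu_n^{N,k},\bar\nu_n^{\otimes k})\leqslant kCe^{Cnt}/N\to 0$ (obtained from the parallel coupling bound $\mathbb E|\mathbf Z-\mathbf{\bar Z}|^2\leqslant Ce^{Cnt}$ together with exchangeability, exactly as in \cite{MonmarcheMeanField}) feeds the lower-semicontinuity argument of \cite[Theorem~10]{GuillinWuZhang} to give \eqref{eq:MFN}, namely $\liminf_N\frac1N\Ent(\nu_n|\mu_N)\geqslant \mathcal H_W(\bar\nu_n)$. Taking $\liminf_N$ in the displayed inequality and using that the right-hand side $Ce^{-\kappa n}$ is $N$-independent yields $\mathcal H_W(\bar\nu_n)\leqslant C e^{-\kappa n}$, which is the claim.

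**Where the difficulty lies.** The routine parts (the scaling/LSI verification, the tensorization of the initial entropy, monotonicity $\mathcal{L}_N\geqslant\Ent$, and the final $\liminf$) are straightforward given the machinery already set up. The genuinely delicate input is the pair of limiting statements: the uniform-in-$N$ log-Sobolev constant for $\mu_N$, and the liminf inequality \eqref{eq:MFN} linking $\frac1N\Ent(\nu_n|\mu_N)$ to the free energy $\mathcal H_W(\bar\nu_n)$. Both are quoted from \cite{GuillinMonmarcheMF,GuillinWuZhang}, but their validity rests on Assumption~\ref{hyp:meanfield} — in particular on $U$ being a bounded perturbation of a strictly convex potential, $W$ lower bounded, and the smallness condition $\beta<\beta_0$ with $c_U+c_W>\|\na^2_{x,x'}W\|_\infty$, which is precisely what makes the uniform functional inequality hold despite the mean-field interaction. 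A secondary technical point is making the parallel-coupling moment bound $\mathbb E|\mathbf Z-\mathbf{\bar Z}|^2\leqslant Ce^{Cnt}$ with a constant $C$ independent of $N$, $t$, $\eta$; this uses the Lipschitz estimates on the Hamiltonian flow from Section~\ref{sec:prelimHamilton} (which are uniform in the dimension) plus the fact that the interaction term is $\frac1N\sum_j\na W(x_i-x_j)$, so the empirical-measure fluctuation is $O(1/\sqrt N)$ per particle, and one closes a Grönwall-type recursion in $n$. I would not write these out in detail, instead citing the analogous computations for the kinetic Langevin diffusion and noting that the discrete-time Hamiltonian step is handled by the bounds of Lemma~\ref{lem:JacobPhi}.
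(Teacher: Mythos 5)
Your proposal is correct and follows essentially the same route as the paper: uniform-in-$N$ LSI for $\mu_N$ from \cite{GuillinMonmarcheMF}, Theorem~\ref{thm:dissipation_modified} giving an $N$-independent contraction of the modified entropy, the $O(N)$ bound \eqref{Eq:MFnu0} on the initial data, and the propagation-of-chaos/lower-semicontinuity inequality \eqref{eq:MFN} from \cite{GuillinWuZhang} to pass to the limit. Your only minor slip — the exact tensorization $\Ent(\nu_0|\mu_N)=N\,\Ent(\bar\nu_0|\bar\alpha)$, which fails since $\mu_N$ is not a product measure because of $W$ — is harmless, as you immediately fall back on the direct estimate the paper actually uses.
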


This convergence of the free energy to zero in turns implies the long-time convergence in $\mathcal W_2$ and in total variation of $\bar \nu_n$ toward the minimizer of the free energy, see e.g. \cite{GuillinWuZhang,GuillinMonmarcheMF}.

\subsection{Low temperature and simulated annealing}\label{sec:annealing}

In this section, instead of sampling the target measure $\pi \propto e^{-U}$, we consider the problem of finding a global minimizer of $U$. For this, we consider a simulated annealing algorithm based on the idealized HMC\new{, which illustrates both the robustness of our approach as it applies to time-inhomogeneous target distributions (whose interest goes beyond optimization, see \cite{doucet2022scorebased} and references within) and its sharpness in the low temperature regime. Notice that practical motivations to use HMC for simulated annealing instead of, say, the overdamped Langevin diffusion, are the same as for sampling at constant temperature, namely the possibility to use higher order numerical schemes, and possibly their improved sampling efficiency locally within each potential well (which is not captured in Proposition~\ref{prop:recuit} below since, in the theoretical limit $\beta\rightarrow \infty$, the overwhelming issue is the transition across energy barriers, which are not reduced when using local non-reversible dynamics).  }

\new{The (idealized) HMC-based simulated annealing algorithm is} the time-inhomogeneous Markov chain $(X_n,V_n)_{n\in\N}$ defined as follows. Let $(\beta_n)_{n\in\N}$, $(t_n)_{n\in\N}$  (resp. $(\eta_n)_{n\in\N}$) be two sequences on $\R_+^*$ (resp. on $[0,1)$).   Then, the transition from $(X_n,V_n)$ to $(X_{n+1},V_{n+1})$ is a step of the idealized HMC chain with integration time $t_n$ and damping parameter $\eta_n $ as defined in Section~\ref{sec:MainResults}, except that the potential $U$ is replaced by  $\beta_n U$. We call $(\beta_n)_{n\in\N}$ the cooling schedule of the algorithm, $\beta_n$ being the inverse temperature at the $n^{th}$ step.

Similarly to the sampling problem associated to Theorem~\ref{thm:dissipation_modified}, we assume that $U$ is quadratic at infinity, and more precisely:

\begin{assumption}\label{hyp:recuit}
The potential $U\in\mathcal C^2(\R^d)$ is such that $\min U = 0$ and there exist $L_0,C_0>0$ such that for all $x\in\R^d$,
\begin{equation}\label{eq:cond_Lyap_LSI}
|\na^2 U(x)|\leqslant L_0,\qquad \frac{1}{C_0}|x| - C_0 \leqslant |\na U(x)|\leqslant C_0|x|+C_0,\qquad U(x) \geqslant \frac{1}{C_0}|x|^2 - C_0\,. 
\end{equation}
\end{assumption}

In particular, under Assumption~\ref{hyp:recuit},  $e^{-\beta U}$ is integrable for all $\beta>0$. Denote by $\pi_\beta$ the corresponding probability measure and $\mu_\beta = \pi_\beta\otimes\mathcal N(0,I_d)$. Moreover,  \eqref{eq:cond_Lyap_LSI} implies that $U$ goes to infinity at infinity and that $\na U \neq 0$ outside  a compact \new{set}, and thus that the critical depth of the potential, defined by
\begin{equation}\label{eq:c*}
c_* := \inf_{\Gamma} \left\{\sup_{s\in[0,1]} U\po \Gamma(s)\pf -  U\po \Gamma(0)\pf\right\} 
\end{equation}
where the infimum runs over all continuous path\new{s} $\Gamma:[0,1]\rightarrow \R^d$ with $\Gamma(0)$ a  local minimum of $U$ and $\Gamma(1)$ a global minimum, is necessarily finite.

We write $\nu_n$ the law of $(X_n,V_n)$. For simplicity, we only consider parameters of the form
\[\forall n\in\N\,,\qquad \beta_n = \beta_0  + \frac{\ln(1+ n)}{\hat c}\,,\qquad t_n = q/  \sqrt{\beta_nL_0}\,,\qquad \eta_n = 1 - \gamma t_n\,, \]
for some fixed $\beta_0,\hat c,\gamma>0$, $q\in(0,1/4)$.  \new{Let us briefly discuss these choices. The scaling of $t_n$ in terms of $\beta_n$ is dictated by Assumption~\ref{hyp:main}, since the Lipschitz constant of $\beta_n U$ is $\beta_n L_0$.  For the damping parameter, we choose here a Langevin scaling in terms of $t_n$, as $t_n\rightarrow 0$ with $n$ going to infinity, but we could similarly take $\eta_n = \eta$ fixed. Finally, the logarithmic scaling for the cooling schedule is classical in the study of the simulated annealing, as it is known for other Markov processes (e.g. \cite{Holley1} for Markov chains on finite sets, \cite{Holley} for the overdamped Langevin diffusion, \cite{M17,M32} for the underdamped Langevin diffusion) that convergence in probability to  global minima always (resp. never) occurs for cooling schedule slower (resp. faster) than logarithmic while, for logarithmic schedules, a phase transition occurs in terms of $\hat c$ at the value $\hat c= c_*$. The intuition behind this is the following. At low temperature, the probability that, during a time interval of length $1$, the process escapes from the basin of attraction of a non-global local equilibrium (which is a rare event) scales like $e^{-\beta w}$ for some $w>0$. For the process to converge to a global minimum, such an event has to occur with probability 1 which, in the spirit of the Borel-Cantelli theorem (here the events are not independent but the process is Markovian and metastable and thus the situation is similar), the question is whether $\sum_{n\in\N} e^{-\beta_n w} $ is finite or not, which is why the transition happens when $\beta_n$ scales like $\ln n$.
}

An application of  Theorem~\ref{thm:dissipation_modified} in this time-inhomogeneous case yields the following.

\begin{proposition}\label{prop:recuit}
Under Assumption~\ref{hyp:recuit}, assume moreover that $\hat c > c_*$ and that $\mathcal I(\nu_0|\mu_{\beta_0})$ is finite. Then, for all $\delta>0$, there exists $C>0$ such that for all $n\in\N$,
\[\Ent(\nu_n|\mu_{\beta_n})  \leqslant \frac{C}{n^{1-\new{(c_*/\hat c)}-\delta}}\,.\]
\end{proposition}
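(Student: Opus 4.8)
The plan is to follow the now-classical scheme for analysing simulated annealing via functional inequalities (as in \cite{Holley,M17}), but using Theorem~\ref{thm:dissipation_modified} in place of the continuous-time entropy dissipation. Set $\mathcal E_n = \Ent(\nu_n|\mu_{\beta_n})$. The key is a one-step inequality of the form
\[
\mathcal E_{n+1} \ \leqslant \ (1-r_n)\,\widetilde{\mathcal E}_n \ +\ (\text{price of changing reference measure}),
\]
where $\widetilde{\mathcal E}_n = \Ent(\nu_n \P_n | \mu_{\beta_n})$ is the entropy after the HMC step at temperature $\beta_n$ but \emph{before} updating the reference to $\mu_{\beta_{n+1}}$. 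First I would control the contraction factor: the Lipschitz constant of $\beta_n U$ is $\beta_n L_0$, and with $t_n = q/\sqrt{\beta_n L_0}$ we have $t_n\sqrt{\beta_n L_0}=q<1/4$, so Assumption~\ref{hyp:main} holds after rescaling; moreover $\pi_{\beta_n}$ satisfies a LSI with constant $C_{LS}(\beta_n)$. It is standard (e.g. via the Holley--Stroock perturbation argument combined with a Lyapunov/Bakry--Émery estimate using \eqref{eq:cond_Lyap_LSI}, exactly as in \cite{Holley,M17}) that $C_{LS}(\beta_n) \leqslant A\, e^{\beta_n c_*}$ for some constant $A$ and all $\beta_n$ large. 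Applying Theorem~\ref{thm:dissipation_modified} (after rescaling, with $L=L_0$, and choosing $a$ of order $\gamma/[14+8(\gamma+3)^2]$ which stays bounded since $\gamma$ is fixed while $t_n\to 0$) gives a contraction of the modified entropy $\mathcal L_n$ at rate $r_n \asymp t_n / C_{LS}(\beta_n) \asymp \beta_n^{-1/2} e^{-\beta_n c_*}$. Passing between $\mathcal L_n$ and $\mathcal E_n$ costs only universal constant factors (since $\mathcal E_n \leqslant \mathcal L_n \leqslant \mathcal E_n + 2a\,\mathcal I(\cdot) \leqslant (1 + 2a/\min(C_{LS},1))\mathcal E_n$ after one step, using the LSI), so one obtains a genuine contraction of $\mathcal E_n$ (or of $\mathcal L_n$) with the same rate up to constants.

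Next I would estimate the error term coming from the change of reference measure $\mu_{\beta_n}\to\mu_{\beta_{n+1}}$. Writing $\mathcal E_{n+1} = \Ent(\nu_{n+1}|\mu_{\beta_{n+1}})$ and $\widetilde{\mathcal E}_{n+1}=\Ent(\nu_{n+1}|\mu_{\beta_n})$, a direct computation gives
\[
\mathcal E_{n+1} - \widetilde{\mathcal E}_{n+1}
= (\beta_{n+1}-\beta_n)\int U \,\mathrm{d}\nu_{n+1}^x \;-\; (\beta_{n+1}-\beta_n)\int U\,\mathrm{d}\pi_{\beta_n} \;+\; (\text{log-partition terms}),
\]
which is bounded, using convexity of $\beta\mapsto -\ln\int e^{-\beta U}$ and $\min U = 0$, by $(\beta_{n+1}-\beta_n)\,\mathbb E_{\pi_{\beta_n}}[U]$ plus a remainder of the same order. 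With the logarithmic schedule $\beta_{n+1}-\beta_n = \frac1{\hat c}\ln\frac{n+2}{n+1}\asymp \frac1{\hat c\,n}$, and using \eqref{eq:cond_Lyap_LSI} to bound $\mathbb E_{\pi_{\beta_n}}[U] \leqslant C/\beta_n$ for $\beta_n$ large (a routine moment estimate for the Gibbs measure of a quadratically-growing potential), the error term is $O(1/(n\beta_n)) = O(1/(n\ln n))$.

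Assembling: we get the recursion $\mathcal E_{n+1} \leqslant (1 - c\,r_n)\mathcal E_n + O(1/(n\ln n))$ with $r_n \asymp \beta_n^{-1/2} e^{-\beta_n c_*} = \beta_n^{-1/2}(n+1)^{-c_*/\hat c}$. Since $\hat c > c_*$, the decay of $\mathcal E_n$ is governed by a standard discrete Gronwall/comparison lemma: for a sequence with $\mathcal E_{n+1}\leqslant(1 - a_n)\mathcal E_n + b_n$ where $a_n \asymp n^{-c_*/\hat c}/\sqrt{\ln n}$ (summing to infinity, so $\prod(1-a_k)\to 0$) and $b_n \asymp 1/(n\ln n)$, one checks that $\mathcal E_n = O(b_n/a_n) = O\big(n^{-(1-c_*/\hat c)}\sqrt{\ln n}/\ln n\big) = O(n^{-(1-c_*/\hat c)+\delta})$ for any $\delta>0$, absorbing the logarithmic factors into the $n^{\delta}$; the finiteness of $\mathcal I(\nu_0|\mu_{\beta_0})$ is used only to ensure $\mathcal L_0<\infty$ so the recursion starts from a finite value. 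The main obstacle is the first step: getting the sharp LSI bound $C_{LS}(\beta) \lesssim e^{\beta c_*}$ for the \emph{non-convex, multi-modal} $U$. This is where the strength of the entropic approach over coupling methods shows up (it yields $c_*$ rather than a larger barrier constant), but it requires invoking the known asymptotics of the log-Sobolev constant at low temperature — I would cite \cite{Holley,M17,M32} (and the perturbation/Lyapunov machinery of \cite{CattiauxGuillinWu,MenzSchlichting}) rather than reprove it. A secondary technical point is that the constant $a$ in Theorem~\ref{thm:dissipation_modified} and the passage $\mathcal L_n \leftrightarrow \mathcal E_n$ must be tracked carefully so that all the non-sharp constants end up inside the harmless $n^{\delta}$ factor, which is why the statement is phrased with an arbitrary $\delta>0$.
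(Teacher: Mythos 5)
Your overall strategy coincides with the paper's: a low-temperature log-Sobolev constant of order $e^{\beta c_*}$ (the paper reproves this as Proposition~\ref{prop:LSIrecuit}, with a polynomial prefactor $\beta^{5d-1}$ that is absorbed into $n^\delta$, using exactly the Lyapunov/restriction machinery you cite), a one-step contraction from Theorem~\ref{thm:dissipation_modified} at rate $\asymp t_n/C_{LS}(\beta_n)$, a change-of-reference estimate for $\mu_{\beta_n}\to\mu_{\beta_{n+1}}$, and a discrete Gronwall argument whose output matches the claimed rate. However, there are two genuine gaps in the middle of your plan. First, the inequality you invoke to return from the modified entropy to the plain entropy, namely $\mathcal L_n \leqslant (1+2a/\min(C_{LS},1))\,\mathcal E_n$, is false: it would amount to bounding the (mixed-gradient) Fisher information by the entropy, i.e.\ a \emph{reverse} log-Sobolev inequality, whereas the LSI goes in the opposite direction; no such bound is available or proved in the paper. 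The paper avoids this entirely by running the recursion on the $\beta$-dependent modified functional $\mathcal L_{\beta_n}(\nu_n)$ itself (with $a=a(\beta_n)$ scaled in $\beta_n$) and using only $\Ent\leqslant\mathcal L$ at the very end -- this is precisely why $\mathcal I(\nu_0|\mu_{\beta_0})<\infty$ is assumed. As written, a recursion for $\mathcal E_n$ alone does not close.

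Second, your change-of-reference error is not justified as stated: the term $(\beta_{n+1}-\beta_n)\int U\,\dd\nu_{n+1}$ involves the law of the chain, not $\pi_{\beta_n}$, and cannot be replaced by the equilibrium average $\mathbb E_{\pi_{\beta_n}}[U]\lesssim 1/\beta_n$ without an a priori moment bound on $\nu_{n+1}$. The paper's Proposition~\ref{prop:recuitLbetabeta'} resolves this by controlling position moments of $\nu$ through its entropy via $uv\leqslant u\ln u-u+e^v$ together with a uniform exponential moment of $\pi_\beta$ (Lyapunov), which turns the error into a multiplicative-plus-additive form $\mathcal L_{\beta_{n+1}}(\nu)\leqslant(1+\varepsilon_n)\mathcal L_{\beta_n}(\nu)+\varepsilon_n$ with $\varepsilon_n\asymp \ln n/n$; this is harmless only because $\varepsilon_n\ll\theta_n$, and it must be tracked. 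Moreover, once you are forced to work with $\mathcal L$ (by the first point), the change of $\beta$ also modifies the gradient part of the functional, both through $(\beta'-\beta)\nabla U$ in $\nabla\ln(\dd\mu_\beta/\dd\mu_{\beta'})$ and through the $\beta$-dependent weight $\sqrt{\beta L_0}$ and parameter $a(\beta)$, which your plan does not address. With these two repairs -- recursion on $\mathcal L_{\beta_n}$ and a change-of-reference estimate of the paper's multiplicative form covering the Fisher term -- your concluding Gronwall step and the stated rate are correct.
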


The proof is postponed to the end of this section. Thanks to Pinsker's inequality and the fact that for all $u,\delta>0$ there exists $C>0$ such that
\[\mathbb P_{\pi_\beta} (U(X) \geqslant u) \leqslant Ce^{-\beta (u-\delta)}\]
(see e.g. \cite[Lemma 3]{M17}), we get the following final result for the convergence of the simulated annealing algorithm based on an idealized HMC chain. 

\begin{corollary}
In the settings of Proposition~\ref{prop:recuit}, for all $\delta>0$, there exists $C>0$ such that for all $n\in\N$ and all $u>0$,
\[\mathbb P \po U(X_n) \geqslant u \pf \leqslant C \po \frac{1}{n^{(1-c_*/\hat c-\delta)/2}} + \frac{1}{n^{u/\hat c -\delta}}\pf\,. \]
\end{corollary}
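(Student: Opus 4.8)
The plan is to compare the law of $X_n$ with the Gibbs measure $\pi_{\beta_n}$ at the current temperature, controlling the discrepancy by Proposition~\ref{prop:recuit} through Pinsker's inequality, and to estimate the equilibrium tail $\pi_{\beta_n}(U\geqslant u)$ directly from the explicit form of the Gibbs measure. Write $\nu_n^1$ for the position marginal of $\nu_n$. Since $\nu_n^1$ and $\pi_{\beta_n}$ are the images of $\nu_n$ and $\mu_{\beta_n}$ under the projection $(x,v)\mapsto x$, and the total variation distance does not increase under pushforward, for every $u>0$
\[\mathbb P\po U(X_n)\geqslant u\pf \ \leqslant \ \pi_{\beta_n}\po U \geqslant u\pf + \|\nu_n^1 - \pi_{\beta_n}\|_{TV} \ \leqslant \ \pi_{\beta_n}\po U \geqslant u\pf + \|\nu_n - \mu_{\beta_n}\|_{TV}\,.\]
By Pinsker's inequality $\|\nu_n - \mu_{\beta_n}\|_{TV}^2 \leqslant 2\Ent(\nu_n|\mu_{\beta_n})$, and Proposition~\ref{prop:recuit} applied with the same $\delta$ bounds the right-hand side by $2C/n^{1-c_*/\hat c - \delta}$, whence $\|\nu_n - \mu_{\beta_n}\|_{TV} \leqslant C' n^{-(1-c_*/\hat c - \delta)/2}$; when $1-c_*/\hat c - \delta \leqslant 0$ this holds trivially since the left-hand side is at most $1$. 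This disposes of the first term.

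For the equilibrium tail, let $x_*$ be a global minimizer of $U$ (it exists since $U$ is continuous and coercive by Assumption~\ref{hyp:recuit}); then $\na U(x_*)=0$ and $U(x_*)=0$, so Taylor's formula together with $|\na^2 U|\leqslant L_0$ gives $U(x)\leqslant \tfrac{L_0}{2}|x-x_*|^2$, and therefore
\[Z_\beta \ :=\ \int_{\R^d}e^{-\beta U}\,\dd x \ \geqslant\ \int_{\R^d}e^{-\beta L_0|x-x_*|^2/2}\,\dd x \ =\ \po\frac{2\pi}{\beta L_0}\pf^{d/2}\,.\]
On the other hand, for $\beta\geqslant\beta_0$, on the set $\{U\geqslant u\}$ one has $e^{-\beta U}=e^{-(\beta-\beta_0)U}e^{-\beta_0 U}\leqslant e^{-(\beta-\beta_0)u}e^{-\beta_0 U}$, so $\int_{\{U\geqslant u\}}e^{-\beta U}\,\dd x\leqslant e^{-(\beta-\beta_0)u}Z_{\beta_0}$ and hence
\[\pi_\beta\po U\geqslant u\pf \ \leqslant\ \frac{Z_{\beta_0}}{Z_\beta}\,e^{-(\beta-\beta_0)u} \ \leqslant\ Z_{\beta_0}\po\frac{\beta L_0}{2\pi}\pf^{d/2}e^{-(\beta-\beta_0)u}\,.\]
Taking $\beta=\beta_n=\beta_0+\ln(1+n)/\hat c$ gives $e^{-(\beta_n-\beta_0)u}=(1+n)^{-u/\hat c}$, and since $\beta_n^{d/2}=\mathcal O((\ln(2+n))^{d/2})$ is dominated by $n^{\delta}$ for every $\delta>0$ and all $n\geqslant 1$, we get $\pi_{\beta_n}(U\geqslant u)\leqslant C'' n^{-u/\hat c+\delta}$ with $C''$ independent of $u$ and $n$. (This reproves, up to the polylogarithmic factor, the estimate \cite[Lemma 3]{M17} quoted above, which is precisely why the $n^{\delta}$ room appears in the statement.)

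Combining the two estimates and setting $C=\max(1,C',C'')$ yields the corollary. The only mildly delicate point — and the one I expect to require the most care — is ensuring that the constant $C$ is \emph{uniform in $u$}: this is achieved by using precisely the initial inverse temperature $\beta_0$ as the reference temperature in the Laplace-type splitting above, which makes the prefactor $u$-independent (the factors $e^{-\beta_0 u}$ and $e^{+\beta_0 u}$ cancel), while the residual $\beta_n^{d/2}$ is absorbed into the $n^{\delta}$ slack. Everything else is routine once Proposition~\ref{prop:recuit} is available.
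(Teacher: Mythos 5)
Your proof is correct and follows essentially the same route as the paper: Pinsker's inequality combined with Proposition~\ref{prop:recuit} controls the discrepancy between $\nu_n$ and $\mu_{\beta_n}$ (first term), and a low-temperature tail estimate for $\pi_{\beta_n}(U\geqslant u)$ gives the second term. The only difference is that where the paper simply invokes \cite[Lemma 3]{M17} for the bound $\mathbb P_{\pi_\beta}(U\geqslant u)\leqslant Ce^{-\beta(u-\delta)}$, you prove the tail estimate directly (Laplace-type lower bound on $Z_\beta$ via the Hessian bound, plus splitting off the reference temperature $\beta_0$), which is a self-contained and slightly stronger version since your constant is uniform in $u$.
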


The rest of this section is devoted to the proof of Proposition~\ref{prop:recuit}. The main point is to quantify the dependency in $\beta$ of the log-Sobolev constant of $\pi_\beta$, as $\beta\rightarrow \infty$ (i.e. in the low temperature regime). This behaviour is known, see e.g. \cite{MenzSchlichting}. The work \cite{MenzSchlichting} is very accurate since it gives the correct subexponential prefactor of the log-Sobolev constant, but the cost of this accuracy is some assumptions of non-degeneracy on $U$ (which has to be a Morse function and has some constraints on its  local minimizers and saddle points). Since the subexponential prefactor is negligible in the analysis of the simulated annealing algorithm, for completeness and for the convenience of the reader we recall   some known arguments to get a slightly rougher estimate without any other condition than Assumption~\ref{hyp:recuit}.

\begin{proposition}\label{prop:LSIrecuit}
Under Assumption~\ref{hyp:recuit}, for all $\beta_0>0$, there exist $C>0$ such that for all $\beta\geqslant \beta_0$, $\pi_\beta$ satisfies a LSI($C_{LS}(\beta)$) with a constant
\[C_{LS}(\beta) \leqslant C \beta^{5d-1} e^{\beta c_*}\,.\]
\end{proposition}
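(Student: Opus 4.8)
The plan is to combine a Poincaré inequality for $\pi_\beta$ with the bounded-perturbation / Lyapunov criterion of Theorem~\ref{thmCGW-MS}, tracking the $\beta$-dependence at each step. First I would localize the problem: since $U\geqslant \frac1{C_0}|x|^2 - C_0$ and $|\na^2U|\leqslant L_0$, outside a large ball $\mathcal B(0,R_0)$ (with $R_0$ independent of $\beta$) the potential $\beta U$ is ``effectively strongly convex at infinity'' in the sense that $\na U(x)\cdot x\geqslant \frac1{2C_0}|x|^2$ there, and $\na^2(\beta U)\geqslant -\beta L_0$ everywhere. This is exactly the input for the Lyapunov function $W(x)=e^{\alpha\beta|x|^2/2}$ used before Theorem~\ref{thmCGW-MS}: with $\alpha$ a small constant one gets a drift condition $-\na(\beta U)\cdot\na W+\Delta W\leqslant(-\lambda|x|^2+b)W$ with $\lambda$ of order $\beta^2$ and $b$ of order $\beta(R+d)$, and $K=\beta L_0$. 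Feeding these into the explicit bound of Theorem~\ref{thmCGW-MS} (and using $m_2=\int|x|^2\pi_\beta\leqslant C$ uniformly in $\beta\geqslant\beta_0$, which follows from the quadratic growth of $U$) shows that $C_{LS}(\beta)$ is bounded by a constant times $\beta^{c}\,C_P(\beta)$ for some fixed power $c$ — so the whole exponential growth in $\beta$ must come from the Poincaré constant $C_P(\beta)$.

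The second and main step is thus to bound $C_P(\beta)$. The standard route (going back to Holley--Kusuoka--Stroock and refined in \cite{MenzSchlichting}) is a decomposition over the ``energy landscape at level $c_*$'': cover a large ball containing all the wells by finitely many sets $\Omega_i$ on each of which $U$ is close to convex or at least has a unique basin, prove a local Poincaré inequality on each $\Omega_i$ whose constant is at most polynomial in $\beta$ (here I would invoke a Holley--Stroock perturbation argument on $\Omega_i$: $U$ restricted to $\Omega_i$ is a bounded perturbation — bounded oscillation of order $1$, hence of order $\beta$ after multiplication — of a convex function, giving a local constant $\leqslant e^{O(\beta)\cdot\mathrm{osc}_{\Omega_i}U}$; to get the sharp exponent $c_*$ one must instead be careful and only pay $e^{\beta c_*}$ by routing mass through the lowest saddle), and then glue the local inequalities with a weighted Cheeger/partition estimate whose cost is governed by $\pi_\beta(\Omega_j)/\pi_\beta(\text{bottleneck between }\Omega_i,\Omega_j)\asymp e^{\beta(\sup_{\Gamma}U - U(\text{min}_j))}$, the worst such exponent being exactly the critical depth $c_*$ from \eqref{eq:c*}. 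Polynomial-in-$\beta$ factors arise from Laplace-type asymptotics of the Gaussian-like integrals near minima and saddles (each contributing a factor $\beta^{\pm d/2}$ or so), and from the number and geometry of the wells; collecting them crudely gives the stated $\beta^{5d-1}$ — any fixed polynomial power suffices for the application, so I would not optimize it.

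The third step is to account for the contribution from infinity: outside the ball $\mathcal B(0,R_0)$ one has a genuine (uniform in $\beta$, by strong convexity at infinity and Bakry--Émery) local Poincaré inequality with constant $O(1)$, and since $\pi_\beta(\R^d\setminus\mathcal B(0,R_0))$ is exponentially small this region contributes only a bounded factor to the gluing. Combining the two regions via the same decomposition lemma yields $C_P(\beta)\leqslant C\beta^{q}e^{\beta c_*}$ for a fixed $q$, and then Theorem~\ref{thmCGW-MS} upgrades this to $C_{LS}(\beta)\leqslant C\beta^{5d-1}e^{\beta c_*}$ after absorbing the extra polynomial factors from $\lambda,b,K,m_2$. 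I expect the main obstacle to be the bookkeeping in the second step — specifically, showing that the exponential rate in the Poincaré (equivalently log-Sobolev) constant is \emph{exactly} $c_*$ and not something larger: the naive Holley--Stroock oscillation bound overshoots, so one needs the careful well-by-well capacity argument (as in \cite{MenzSchlichting} but without the Morse assumptions) where mass is transported between wells through the optimal saddle configuration realizing the infimum in \eqref{eq:c*}. All other estimates (the Lyapunov computation, the Laplace asymptotics giving polynomial prefactors, the uniform second-moment bound) are routine given Assumption~\ref{hyp:recuit}.
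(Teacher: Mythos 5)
Your overall skeleton (a Lyapunov condition plus Theorem~\ref{thmCGW-MS} to upgrade a Poincar\'e inequality to a LSI at polynomial cost in $\beta$, so that the whole exponential factor must come from $C_P(\beta)$) is the same as the paper's. But the proposal has a genuine gap exactly where the content of the proposition lies: the bound $C_P(\beta)\leqslant C\beta^{q}e^{\beta c_*}$ with the \emph{sharp} exponent $c_*$ under Assumption~\ref{hyp:recuit} alone (no Morse or nondegeneracy hypotheses). You defer this to ``a careful well-by-well capacity argument as in \cite{MenzSchlichting} but without the Morse assumptions'', which you yourself flag as the main obstacle and only sketch: the local Holley--Stroock oscillation bounds you invoke on each piece $\Omega_i$ give $e^{\beta\,\mathrm{osc}_{\Omega_i}U}$, and the step that replaces this by $e^{\beta c_*}$ (routing through the lowest saddle, without any structure on the critical points) is precisely the theorem to be proved, not a bookkeeping issue. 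The paper does not redo this analysis: it first reduces the global Poincar\'e inequality to a Poincar\'e inequality for $\pi_\beta$ restricted to a fixed ball via the Lyapunov condition and \cite[Theorem 1.4]{BBCG} (choosing the ball so large that the restricted potential has the same critical depth $c_*$ as in \eqref{eq:c*}), and then quotes \cite[Theorem 1.14 and Remark 1.16]{Holley} (Holley--Stroock), which gives exactly $C_P(\beta,R)\leqslant C\beta^{5d-2}e^{\beta c_*}$ for restricted measures with no Morse assumption; this citation is also where the $\beta^{5d-1}$ prefactor in the statement comes from. So the fix is not more decomposition work but the reduction-to-a-ball step followed by this reference; as written, your proof is incomplete at its central estimate.

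Two smaller points. First, your Lyapunov computation starts from ``$\na U(x)\cdot x\geqslant \tfrac1{2C_0}|x|^2$ outside a large ball'', which does not follow from quadratic growth plus $|\na^2U|\leqslant L_0$ (the bound $\na U(x)\cdot x\geqslant \tfrac{U(x)-U(x/2)}{|x|}\,|x| - \tfrac{L_0}{8}|x|^2$ can be useless when $L_0C_0$ is large); the paper instead takes $W=e^{\beta_0U/2}$ with the \emph{fixed} exponent $\beta_0$ and uses the gradient lower bound $|\na U(x)|\geqslant |x|/C_0-C_0$ from \eqref{eq:cond_Lyap_LSI}, which makes the drift condition uniform in $\beta\geqslant\beta_0$ and keeps $\lambda,b$ $\beta$-independent (only $K=\beta L_0$ grows), giving cleanly $C_{LS}(\beta)\leqslant C\beta(1+C_P(\beta))$. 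Second, your treatment of the region at infinity by Bakry--\'Emery plus gluing is an alternative to the paper's use of \cite{BBCG}, but it again relies on a pointwise convexity at infinity that Assumption~\ref{hyp:recuit} does not grant; the Lyapunov route avoids this.
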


\begin{proof}
 Consider the generator 
\[L_\beta f(x) = -\beta \na U(x) \cdot \na f(x) + \Delta f(x)\]
of the overdamped Langevin process reversible with respect to $\pi_\beta(x) \propto e^{-\beta U}$. For $x\in\R^d$, set $W(x) = e^{\beta_0U(x)/2}$. Then, using \eqref{eq:cond_Lyap_LSI},
\begin{equation}\label{eq:WLSI}
\frac{L_\beta W(x)}{W(x)} = - \po \beta-\frac{\beta_0}2\pf \beta_0  |\na U(x)|^2  +  \frac{\beta_0^2}{4}\Delta U(x) \leqslant - \frac{1}{C_0'}|x|^2 +\1_{|x|\leqslant C_0'}C_0'\,,
\end{equation}
for some $C_0'$ uniformly in $\beta\geqslant \beta_0$. From \cite{CattiauxGuillinWu}, this ensures that $\pi_\beta$ satisfies a log-Sobolev and a Poincaré inequality. Denote by $C_{LS}(\beta)$ and $C_P(\beta)$ the corresponding optimal constant. Since $W,L_0$ and $C_0'$ are independent from $\beta$, \cite[Lemma 3.19]{MenzSchlichting} shows that the second moment of $\pi_\beta$ is bounded uniformly in $\beta\geqslant \beta_0$ and then, from Theorem~\ref{thmCGW-MS},
\[C_{LS}(\beta)  \leqslant C \beta \po 1 + C_{P}(\beta)\pf \]
for some $C>0$ independent from $\beta$. Using again that $W$ and $C_0'$  are independent from $\beta$ in \eqref{eq:WLSI},  \cite[Theorem 1.4]{BBCG} yields, for any $R\geqslant C_0'$,
\[C_P(\beta) \leqslant C'(1+C_P(\beta,R)) \]
where $C'>0$ is independent from $\beta$ and $C_{P}(\beta,R)$ is the Poincaré constant of the restriction of $\pi_\beta$ to the ball centered at the origin with radius $R$. Take $R$ large enough so that $\{x\in \R^d \text{ s.t. } U(x) \leqslant
  \sup\{U(y),\ y\in\R^d,\na U(y) = 0\} + c_* +1\}$ is included in this ball. In particular, a continuous path $\gamma:[0,1]\rightarrow \R^d$ starting at a minimum of $U$ and with $\sup \gamma - \gamma(0) < c_*+1$ never leaves this ball, which means the critical depth of $U$ and of its restriction on this ball are the same. From  \cite[Theorem 1.14 and Remark 1.16]{Holley}, there exist $C''>0$ independent from $\beta$ so that 
\[C_P(\beta,R') \leqslant C'' \beta^{5d-2} e^{\beta c_*}\,,\]
which concludes.
\end{proof}

\begin{remark}
The polynomial pre-factor of Proposition~\ref{prop:LSIrecuit} is not sharp but the term $e^{\beta c_*}$ is optimal, as \cite[Theorem 1.14]{Holley} also gives a similar lower bound on the local Poincaré inequality.
\end{remark}

Since we want to apply Theorem~\ref{thm:dissipation_modified} with a potential $\beta U$, hence with $L= \beta L_0$, we consider 
\[\mathcal L_\beta\po \nu \pf \ = \ \Ent(\nu|\mu_\beta) + a(\beta)\int_{\R^{2d}} \frac{|\sqrt{\beta L_0} \na_x h + \na_v h|^2 }{h}\dd \mu_\beta \quad\text{with}\quad a(\beta)= \frac{\gamma \sqrt{\beta L_0}}{27 \beta L_0+12\gamma^2} \,.\]
The dependency in $\beta$ of this modified entropy is addressed as follows.

\begin{proposition}\label{prop:recuitLbetabeta'}
Under Assumption~\ref{hyp:recuit}, for all $\beta_0>0$, there exist $C>0$ such that for all $\beta'>\beta\geqslant \beta_0$ and all $\nu\in\mathcal P(\R^d)$,
\[\mathcal L_{\beta'}(\nu) \leqslant \po 1+ \varepsilon \pf \mathcal L_\beta(\nu) + \varepsilon \qquad \text{with}\qquad \varepsilon = C\beta (\beta'-\beta) \po 1+ (\beta'-\beta)^2\pf \,.\]
\end{proposition}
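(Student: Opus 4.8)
The plan is to compare $\mathcal L_{\beta'}(\nu)$ and $\mathcal L_\beta(\nu)$ term by term, controlling separately (i) the change of the reference measure $\mu_\beta \to \mu_{\beta'}$ inside the relative entropy, (ii) the change of the weight $a(\beta)$ and of the scaling factor $\sqrt{\beta L_0}$ in the Fisher-information part, and (iii) the change of the reference measure in the Fisher-information part. The basic tool for (i) is the chain rule for relative entropy: writing $h_\beta = \dd \nu/\dd\mu_\beta$ and $h_{\beta'}=\dd\nu/\dd\mu_{\beta'}$, one has $h_{\beta'} = h_\beta \, \dd\mu_\beta/\dd\mu_{\beta'}$ and hence
\[
\Ent(\nu|\mu_{\beta'}) = \Ent(\nu|\mu_\beta) + \int_{\R^{2d}} \ln\!\po \frac{\dd\mu_\beta}{\dd\mu_{\beta'}}\pf \dd\nu\,.
\]
Since $\dd\mu_\beta/\dd\mu_{\beta'}(x,v) = e^{(\beta'-\beta)U(x)}\,Z_{\beta'}/Z_\beta$ with $Z_\beta = \int e^{-\beta U}$, the log-ratio is $(\beta'-\beta)U(x) + \ln(Z_{\beta'}/Z_\beta)$. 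The term $\ln(Z_{\beta'}/Z_\beta)$ is $-\int_\beta^{\beta'}\mathbb E_{\pi_s}(U)\,\dd s \le 0$, so it only helps; the term $(\beta'-\beta)\int U \,\dd\nu$ is the one to bound. Here one uses that $\mu_\beta$ (or $\mu_{\beta'}$) satisfies a log-Sobolev inequality by Proposition~\ref{prop:LSIrecuit}, together with the Lipschitz/quadratic-growth bounds \eqref{eq:cond_Lyap_LSI} on $U$, to control $\int U\,\dd\nu$ by the entropy $\Ent(\nu|\mu_{\beta})$ plus a constant: a standard argument (e.g.\ via the Donsker–Varadhan variational formula $\int U\,\dd\nu \le \Ent(\nu|\mu_\beta) + \ln\int e^{U}\dd\mu_\beta$, noting $e^{U}$ is $\mu_\beta$-integrable for $\beta\geqslant\beta_0$ large enough after rescaling $U$ by a small constant, or more carefully using that the LSI constant grows only like $C\beta^{5d-1}e^{\beta c_*}$) gives $\int U\,\dd\nu \le c\,\Ent(\nu|\mu_\beta) + c'$, with $c,c'$ depending on $\beta_0$ but controllable. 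After multiplying by $\beta'-\beta$, this produces a contribution bounded by $C(\beta'-\beta)\Ent(\nu|\mu_\beta) + C(\beta'-\beta)$, which is absorbed into the claimed $\varepsilon(1+\mathcal L_\beta(\nu))$ shape. Actually, to get the stated prefactor $C\beta(\beta'-\beta)(1+(\beta'-\beta)^2)$ one must be slightly more careful: a uniform bound $\int U\,\dd\nu \le c\,\mathcal L_\beta(\nu)/\beta + c'$ (with the extra $1/\beta$ from the scaling, since $\mathcal L_\beta$ involves the entropy of $\nu$ with respect to a measure whose log-density is $\beta U$) yields the factor $\beta(\beta'-\beta)$.

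For the Fisher-information part, write $F_\beta(\nu) = \int |\sqrt{\beta L_0}\,\na_x h_\beta + \na_v h_\beta|^2/h_\beta\,\dd\mu_\beta$. One must relate $F_{\beta'}(\nu)$ to $F_\beta(\nu)$. Expanding the square, $F_\beta = \beta L_0 \int|\na_x h_\beta|^2/h_\beta\,\dd\mu_\beta + 2\sqrt{\beta L_0}\int \na_x h_\beta\cdot\na_v h_\beta/h_\beta\,\dd\mu_\beta + \int|\na_v h_\beta|^2/h_\beta\,\dd\mu_\beta$. The change of reference measure affects both the integrand and the density: $\na h_{\beta'} = \na(h_\beta e^{(\beta'-\beta)U})\cdot(\text{const}) $, so $\na_x h_{\beta'}/\sqrt{h_{\beta'}}$ picks up an additional term $(\beta'-\beta)\na U(x)\sqrt{h_{\beta'}}$ relative to $\na_x h_\beta/\sqrt{h_\beta}$ rescaled. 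Here the plan is: express everything in terms of $g=\sqrt{h_\beta}$, use $\na_x(g e^{(\beta'-\beta)U/2}) = e^{(\beta'-\beta)U/2}(\na_x g + \tfrac{\beta'-\beta}{2}g\na U)$, expand, and bound cross terms with Cauchy–Schwarz and Young's inequality, trading a factor $(\beta'-\beta)^2\int|\na U|^2 h_{\beta'}\,\dd\mu_{\beta'} = (\beta'-\beta)^2\int|\na U|^2\,\dd\nu$ against the entropy again via LSI and the growth bound $|\na U(x)|^2 \le 2C_0^2|x|^2 + 2C_0^2$. This is where the $(\beta'-\beta)^2$ factor in $\varepsilon$ comes from, and where the $(1+(\beta'-\beta)^2)$ shape originates. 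One also checks that the ratio $a(\beta')\beta'L_0$ versus $a(\beta)\beta L_0$ and $a(\beta')$ versus $a(\beta)$ are both bounded, since $a(\beta) \sim \gamma/(27\sqrt{\beta L_0})$ for large $\beta$, so $a(\beta')/a(\beta) = O(\sqrt{\beta/\beta'}) = O(1)$ and $a(\beta')\beta'L_0/(a(\beta)\beta L_0) = O(\sqrt{\beta'/\beta})$, both $\le 1 + C(\beta'-\beta)/\beta$.

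The main obstacle is the precise bookkeeping in step (ii)–(iii): keeping track of the two $\beta$-dependent normalizations ($a(\beta)$ and $\sqrt{\beta L_0}$) while changing the reference measure in the weighted Fisher term, and extracting exactly the polynomial dependence $C\beta(\beta'-\beta)(1+(\beta'-\beta)^2)$ rather than something worse. The key simplification is that one only needs an upper bound of the stated \emph{form} (with $C$ depending on $\beta_0$ and dimension but not on $\beta,\beta'$), not the optimal constant, so all the LSI-based estimates can be applied crudely; in particular the exponential factor $e^{\beta c_*}$ in $C_{LS}(\beta)$ never enters because at each stage one compares $\nu$ to $\mu_\beta$ (not to $\mu_{\beta'}$ across a large gap), and the gap $\beta'-\beta$ is the small parameter. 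A clean way to organize the whole argument is: first prove the elementary inequality $\mathcal L_{\beta'}(\nu) \le (1+\tfrac{C(\beta'-\beta)}{\beta})\mathcal L_\beta(\nu) + (\beta'-\beta)\big(\int U\,\dd\nu + (\beta'-\beta)^2\int|\na U|^2\,\dd\nu\big)\cdot C$, and then bound the two moment integrals by $C(\mathcal L_\beta(\nu)/\beta + 1)$ using the log-Sobolev inequality for $\mu_\beta$ from Proposition~\ref{prop:LSIrecuit} together with \eqref{eq:cond_Lyap_LSI}; combining and using $\beta\ge\beta_0$ gives the claim.
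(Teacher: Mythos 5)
Your overall decomposition is the same as the paper's: the entropy chain rule $\Ent(\nu|\mu_{\beta'})=\Ent(\nu|\mu_\beta)+(\beta'-\beta)\int U\,\dd\nu+\ln(Z_{\beta'}/Z_\beta)$ with $Z_{\beta'}\leqslant Z_\beta$, the control of the weight ratios $a(\beta')\beta'^{k/2}/(a(\beta)\beta^{k/2})$ by $1+C(\beta'-\beta)$, and the expansion $\na\ln(\nu/\mu_{\beta'})=\na\ln(\nu/\mu_\beta)+(\beta'-\beta)(\na U,0)$ followed by Young's inequality and the growth bound $|\na U(x)|\leqslant C_0|x|+C_0$, reducing everything to the moments $\int U\,\dd\nu$ and $\int|x|^2\,\dd\nu$. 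One small remark: the worry about needing the sharper bound $\int U\,\dd\nu\leqslant c\,\mathcal L_\beta(\nu)/\beta+c'$ to "explain" the factor $\beta(\beta'-\beta)$ is unnecessary — the stated $\varepsilon$ is only an upper bound and the extra factor $\beta$ is slack, so a $\beta$-uniform bound $\int U\,\dd\nu\leqslant C(\Ent(\nu|\mu_\beta)+1)$ is enough.

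The genuine issue is how you close these moment bounds. In your final organization you propose to bound $\int U\,\dd\nu$ and $\int|\na U|^2\,\dd\nu$ "using the log-Sobolev inequality for $\mu_\beta$ from Proposition~\ref{prop:LSIrecuit}". Any route through $C_{LS}(\beta)$ (e.g.\ via Talagrand's $T_2$ inequality to compare second moments of $\nu$ and $\mu_\beta$) imports the constant $C\beta^{5d-1}e^{\beta c_*}$, so the resulting $\varepsilon$ would contain $e^{\beta c_*}$ and not be of the stated form with $C$ independent of $\beta$; your justification that the exponential factor "never enters because at each stage one compares $\nu$ to $\mu_\beta$" is not valid, since $e^{\beta c_*}$ is the LSI constant of $\mu_\beta$ itself, not a cross-temperature penalty. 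This is not cosmetic: in the proof of Proposition~\ref{prop:recuit} one needs $\varepsilon_n\ll\theta_n\sim n^{-c_*/\hat c}$, and an extra factor $e^{\beta_n c_*}\sim n^{c_*/\hat c}$ in $\varepsilon_n$ ruins (or at best severely weakens) that comparison. The correct closing step is the one you mention only in passing: the entropy/exponential-moment duality $uv\leqslant u\ln u-u+e^{v}$ (Donsker--Varadhan), which gives $\kappa\int|x|^2\,\dd\nu\leqslant\Ent(\nu|\mu_\beta)+\int e^{\kappa|x|^2}\,\dd\pi_\beta$ with no LSI constant at all. The missing ingredient — which you only assert as integrability "after rescaling $U$ by a small constant" — is that this exponential moment is bounded \emph{uniformly in} $\beta\geqslant\beta_0$ for some fixed $\kappa>0$; mere integrability for each fixed $\beta$ is not enough. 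The paper obtains the uniform bound from the Lyapunov function $W=e^{\beta_0U/2}$ (invariance of $\pi_\beta$ gives $\int W\,\dd\pi_\beta\leqslant C$ uniformly in $\beta\geqslant\beta_0$) combined with $U(x)\geqslant|x|^2/C_0-C_0$ and the choice $\kappa=\beta_0/(2C_0)$. With that uniform estimate supplied, and the LSI removed from this step, your computation goes through and matches the paper's proof.
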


\begin{proof}
The functions $\beta\rightarrow a(\beta)\beta^{k/2}$ for $k=0,1,2$ are globally Lipschitz on $[\beta_0,\infty)$ and thus there exists $C>0$ such that
\[a(\beta')\int_{\R^{2d}} \frac{|\sqrt{\beta' L_0} \na_x h + \na_v h|^2 }{h}\dd \mu_{\beta'} \leqslant  \po 1 + C(\beta'-\beta)\pf a(\beta)\int_{\R^{2d}} \frac{|\sqrt{\beta L_0} \na_x h + \na_v h|^2 }{h}\dd \mu_{\beta'}\] 
uniformly in $\beta'> \beta\geqslant \beta_0$.

Denoting by $Z_\beta = \int_{\R^d} e^{-\beta U}$ the normalization constant of $\pi_\beta$,
\[\Ent\po \nu | \mu_{\beta'} \pf 
=
\Ent\po \nu | \mu_{\beta} \pf + \po \beta'-\beta\pf\int_{\R^d} U(x)\nu(\dd x,\dd v) + \ln \frac{Z_{\beta'}}{Z_\beta}\,.\] 
Since $U\geqslant 0$, $Z_{\beta'}\leqslant Z_\beta$, and using \eqref{eq:cond_Lyap_LSI} we bound
\[\Ent\po \nu | \mu_{\beta'} \pf\leqslant  \Ent\po \nu | \mu_{\beta} \pf + \po \beta'-\beta\pf\int_{\R^d} \po 2 L_0|x|^2 + C_0'\pf \nu(\dd x,\dd v)  \]
for some $C_0'$ independent from $\beta\geqslant\beta_0$. For a matrix $A$,  using \eqref{eq:cond_Lyap_LSI}  again,
\begin{multline*}
\int_{\R^d} \left|A \na \ln \po \frac{\nu}{\mu_{\beta'}}\pf \right|^2 \nu \ = \  \int_{\R^d} \left|A \na \ln \po \frac{\nu}{\mu_{\beta}}\pf + (\beta'-\beta) A\begin{pmatrix}
\na U \\ 0 \end{pmatrix}  \right|^2 \nu \\
 \leqslant  \po 1 + \beta'-\beta\pf  \int_{\R^d} \left|A \na \ln \po \frac{\nu}{\mu_{\beta}}\pf \right|^2 \nu  + (\beta'-\beta) \po 1 + \beta'-\beta \pf |A|^2 \int_{\R^d} \po 2C_0^2 |x|^2 + C_0^2\pf \nu(\dd x,\dd v)\,.
\end{multline*}
 Applying the inequality $uv \leqslant u\ln u -u + e^{v}$ for all $u\geqslant 0,v\in\R$ to $u=\nu/\pi_\beta$ and $v= \kappa |x|^2/2$ for any $\kappa>0$ yields
 \begin{equation}\label{eq:momententropie}
\kappa \int_{\R^d} |x|^2\nu(\dd x,\dd v) \leqslant  \Ent\po \nu | \mu_{\beta} \pf + \int_{\R^d} \po e^{\kappa |x|^2} - 1\pf \pi_\beta(\dd x)\,. 
 \end{equation}
Recall the Lyapunov function $W(x) = e^{\beta_0 U(x)/2}$ of the previous proof. Using that $\pi_\beta$ is invariant for $L_\beta$ and using \eqref{eq:WLSI},
\[0 = \int_{\R^d} L_\beta W  \pi_\beta \leqslant \int_{\R^d} \po - W(x) + C_0''\pf \pi_\beta(\dd x)\,, \]
for some $C_0''$ uniform in $\beta\geqslant \beta_0$, from which we get that $\int_{\R^d} W \pi_\beta$ is bounded uniformly in $\beta\geqslant \beta_0$. Thanks to the last part of \eqref{eq:cond_Lyap_LSI}, taking $\kappa = \beta_0/(2C_0)$,  we get that $\int_{\R^d} e^{\kappa|x|^2} \pi_\beta(\dd x)$ is bounded uniformly in $\beta\geqslant \beta_0$.

Gathering these different bounds concludes.

\end{proof}

\begin{remark}
In \cite{M17},  where a similar computation is conducted in the case of the Langevin process, the moments $\int_{\R^d} |x|^2 \nu(\dd x,\dd v)$, for $\nu$ the law of the process at time $t$, are controlled by Lyapunov arguments. Using \eqref{eq:momententropie} requires less effort, in particular for HMC.
\end{remark}

\begin{proof}[Proof of Proposition~\ref{prop:recuit}]
In this proof, we denote by $C$ various positive constants, independent from $n$, large enough, which varies along the computations. The condition on $\nu_0$ implies that $\mathcal L_{\beta_0}(\nu_0) <\infty$. We can apply Theorem~\ref{thm:dissipation_modified} and  Propositions~\ref{prop:LSIrecuit} and \ref{prop:recuitLbetabeta'} to get that for all $n\in\N$
\[\mathcal L_{\beta_{n+1}} (\nu_{n+1}) \leqslant \po 1+ \varepsilon_n \pf \mathcal L_{\beta_n}(\nu_{n+1}) + \varepsilon_n\qquad \text{and}\qquad  \mathcal L_{\beta_n}(\nu_{n+1}) \leqslant \frac{1}{1+\theta_n} \mathcal L_{\beta_n}(\nu_{n})\,,\]
with
\[\varepsilon_n = C \beta_n (\beta_{n+1}-\beta_n) \leqslant \frac{C \ln(2+n)}{1+n}\quad\text{and}\quad \theta_n = \frac{t_n}{C \beta_n^{5d-1} e^{\beta_n c_*} } \geqslant \frac{1}{C(1+n)^{c_*/\hat c-\delta}} \,, \]
where we can take $\delta>0$ arbitrarily small (up to changing $C$), and in particular we enforce that $\alpha := 1 -c_*/\hat c -\delta >0$. Since $\varepsilon_n \ll \theta_n \rightarrow 0$ as $n\rightarrow \infty$, then $\ln((1+\varepsilon_n)/(1+\theta_n)) \leqslant  -\theta_n/2$ for $n$ large enough and thus we get, for all $n\geqslant k\geqslant 0$,
\[\prod_{j=k}^n \frac{1+\varepsilon_j}{1+\theta_j } \leqslant C e^{-\sum_{j=k}^n \theta_j/2}\leqslant C e^{ ( k^{\alpha} - n^{\alpha})/C }\,.\]
We have obtained
\[
\Ent(\nu_n|\mu_{\beta_n}) \ \leqslant \ \mathcal L_{\beta_n}(\nu_n) \ \leqslant \ C e^{-n^{\alpha}/C} \mathcal L_{\beta_0}(\nu_0) + C \sum_{k=0}^{n-1} \frac{\ln(2+k)}{k+1} e^{( k^{\alpha} - n^{\alpha})/C}\,. \]
 For the last term, distinguishing whether $k \lessgtr n/2$ and taking  $\delta'<\alpha$ arbitrarily small, we bound
 \[
 \sum_{k=0}^{n-1} \frac{\ln(2+k)}{k+1} e^{( k^{\alpha} - n^{\alpha})/C}  \leqslant  Cn e^{-(1-(1/2)^\alpha )n^{\alpha}/C } + \frac{C}{(n/2)^{\alpha-\delta'}}\sum_{k\geqslant n/2 }^{n-1} (k+1)^{\alpha-1} e^{( k^{\alpha} - n^{\alpha})/C} 
  \leqslant  \frac{C}{n^{\alpha-\delta'}}\,,
 \]
which concludes.
\end{proof}

\subsection{Unadjusted HMC}\label{sec:unadjusted}

In this section, we briefly consider the unadjusted HMC sampler, which is the Markov chain with transition $\hat\P = \D\hat\H$ where
\[\hat H f(z) = f\po \hat \varphi_{K,\delta}(z)\pf\]
with $\hat\varphi_{K,\delta}$ a Verlet scheme of the Hamiltonian dynamics with $K\in\N$ iterations and step-size $\delta>0$, i.e. $\hat \varphi_{K,\delta} = \hat \Phi_\delta^{\circ K}$ where
\[\hat \Phi_\delta(x,v) = \po x + \delta v - \frac{\delta^2}2 \na U(x)\,,\, v-\frac{\delta}2\po \na U(x) + \na U\po x + \delta v - \frac{\delta^2}2 \na U(x) \pf \pf \pf \,,\]
corresponding to the sequence
\begin{align*}
v &\leftarrow v -\frac{\delta}2 \na U(x)\\
x &\leftarrow x + \delta v\\
v &\leftarrow v -\frac{\delta}2 \na U(x).
\end{align*}
Let us recall a basic result of numerical error analysis concerning this chain. For simplicity, we take as granted the following:
\begin{assumption}\label{hyp:momentschema}
For either $p=1$ or $p=2$, there exist $C_0>0$ such that 
 $\int_{\R^{2d}} |z|^{2p} \nu_0 \P^n(\dd z) \leqslant C_0 d^p$ for all $n\in\N$.
\end{assumption}
For $p=1$, thanks to Theorem~\ref{thm:dissipation_modified}, this holds with $C_0$ independent from $d$ provided $\mathcal L(\nu_0)$ and  $\int_{\R^d}|x|^2 \pi(\dd x) $   are of order $d$ and $C_{LS}$ is independent from $d$. Indeed,
\[\int |z|^{2} \nu_0 \P^n \leqslant 2 \int |z|^{2} \mu + 2 \mathcal W_2^2(\nu_0\P^n,\mu) \ \leqslant \  2 \int |z|^{2} \mu + 2 C_{LS} \mathcal L(\nu_0) \]
 where we used Talagrand's inequality and the fact $\mathcal L(\nu_0 \P^n) \leqslant \mathcal L(\nu_0)$. For $p=2$, under suitable assumptions of growth of $U$, it can usually be established via Lyapunov arguments, see e.g. the proofs of \cite[Lemma 30, Proposition 31]{MonmarcheSplitting} or \cite{Toappear}.
 
 \begin{proposition}\label{prop:numerique}
 Under Assumptions~\ref{hyp:main}, \ref{hyp:Frobenius} and \ref{hyp:momentschema}, if $K\delta=t$, there exist $C'>0$ (depending only on $L,L_H$ and $C_0$) such that
 \begin{equation}
 \label{eq:numerique}
  \mathcal W_p (\nu_0\P^n,\nu_0\hat \P^n ) \leqslant C'\delta^{\new{p}} nt e^{C'nt} d^{\new{p/2}}\,,
 \end{equation}
 where $p$ is given by Assumption \ref{hyp:momentschema}.
 \end{proposition}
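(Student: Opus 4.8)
The plan is to compare the idealized chain transition $\P = \D\H$ with the unadjusted one $\hat\P = \D\hat\H$ step by step, using a synchronous (parallel) coupling: at each transition, the two chains use the same Gaussian refreshment variable, so the randomization step $\D$ is an exact $\mathcal W_p$-contraction (indeed a $1$-Lipschitz map on the coupled pair, since $(x,v)\mapsto(x,\eta v+\sqrt{1-\eta^2}g)$ does not increase distances after averaging over $g$), and all the discrepancy comes from the deterministic parts $\H$ versus $\hat\H$. Thus I would first establish a \emph{one-step} estimate of the form
\[
\mathcal W_p\po \lambda \P,\lambda'\hat\P\pf \ \leqslant\ \Lambda\, \mathcal W_p\po \lambda,\lambda'\pf \ +\ \varepsilon_\delta
\]
for suitable laws $\lambda,\lambda'$, where $\Lambda$ is a Lipschitz constant of the Hamiltonian flow map on $\R^{2d}$ (with respect to the coupled pair) coming from the estimates \eqref{eq:lem1_Phi1z-z'}--\eqref{eq:lem1_Phi2z-z'} of Lemma~\ref{lem:JacobPhi}, and $\varepsilon_\delta$ is a local error term bounding $\|\Phi_t(z)-\hat\varphi_{K,\delta}(z)\|$ in $L^p$ along the trajectory of the unadjusted chain. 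Iterating this recursion over $n$ steps and summing the geometric-type series in $\Lambda$ gives the announced $C'\delta^p\, nt\, e^{C'nt}\, d^{p/2}$.

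Concretely, the key steps in order: (1) Show the coupled map $z\mapsto\Phi_t(z)$ is Lipschitz with constant $1+Ct$ for some $C$ depending on $L$ — this is immediate from Lemma~\ref{lem:JacobPhi} after the rescaling $L=1$, since \eqref{eq:lem1_Phi1z-z'}--\eqref{eq:lem1_Phi2z-z'} give $|\Phi_t(z)-\Phi_t(z')|\leqslant(1+Ct^2)|z-z'|$ actually, so $\Lambda\leqslant 1+Ct^2$; similarly $\hat\varphi_{K,\delta}$ is Lipschitz with a comparable constant (a standard Verlet estimate, cf.\ \cite{BouRabeeEberle} or \cite[Lemma 18]{MonmarcheHMCconvexe}). (2) Establish the local strong error $\|\Phi_t(z)-\hat\varphi_{K,\delta}(z)\|\leqslant C\delta^p(1+|z|)e^{Ct}$ in the relevant $L^p$-sense: this is the classical second-order ($p=2$) or first-order ($p=1$) consistency of the Verlet scheme over a time interval of length $t=K\delta$, obtained by Taylor expansion using $\|\na^2 U\|_\infty\leqslant L$ (Assumption~\ref{hyp:main}) and the Lipschitz Hessian $L_H$ (Assumption~\ref{hyp:Frobenius}) for the $\delta^2$ case; the dependence on $|z|$ is controlled because $|\na U(x)|\leqslant L|x|+$const. (3) Combine: starting from the common initial law $\nu_0$ (so $\mathcal W_p(\nu_0,\nu_0)=0$), write
\[
\mathcal W_p(\nu_0\P^{k+1},\nu_0\hat\P^{k+1}) \ \leqslant\ \Lambda\,\mathcal W_p(\nu_0\P^{k},\nu_0\hat\P^{k}) \ +\ C\delta^p e^{Ct}\po 1 + \mathbb E^{1/p}|Z_k|^p\pf
\]
where $Z_k\sim\nu_0\hat\P^k$ (or $\nu_0\P^k$ — either works by triangle inequality and the moment bound); here Assumption~\ref{hyp:momentschema} supplies $\mathbb E|Z_k|^p\leqslant C_0 d^{p/2}$ uniformly in $k$. (4) Unroll the recursion: $\mathcal W_p(\nu_0\P^n,\nu_0\hat\P^n)\leqslant C\delta^p e^{Ct}(1+C_0^{1/p}d^{1/2})\sum_{j=0}^{n-1}\Lambda^j\leqslant C\delta^p e^{Ct}(1+d^{1/2})\,n\,\Lambda^{n}$, and since $\Lambda\leqslant 1+Ct^2\leqslant e^{Ct^2}$ we get $\Lambda^n\leqslant e^{Cnt^2}\leqslant e^{Cnt}$ (using $t\leqslant 1/4$); absorbing the $(1+d^{1/2})$ into $d^{p/2}$ (for $p=1$ this is exactly $d^{1/2}$; for $p=2$, $d^{1/2}\leqslant d$) gives the claimed bound with a new constant $C'$ depending only on $L$, $L_H$, $C_0$.

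The main obstacle — or at least the point requiring the most care — is the local error estimate in step (2) with the \emph{correct} power of $\delta$: getting $\delta^2$ rather than $\delta$ in the $p=2$ case requires exploiting the time-symmetry of the Verlet (leapfrog) integrator, which makes it a second-order method, and this has to be done in a way compatible with the $L^p$ framework and with the linear-in-$|z|$ growth so that the moment bound of Assumption~\ref{hyp:momentschema} closes the argument cleanly. One must also be slightly careful that the Lipschitz constants of both $\Phi_t$ and $\hat\varphi_{K,\delta}$ are controlled uniformly — for $\hat\varphi_{K,\delta}$ this uses $\delta\sqrt{L}$ small, which holds since $\delta\leqslant K\delta=t\leqslant 1/(4\sqrt L)$ — and that the synchronous coupling genuinely makes $\D$ non-expansive for $\mathcal W_p$ with $p\in\{1,2\}$, which is elementary but worth stating. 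Everything else is bookkeeping: a discrete Grönwall / geometric-sum estimate and tracking the dimension dependence through the second-moment bound.
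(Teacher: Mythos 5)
The paper itself omits the proof (pointing to \cite[Section 4.3]{MonmarcheHMCconvexe}), and your strategy --- synchronous coupling, a one-transition comparison splitting the flow-Lipschitz part from the Verlet consistency error, then a discrete Gr\"onwall sum --- is indeed the intended route. However, two quantitative points in your step (2) are wrong as stated, and they matter for the exact form of \eqref{eq:numerique}. First, for $p=2$ the $O(\delta^2)$ error of the Verlet leg involves the third derivative of the exact trajectory, $\dddot v = -\na^3 U(x)[v,v] + \na^2 U(x)\na U(x)$, and under Assumption~\ref{hyp:Frobenius} the first term is only controlled by $L_H|v|^2$: the second-order local error is genuinely \emph{quadratic} in $|z|$, not linear as you claim. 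Your dimension bookkeeping betrays this: you end up with $d^{1/2}$ for $p=2$ and then ``absorb'' it into $d$, whereas the correct accounting takes the $L^2$ norm of a quadratic quantity, $(\mathbb E |Z_k|^4)^{1/2}\lesssim d$, which is precisely why Assumption~\ref{hyp:momentschema} asks for moments of order $2p$ (fourth moments when $p=2$) and why \eqref{eq:numerique} carries $d^{p/2}=d$. The repair is easy (allow $C\delta^2(1+|z|^2)$ growth and use the $2p$-moment bound), but as written your local error estimate fails whenever $L_H>0$ and velocities are unbounded. Relatedly, the moments are assumed only along the \emph{idealized} chain $\nu_0\P^k$, so the recursion must place the consistency error on that marginal (compare $\hat\P$ with $\hat\P$ for the contraction leg, and $\P$ with $\hat\P$ applied to $\nu_0\P^k$); your parenthetical ``either works'' is not justified, since nothing is assumed about the moments of $\nu_0\hat\P^k$, although the alternative you mention is the correct one.

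Second, the global Verlet error over one Hamiltonian leg of length $t=K\delta$ is $O(\delta^p\, t\, e^{Ct})$ --- $K=t/\delta$ local errors of size $O(\delta^{p+1})$ propagated by Gr\"onwall --- and this factor $t$ must be kept. Dropping it, as in your bound $C\delta^p(1+|z|)e^{Ct}$, only gives $C'\delta^p\, n\, e^{C'nt} d^{p/2}$ after summation, i.e.\ $n$ in place of $nt$, which is strictly weaker than \eqref{eq:numerique} and loses a factor $1/t$ exactly in the Langevin regime $t\to 0$, $n\sim s/t$ that the paper emphasizes. Finally, a harmless slip: \eqref{eq:lem1_Phi1z-z'}--\eqref{eq:lem1_Phi2z-z'} have off-diagonal entries of order $t$, so the coupled flow is Lipschitz with constant $1+Ct$, not $1+Ct^2$; this does not hurt you because only $\Lambda^n\leqslant e^{Cnt}$ is needed in the end.
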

 
 The proof is similar to \cite[Section 4.3]{MonmarcheHMCconvexe}, hence omitted. Combining this result with Theorem~\ref{thm:dissipation_modified} and using Talagrand's inequality yields
 \begin{equation}
\label{eq:crude} 
\mathcal W_p (\mu,\nu_0\hat \P^n ) \leqslant C'\delta^{\new{p}} nt e^{C'nt} d^{\new{p/2}} + \po 1+ \frac{3t }{8\max(C_{LS},1)/a +16 }\pf^{-n/2}  \sqrt{C_{LS}\mathcal L(\nu_0)}\,,
 \end{equation}
  with $a=\gamma/[14+8(\gamma+3)^2]$.   For a given tolerance $\varepsilon>0$, we measure the numerical  complexity of the algorithm in terms of number of computation of gradients of $U$ (which is the main part of the numerical cost of the algorithm) by
  \begin{equation}\label{eq:Mepsi}
M_\varepsilon = K\inf\left\{n\in\N,\ \mathcal W_p (\mu,\nu_0\hat \P^n )  \leqslant \varepsilon \new{\mathcal W_2(\pi,\delta_0)}\right\}\,,  
  \end{equation}
where we used the \new{scaling-invariant criterion} advocated by \cite{dalalyan2} \new{(see Equation (5) in Section 3 of \cite{dalalyan2})}  when using Wasserstein distances, and we multiplied by $K$ which is the number of computations of gradients per transition of the chain. \new{In terms of the dimension $d$, we consider that $\mathcal W_2(\pi,\delta_0)$ is of order $\sqrt{d}$ (which is consistent with Assumption~\ref{hyp:momentschema} and, for instance, holds for the i.i.d. case). } Considering the behaviour of $M_\varepsilon$ in terms of small $\varepsilon$ and large $d$ when $C_{LS}$, $\gamma$, $\mathcal L(\nu_0)/d$, $L$, $L_H$ and $C_0$ are fixed (independent from $d$), \new{assuming that $p=2$ in Assumption~\ref{hyp:momentschema}}, we see by taking $nt$ of order $\ln(1/\varepsilon)$ and then $\delta^{-2}$ of order  $d /\varepsilon^{q}$ for some $q>0$ that
\[M_\varepsilon = \mathcal O \po \frac{\sqrt{d}}{\varepsilon^r}\pf   \]
for some $r>0$. The dependency in $\varepsilon$ is really bad, which is related to the fact that the bound \eqref{eq:crude}  is very crude. However, in terms of the dimension, in this context (where the target measure might not be log-concave for instance), we are already able to get with Theorem~\ref{thm:dissipation_modified} coupled with a basic numerical analysis a complexity which scales as $\sqrt{d}$, as e.g. in \cite[Theorem 1.5]{ChenVempala}, \cite[Table 1]{MangoubiSmith}, \cite[Theorem 1.6]{BouRabeeSchuh} or \cite[Table 1]{MonmarcheSplitting} \new{(of course if we remove the scaling in $\mathcal W_2(\pi,\delta_0) \simeq \sqrt{d}$  in \eqref{eq:Mepsi} then the bound obtained here becomes very bad also in $d$. We emphasize that the scaling in \eqref{eq:Mepsi}  is not an artificial way to get a nice result here but, as explained in \cite{dalalyan2}, is in fact the natural relevant criterion to assess the accuracy in terms of Wasserstein distance, which are used to control the estimation of moments, which scale as $\mathcal W_2(\pi,\delta_0)$).}


\new{The point of this section was to see that Theorem~\ref{thm:dissipation_modified}, which concerns the idealized HMC, already yields with  no additional work (given the known error bounds on the Verlet integrator) a result on the practical unadjusted HMC. Of course, more work is now required to get a better dependency in $\varepsilon$ and a bound similar to \eqref{eq:crude}  but uniform in time. This is postpone to a future work \cite{Toappear}. 

\begin{remark}\label{rem:comparaison_avec_couplage}
Besides, as a last comment, notice that, by some aspects, in the non-convex case, the bound \eqref{eq:crude} is possibly already better than the bounds obtained by reflection coupling  methods (in e.g. \cite{BouRabeeSchuh,BouRabeeEberleZimmer,BouRabeeEberle} for HMC with $\eta=0$). Indeed, to fix ideas, consider the low-temperature regime where $\pi \propto e^{-\beta U_0}$, as in Section~\ref{sec:annealing}, so that we focus on the dependency in $\beta$. From Proposition~\ref{prop:LSIrecuit}, up to polynomial terms in $\beta$, we get a complexity of order $e^{\beta c_*}$ where $c_*$ is the critical height of the potential, defined by \eqref{eq:c*}. On the contrary, for instance, under the assumption that $\na U_0$ is $L$-Lipschitz and that $(x-y)\cdot (\na U_0(x)-\na U_0(y)) \geqslant m|x-y|^2$ if $|x-y|\geqslant R$ for some $m,L,R>0$ (i.e. $U_0$ is strongly convex outside a ball of radius of order $R$),  the result of \cite{BouRabeeEberleZimmer} (this is the same for similar works) gives a complexity for the $\mathcal W_1$ distance of order $e^{-3R/(2t)} = e^{-24L\beta R^2}$ (here we take an integration time $t=1/(16 \beta L R)$ to minimize the exponential term in the bound on the contraction rate in \cite{BouRabeeEberleZimmer} for small $\beta$). Since the bound obtained with reflection coupling applies to all potentials $U_0$ with Lipschitz gradient and strongly convex outside a ball, obviously the result cannot be better than the worse convergence rate among all potentials satisfying the conditions for fixed $L,m,R$, and thus this bound has to scale at least like $e^{\beta \tilde c}$ where $\tilde c$ is the worse critical height among all those. Now, for a specific $U_0$, the upper bound $24 LR^2$ might be possibly off (for instance, take a potential $U_0$ in dimension $2$ with the shape of a Mexican hat: then $c_*=0$, i.e. \eqref{eq:crude} gives a polynomial bound in $\beta$).
\end{remark}
 }

\section*{Acknowledgements}

This work has been partially funded by the  French ANR grants EFI (ANR-17-CE40-0030) and SWIDIMS (ANR-20-CE40-0022) and by the European Research Council (ERC) under the Euro-
pean Union’s Horizon 2020 research and innovation program (grant agreement No 810367),
project EMC2. P. Monmarch\'e thanks Alain Durmus for fruitful discussions.

\bibliographystyle{plain}
\bibliography{bibliography/bibliography}

\end{document}